\begin{document}
%%MATT's  "thlist" environment
 \newcounter{thlistctr}
 \newenvironment{thlist}{\
 \begin{list}%
 {\alph{thlistctr}}%
 {\setlength{\labelwidth}{2ex}%
 \setlength{\labelsep}{1ex}%
 \setlength{\leftmargin}{6ex}%
 \renewcommand{\makelabel}[1]{\makebox[\labelwidth][r]{\rm (##1)}}%
 \usecounter{thlistctr}}}%
 {\end{list}}
%END MATT's "thlist"

\thispagestyle{empty}

\newtheorem{Lemma}{\bf LEMMA}[section]
\newtheorem{Theorem}[Lemma]{\bf THEOREM}
\newtheorem{Claim}[Lemma]{\bf CLAIM}
\newtheorem{Corollary}[Lemma]{\bf COROLLARY}
\newtheorem{Proposition}[Lemma]{\bf PROPOSITION}
\newtheorem{Example}[Lemma]{\bf EXAMPLE}
\newtheorem{Fact}[Lemma]{\bf FACT}
\newtheorem{definition}[Lemma]{\bf DEFINITION}
\newtheorem{Notation}[Lemma]{\bf NOTATION}
\newtheorem{remark}[Lemma]{\bf REMARK}

\newcommand{\restrict}{\mbox{$\mid\hspace{-1.1mm}\grave{}$}}
\newcommand{\covers}{\mbox{$>\hspace{-2.0mm}-{}$}}
\newcommand{\covered}{\mbox{$-\hspace{-2.0mm}<{}$}}
\newcommand{\notcover}{\mbox{$>\hspace{-2.0mm}\not -{}$}}

\newcommand{\boldalpha}{\mbox{\boldmath $\alpha$}}
\newcommand{\boldbeta}{\mbox{\boldmath $\beta$}}
\newcommand{\boldgamma}{\mbox{\boldmath $\gamma$}}
\newcommand{\boldxi}{\mbox{\boldmath $\xi$}}
\newcommand{\boldlambda}{\mbox{\boldmath $\lambda$}}
\newcommand{\boldmu}{\mbox{\boldmath $\mu$}}

\newcommand{\barzero}{\bar{0}}

\newcommand{\sfq}{{\sf q}}
\newcommand{\sfe}{{\sf e}}
\newcommand{\sfk}{{\sf k}}
\newcommand{\sfr}{{\sf r}}
\newcommand{\sfc}{{\sf c}}
\newcommand{\restr}{\negmedspace\upharpoonright\negmedspace}

\title[Dually Quasi-De Morgan Semi-Heyting Algebras]{JI-distributive, Dually 
Quasi-De Morgan 
Semi-Heyting and Heyting Algebras}              
\dedicatory{Dedicated to Professor P.N. Shivakumar\\ 
A Great Humanitarian who changed the course of my life}
\author{Hanamantagouda P. Sankappanavar}
\keywords{JI-distributive, dually quasi-De Morgan semi-Heyting algebra, 
De Morgan semi-Heyting algebra, De Morgan Heyting algebra, dually Stone semi-Heyting algebra, dually Stone Heyting algebra,
discriminator variety, simple algebra,  
subdirectly irreducible algebra, equational base.} 

\subjclass[2000]{$Primary:03G25, 06D20, 06D15;$  $Secondary:08B26, 08B15$}
\date{\today}

\begin{abstract}
The variety $\mathbf{DQD}$ of semi-Heyting algebras  
with a weak negation, called 
{\it dually quasi-De Morgan operation},
and several of its subvarieties 
were investigated in the series ~\cite{Sa11}, 
~\cite{Sa14}, ~\cite{Sa14a}, and ~\cite{Sa15}.    
In this paper we define and investigate a new subvariety 
$\mathbf {JID}$ of $\mathbf{DQD}$, 
called ``JI-distributive, dually quasi-De Morgan semi-Heyting algebras'', 
defined by the identity:  $x' \lor (y \to z) \approx (x' \lor y) \to (x' \lor z)$, as well as the (closely related) variety $\mathbf{DSt}$ of dually Stone semi-Heyting algebras.   
We first prove that $\mathbf {DSt}$ and $\mathbf {JID}$ are discriminator varieties of level 1 and level 2 (introduced in \cite{Sa11}) respectively.   
Secondly, we give a characterization of subdirectly irreducible algebras of the subvariety $\mathbf {JID_1}$ of level 1 of $\mathbf {JID}$. 
 As a first application of it, we derive that the variety $\mathbf {JID_1}$  
 is the join of the variety $\mathbf {DSt}$  
 and the variety 
of De Morgan Boolean semi-Heyting algebras.  As a second application, 
we give a concrete description of the subdirectly irreducible algebras    
in the subvariety $\mathbf {JIDL_1}$ of $\mathbf {JID_1}$ defined by the linear identity:  $(x \to y) \lor (y \to x) \approx 1,$ 
and deduce that   
 the variety $\mathbf {JIDL_1}$ is the join of the variety $\mathbf {DStHC}$ generated by the dually Stone Heyting chains   
 and the variety generated by the 4-element De Morgan Boolean Heyting algebra.   
 Several applications of this result are also given, including   
  a description of the lattice of subvarieties of $\mathbf {JIDL_1}$,
  equational bases of all subvarieties of $\mathbf {JIDL_1}$, and 
  the amalgamation property of all subvarieties of $\mathbf {DStHC}$.  
\end{abstract}

\maketitle

\thispagestyle{empty}

\section{{\bf Introduction}} \label{SA}

The De Morgan (strong) negation and the pseudocomplement  are two of the fairly well known  negations that generalize the classical negation.  
A common generalization of these two negations led to a new variety of algebras, called ``semi-De Morgan algebras'' and a subvariety called ``(upper) quasi-De Morgan algebras'' in \cite{Sa87a}. 

Recently, it is shown, in \cite{LiGrPa17} and \cite{MaLi17}, that semi-De Morgan algebras form an algebraic semantics for a propositional logic, called ``semi-De Morgan logic'', with a Hilbert style axiomatization and a sequent calculus, and with a display calculus, respectively.

In a different vein, as an abstraction of Heyting algebras, semi-Heyting algebras were introduced in  \cite{Sa08}.  It is well known that Heyting algebras are an algebraic semantics for intuitionistic logic.   Recently, in 2013, Cornjo, in \cite{Co11} (see also \cite{CoVi15} for a simpler axiomatization), has presented a logic called ``semi-intutionistic logic'' having semi-Heyting algebras as its algebraic semantics, and having intuitinistic logic as an extension so that the lattice of intermediate intuitionistic logics is an interval in the lattice of ``intermediate'' semi-intuitionistic logics.

Using the dual version of quasi-De Morgan negation, an expansion of semi-Heyting algebras, called   
``dually quasi-De Morgan semi-Heyting algebras ($\mathbf {DQD}$, for short)'' was defined and investigated in \cite{Sa11}, as a common generalization of De Morgan (or symmetric) Heyting algebras \cite{Sa87} (see also \cite{Mo80}) and dually pseudocomplemented Heyting algebras \cite{Sa85}, the latter is recently shown to be an algebraic semantics for a paraconsistent logic by   
Castiglioni and Biraben in \cite{CaBi13}.  In 1942, Moisil \cite{Mo42} has introduced a propositional logic called ``modal symmetric logic'' and, in 1980, Monteiro \cite{Mo80} has shown that De Morgan Heyting algebras are an algebraic semantics for Moisil's modal symmetric logic.      
It should also be mentioned here that the present authors have proposed recently a new propositional logic, called ``De Morgan semi-Heyting logic'',  in \cite{CoSa17} which has De Morgan semi-Heyting algebras as an algebraic semantics and the modal symmetric logic of Moisil % presented in \cite{Mo42} 
as an extension.   
 
Several new subvarieties of $\mathbf{DQD}$ were studied in  ~\cite{Sa11}, ~\cite{Sa14}, ~\cite{Sa14a}, and ~\cite{Sa15}, including   
the vareity $\mathbf {DStHC}$ generated by the dually Stone Heyting chains (i.e., the expansion of the G\"{o}del variety by the dual Stone operation), the variety $\mathbf {DMB}$ of De Morgan Boolean semi-Heyting algebras 
and, in particular, the variety  
$\mathbf {DMBH}$ generated by the $4$-element De Morgan Boolean Heyting algebra.   
These investigations led us naturally to the problem of equational axiomatization for the join of the variety  $\mathbf {DStHC}$ and the variety $\mathbf {DMBH}$.  Our investigations into this problem led us to the results of the present paper that include a solution to the just mentioned problem.

In this paper we define and investigate a new subvariety 
$\mathbf {JID}$ of $\mathbf{DQD}$, 
called ``JI-distributive, dually quasi-De Morgan semi-Heyting algebras'', 
defined by the identity:  $x' \lor (y \to z) \approx (x' \lor y) \to (x' \lor z)$, as well as the (closely related) 
 variety $\mathbf{DSt}$ of dually Stone semi-Heyting algebras.   
We first prove that $\mathbf {DSt}$ and $\mathbf {JID}$ are discriminator varieties of level 1 and level 2 (see Section 2 for definition) respectively.  
Secondly, we prove that the lattice of subvarieties of $\mathbf {DStHC}$ is an $\omega +1$-chain.
Thirdly, we give a characterization of subdirectly irreducible algebras of the subvariety $\mathbf {JID_1}$ of level 1. 
As a first application of it, we derive that the variety $\mathbf {JID_1}$ 
 is the join of the variety $\mathbf {DSt}$ and the variety $\mathbf{DMB}$.    
As a second application, we give a concrete description of the subdirectly irreducible algebras   
in the subvariety $\mathbf {JIDL_1}$ of $\mathbf {JID_1}$ defined by the linear identity:  $(x \to y) \lor (y \to x) \approx 1,$ and deduce that   
 the variety $\mathbf {JIDL_1}$ is the join of the variety $\mathbf {DStHC}$ generated by the dually Stone Heyting chains and $\mathbf{DMBH}$.  
Other applications include   
  a description of the lattice of subvarieties of $\mathbf {JIDL_1}$,
  equational bases of all subvarieties of $\mathbf {JIDL_1}$, and 
  the amalgamation property of all subvarieties of $\mathbf {DStHC}$.

More explicitly, the paper is organized as follows: 
In Section 2 we recall definitions, notations and results 
from \cite{Sa11}, \cite{Sa14} and \cite{Sa14a} and also prove some new results 
needed in the rest of the paper.  In Section 3, we define the variety $\mathbf {JID}$ of JI-distributive, dually quasi-De Morgan semi-Heyting algebras 
and give some arithmetical properties of $\mathbf {JID}$.   
In particular, we show that $\mathbf {JID}$ satisfies the $\lor$-De Morgan law and  
the level $2$ identity: $(x \land x'^*){^{2\rm({'^*}\rm)}} \approx (x \land x'^*){^{3\rm({'^*}\rm)}}$
-- these two propertes allow us to apply \cite[Corollary 8.2(a)]{Sa11} to deduce that $\mathbf{JID}$ is a discriminator variety.  These properties also play a crucial role in the rest of the paper. 
Section 4 will prove that the variety $\mathbf{DSt}$ is a discriminator variety of level 1.  It will also present some properties of $\mathbf{DSt}$, which, besides being of interest in their own right, will also be useful in the later sections.  It is also proved that the lattice of subvarieties of $\mathbf {DStHC}$ is an $\omega +1$-chain. 
In Section 5, we give a characterization of subdirectly irreducible algebras in the subvariety $\mathbf {JID_1}$ of $\mathbf {JID}$ and deduce that $\mathbf {JID_1}$ is the join of $\mathbf {DSt}$ and the variety of De Morgan Boolean semi-Heyting algebras.   
Several applications of this characterization are given in sections 7 and 8.
We investigate, in Section 6, the variety $\mathbf {JIDL_1}$ of JI-distributive dually quasi-De Morgan linear semi-Heyting algebras of level 1.   
An explicit description of subdirectly irreducible algebras in $\mathbf {JIDL_1}$ is given, and from this description it is deduced that $\mathbf{JIDL_1} =  \mathbf{DStHC}  \lor \mathbf{V(D_2)}$.  
Several applications of this result are given in Section 7.
It is shown that the lattice of subvarieties of $\mathbf {JIDL_1}$ is isomorphic to $\mathbf{1 \oplus [(\omega + 1)} \times \mathbf {2}]$, where $\mathbf {2}$ is the 2-element chain.  
Also, (small) equational bases for all subvarieties of $\mathbf {JIDL_1}$ are given.   
Finally, it is shown that all subvarieties of 
$\mathbf {DStHC}$ have the amalgamation property.

\vspace{1cm}
\section{\bf {Preliminaries}} \label{SB}

In this section we recall some notions and known results needed to make this paper as self-contained as possible.  However, for more basic information, we refer the reader to  \cite{BaDw74}, \cite{BuSa81} and \cite{Ra74}.

An algebra ${\mathbf L}= \langle L, \vee ,\wedge ,\to,0,1 \rangle$
is a {\it semi-Heyting algebra} (\cite{Sa08}) if \\
 $\langle L,\vee ,\wedge ,0,1 \rangle$ is a bounded lattice and ${\mathbf L}$ satisfies:
\begin{enumerate}
\item[{\rm(SH1)}] $x \wedge (x \to y) \approx x \wedge y$,
\item[{\rm(SH2)}] $x \wedge(y  \to z) \approx x \wedge [(x \wedge y) \to (x \wedge z)]$,
\item[{\rm(SH3)}] $x \to x \approx 1$.
\end{enumerate}
Semi-Heyting algebras are distributive and pseudocomplemented,
with $a^* := a \to 0$ as the pseudocomplement of an element $a$.
 
Let ${\mathbf L}$ be a semi-Heyting algebra.   
${\mathbf L}$ is a {\it Heyting algebra} if ${\mathbf L}$ satisfies:
\begin{enumerate}
\item[{\rm(H)}] $(x \wedge y) \to y \approx 1$.
\end{enumerate}
$\mathbf{L}$ is a {\it Boolean semi-Heyting algebra} 
if ${\mathbf L}$ satisfies:
\begin{enumerate}
\item[{\rm(Bo)}] $x \lor x^{*} \approx 1$.
\end{enumerate}
$\mathbf{L}$ is a {\it Boolean Heyting algebra} 
 if ${\mathbf L}$ is a Heyting algebra that satisfies (Bo).

The following definition, taken from \cite{Sa11}, is central to this paper.
\begin{definition}
An algebra ${\mathbf L}= \langle L, \vee ,\wedge ,\to, ', 0,1
\rangle $ is a {\it semi-Heyting algebra with a dual quasi-De Morgan
operation} or {\it dually quasi-De Morgan semi-Heyting algebra}
\rm{(}$\mathbf {DQD}$-algebra, for short\rm{)}  if\\
 $\langle L, \vee ,\wedge ,\to, 0,1 \rangle $ is a semi-Heyting algebra, and
${\mathbf L}$ satisfies:
 \begin{itemize}
    \item[(a)]  $0' \approx 1$ and $1' \approx 0$,
     \item[(b)] 
      $(x \land y)' \approx x' \lor y'$,
    \item[(c)]
    $(x \lor y)''  \approx x'' \lor y''$,     
    \item[(d)]  
   $x'' \leq x$.
\end{itemize}
\noindent 
Let $\mathbf{L}$ be a $\mathbf {DQD}$-algebra.  ${\bf L}$ is a {\it dually pseudocomplemented semi-Heyting algebra} {\rm(}$\mathbf{DPC}$-algebra{\rm)} if ${\bf
L}$ satisfies: 
\begin{itemize}
\item [(e)] $x \lor x' \approx 1$.
\end{itemize}
$\mathbf {L}$ is a {\it dually Stone semi-Heyting algebra}   
  {\rm(}$\mathbf{DSt}$-algebra\rm) if ${\bf L}$ satisfies the dual Stone identity:
\begin{itemize}
\item[(DSt)]  $x' \land x'' \approx 0$.
\end{itemize}
It should be noted that if (DSt) holds in a $\mathbf {DQD}$-algebra ${\mathbf L}$, then (e) holds in ${\mathbf L}$ as well, and hence $'$ is indeed the dual pseudocomplement %{\color{red}(see \cite{Sa87})} 
satisfying the Stone identity, and so 
${\mathbf L}$ is indeed a $\mathbf {DSt}$-algebra. \\
 $\mathbf {L}$ is a {\it De Morgan semi-Heyting algebra}   
  {\rm(}$\mathbf{DM}$-algebra{\rm)} if ${\bf
L}$ satisfies:
\begin{itemize}
\item[(DM)]  $x'' \approx x$.
\end{itemize}
\end{definition}

The varieties of $\mathbf {DQD}$-algebras, $\mathbf{DPC}$-algebras, $\mathbf{DSt}$-algebras,  
 $\mathbf{DM}$-algebras 
 are denoted, respectively, by $\mathbf {DQD}$,
$\mathbf{DPC}$, $\mathbf{DSt}$, and $\mathbf{DM}$.    
If the underlying semi-Heyting algebra of a $\mathbf {DQD}$-algebra is a Heyting
algebra, then we add    
``$\mathbf{H}$'' at the end of the names of the varieties that will be considered in the sequel.
Thus, for example, $\mathbf{DStH}$ denotes the variety of dually Stone Heyting algebras.

The following lemmas are basic to this paper.  The proof of the first lemma is straightforward 
and is left to the reader.

\begin{Lemma} \label{2.2}
Let ${\mathbf L} \in \mathbf{DQD}$ and let $x,y, z \in L$.  Then
\begin{enumerate}[\rm(i)]
 \item  $1'^{*}=1$, and $1 \to x =x$,
\item  $x \leq y$ implies $x' \geq y'$,  
\item $(x \land y)'^{*}=x'^{*} \land y'^{*}$,
\item $ x''' = x'$, \label{v} 
\item $(x \lor y)' = (x'' \lor y)'$, \label{vii}         
\item $x \land [y \lor (x \to z)] = x \land (y \lor z)$, \label{xii} 
\item  $x \land (x \to y)'' \leq y$. \label{xix}  
\end{enumerate}
\end{Lemma}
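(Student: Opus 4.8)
The plan is to verify the seven items one at a time, drawing only on the semi-Heyting axioms (SH1)--(SH3), the fact that the lattice reduct is distributive and pseudocomplemented (with $a^* = a \to 0$), and the four $\mathbf{DQD}$-axioms (a)--(d); several of the later items will reuse the earlier ones, so I would prove them in the stated order.

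For (i) I would compute $1'^{*} = 0^{*} = 0 \to 0 = 1$, using (a) for $1' = 0$ and (SH3) for $0 \to 0 = 1$; and I would get $1 \to x = x$ by instantiating $x := 1$ in (SH1) and cancelling the factor $1 \wedge {}$. Item (ii) is the antitonicity of $'$: from $x \leq y$ we have $x = x \wedge y$, so (b) gives $x' = (x \wedge y)' = x' \vee y'$, whence $y' \leq x'$. For (iii), (b) rewrites $(x \wedge y)'^{*}$ as $(x' \vee y')^{*}$, and the standard pseudocomplement law $(a \vee b)^{*} = a^{*} \wedge b^{*}$ (valid because the reduct is distributive and pseudocomplemented) yields $x'^{*} \wedge y'^{*}$. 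Item (iv) follows by applying the antitone operation $'$ (item (ii)) to the inequality $x'' \leq x$ of axiom (d), giving $x' \leq x'''$, and applying (d) to $x'$ to get $x''' \leq x'$.

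The only item requiring a small idea is (v). Here the plan is to first establish that $x$ and $x''$ give the same value under double negation when joined with $y$. By (c), $(x \vee y)'' = x'' \vee y''$, while $(x'' \vee y)'' = x'''' \vee y''$; and $x'''' = (x''')' = (x')' = x''$ by (iv), so that $(x \vee y)'' = (x'' \vee y)''$. Applying $'$ to both sides of this equality and invoking (iv) in the form $a''' = a'$ converts it into $(x \vee y)' = (x'' \vee y)'$, which is exactly the desired identity.

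Finally, (vi) is pure distributivity together with (SH1): expand $x \wedge [y \vee (x \to z)]$ as $(x \wedge y) \vee (x \wedge (x \to z))$, replace $x \wedge (x \to z)$ by $x \wedge z$ via (SH1), and recontract to $x \wedge (y \vee z)$. Item (vii) is immediate once (d) is applied to $x \to y$: it gives $(x \to y)'' \leq x \to y$, so $x \wedge (x \to y)'' \leq x \wedge (x \to y) = x \wedge y \leq y$, again by (SH1). I expect no genuine obstacle in any part; the only place demanding care is (v), where one should resist attempting the inequality directly and instead route through the double-negation identity and then the triple-negation law (iv).
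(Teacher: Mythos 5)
Your proposal is correct: each of the seven items is verified by a valid argument from (SH1)--(SH3), distributivity, pseudocomplementation, and the axioms (a)--(d), and the paper itself omits the proof of this lemma entirely (``straightforward and left to the reader''), so yours is exactly the kind of routine verification intended. The one step with any content, item (v), you handle cleanly by passing to double negations via (c), reducing $x''''$ to $x''$ with (iv), and then priming both sides and applying the triple-negation law $a'''=a'$.
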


\begin{Lemma} \label{2.3}
Let $L \in \mathbf{DQD}$ and  
$x,y \in L$.  Then
\begin{itemize}
\item[(1)] \label{Lemma_72}     
$(x \lor y)' \leq  x' \to (x \lor y)'$,   
\item[(2)] \label{Lemma_84}  
$[x \lor (y \lor z)']' = (x \lor y')'  \lor (x \lor z')' $,  
\item[(3)] \label{Lemma_3.70}   
$x \land [(x \to y) \lor z] = x \land (y \lor z)$,  
\item[(4)] \label{Lemma_4.62} 
$y \land [x \to (y \land z)] = y \land (x \to  z)$,    
\item[(5)] \label{Lemma_4.75}  
$x \to (y  \land z) \  \geq \ y \land (x \to  z)$,  
\item[(6)] \label{Lemma_4.93} 
$x \   \leq \ y \to (x \land y)$,  
\item[(7)]  
$(x \lor y)' = x' \land [(x  \lor y)' \lor \{x'  \to (x \lor y)'\}'' ] $,  
\item[(8)] $x \leq (x \to y) \to y$.   
\end{itemize}
\end{Lemma}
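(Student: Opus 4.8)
The plan is to organize the eight claims by the tools they need, and to dispose first of those that involve only the semi-Heyting reduct, where the workhorses are (SH1), (SH2) and (SH3). For (8) I would apply (SH2) to $x\land[(x\to y)\to y]$ with the terms $x,\,x\to y,\,y$, obtaining $x\land[(x\land(x\to y))\to(x\land y)]$; then (SH1) gives $x\land(x\to y)=x\land y$, so the bracket becomes $(x\land y)\to(x\land y)=1$ by (SH3) and the whole expression collapses to $x$, which is (8). For (4) I would apply (SH2) to each side: $y\land[x\to(y\land z)]$ and $y\land(x\to z)$ both reduce to $y\land[(y\land x)\to(y\land z)]$, so they agree. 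The inequality (5) is then immediate from (4), since $y\land(x\to z)=y\land[x\to(y\land z)]\le x\to(y\land z)$; and (6) follows from (4) by instantiating its variables $(x,y,z)$ at $(y,x,y)$ and using (SH3), which turns (4) into $x\land[y\to(x\land y)]=x\land(y\to y)=x$. Finally (3) needs only distributivity and (SH1): $x\land[(x\to y)\lor z]=[x\land(x\to y)]\lor[x\land z]=(x\land y)\lor(x\land z)=x\land(y\lor z)$.

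The next two claims, (1) and (7), additionally use antitonicity of $'$ (Lemma~\ref{2.2}(ii)) and Lemma~\ref{2.2}(vii). For (1) I would first extract the semi-Heyting fact that $a\le b$ implies $a\le b\to a$: by (SH2), $a\land(b\to a)=a\land[(a\land b)\to(a\land a)]=a\land(a\to a)=a\land 1=a$, where $a\land b=a$ since $a\le b$ and $a\to a=1$ by (SH3). Applying this with $a:=(x\lor y)'$ and $b:=x'$---legitimate because $x\le x\lor y$ forces $(x\lor y)'\le x'$ by Lemma~\ref{2.2}(ii)---yields (1). For (7), writing $a:=(x\lor y)'$, I would distribute the right-hand side as $x'\land[a\lor(x'\to a)'']=(x'\land a)\lor(x'\land(x'\to a)'')$. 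Here $x'\land a=a$ because $a\le x'$ (Lemma~\ref{2.2}(ii) again), while $x'\land(x'\to a)''\le a$ by Lemma~\ref{2.2}(vii) with $x:=x'$ and $y:=a$; hence the join collapses to $a$, giving (7).

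The one genuinely ``negation'' assertion, and the step I expect to be the main obstacle, is (2), because for $'$ one has a De Morgan law only over $\land$ (axiom (b)) and merely the weakened form (axiom (c)) over $\lor$. My plan is to rewrite the right-hand side using (b) in the form $u'\lor v'=(u\land v)'$: thus $(x\lor y')'\lor(x\lor z')'=[(x\lor y')\land(x\lor z')]'=[x\lor(y'\land z')]'$ after distributing. It then remains to prove $[x\lor(y'\land z')]'=[x\lor(y\lor z)']'$, and the pivotal identity is $(y'\land z')''=(y\lor z)'$. I would establish the latter by computing $(y'\land z')'=y''\lor z''$ from (b) and then $(y''\lor z'')'=(y\lor z)'$ by applying Lemma~\ref{2.2}(v) to each disjunct. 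Since Lemma~\ref{2.2}(v) also gives the ``double-prime under an outer prime'' rule $[x\lor w]'=[x\lor w'']'$, I may replace $y'\land z'$ by $(y'\land z')''=(y\lor z)'$ inside, obtaining $[x\lor(y'\land z')]'=[x\lor(y\lor z)']'$ and hence (2). The delicate point is that, with no full De Morgan law for $'$ over $\lor$ available, every manipulation of a disjunction beneath an outer $'$ must be funnelled through Lemma~\ref{2.2}(v) together with the identity $(y'\land z')''=(y\lor z)'$; keeping track of which disjunct is being double-primed is where care is required.
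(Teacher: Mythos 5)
Your proposal is correct in all eight items; where it genuinely diverges from the paper is in the handling of (2) and (7). For (2) the paper computes left-to-right: it rewrites $[x \lor (y \lor z)']'$ as $[x' \land (y\lor z)]''$ via Lemma 2.2(v) and axiom (b), distributes, and then applies axiom (c) directly to split the double prime across the join, before reassembling with (b) and (v). You work right-to-left: merge the two primed terms with (b) and distributivity into $[x \lor (y' \land z')]'$, then double-prime $y'\land z'$ under the outer prime (valid by Lemma 2.2(v) plus commutativity) and use the pivotal identity $(y'\land z')'' = (y''\lor z'')' = (y\lor z)'$. The two calculations are essentially mirror images; yours never cites axiom (c) explicitly, but (c) is smuggled in through Lemma 2.2(v), whose own proof requires it, so neither route is more economical in hypotheses --- yours is arguably easier to read because the only negation fact quoted is (v), and your closing caution about funnelling every manipulation of a disjunction under an outer prime through (v) is exactly the right diagnosis of where the difficulty lies. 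For (7), the paper expands $(x\lor y)'$ forward using (SH1) together with the absorption $A = A \lor A''$ (from axiom (d)) and then redistributes; you instead distribute the right-hand side and collapse the join using $x' \land (x'\to a)'' \le a$, i.e.\ Lemma 2.2(vii) --- a genuinely different and slightly shorter argument, trading the paper's (SH1)/(d) trick for an appeal to item (vii). The remaining items match: the paper leaves (1), (3), (4) to the reader and your verifications via (SH1), (SH2) and distributivity are the intended ones; your derivation of (6) directly from (4), rather than from (5) as in the paper, is an immaterial variation since (5) is itself an instant consequence of (4).
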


\begin{proof} 
\ \\
 (1) is straightforward to verify since $(x \lor y)' \leq x'$.  \\
 
\noindent (2):  
$\begin{aligned} [t]
[(x \lor (y \lor z)']' 
& = [x'' \lor (y \lor z)']'  &\text{by Lemma \ref{2.2} (\ref{vii})} \\      
&= [x' \land (y \lor z)]''  \\
&=[(x' \land y) \lor (x' \land z)]''\\
&= (x' \land y)'' \lor  (x' \land z)''\\
&= (x'' \lor y')' \lor (x'' \lor z' )'\\
&= (x \lor y')' \lor (x \lor z' )' &\text{by Lemma \ref{2.2} (\ref{vii})}\\  
\end{aligned}$

\noindent(3) and (4) are easy to verify.\\

\noindent (5):  
$\begin{aligned} [t]
[x \to (y  \land z)] \land y \land (x \to  z)
&= y \land [x  \to (y \land z)] \land  (x \to  z)\\
&= y  \land  (x \to  z) \text{   by (4)}. 
\end{aligned}$\\

\noindent (6): 
$x = x  \land (y \to y)
\leq y \to  (x \land y)$ by (5). \\

\noindent (7): 
$\begin{alignedat}[t]{2}
(x \lor y)'
& = x' \land (x \lor y)' \\
&= x' \land [x' \to (x \lor y)'] \\
&= x' \land [\{x' \to (x \lor y)'\} \lor  \{x' \to (x \lor y)'\}''] \\  
&= [x' \land (x \lor y)'] \lor [x' \land \{x' \to (x \lor y)' \}''] \\
&= x' \land [(x \lor y)' \lor  \{x'  \to (x \lor y)' \}''],  
\end{alignedat}$.\\

\noindent (8): 
 $x  \land  [(x \to y) \to y] = x \land [\{x \land (x \to y)\} \to (x \land y)] = x \land [(x \land y) \to (x \land y)] =x \land 1 =x $, 
completing the proof.
\end{proof}

The following three $4$-element algebras, called $\mathbf{D_1}$, $\mathbf{D_2}$, and $\mathbf{D_3}$ (following the notation of \cite{Sa11}), in $\mathbf{DQD}$, play an important role in the sequel.
All three of them have the Boolean lattice reduct 
with the universe $\{0,a,b,1\}$, where $b$ is the Boolean complement of $a$,    
and the operation $'$ is defined as follows: 
$a' = a, \ b' = b, \ 0'=1, \ 1'=0,$ while the operation $\to$ is defined in Figure 1.  \\
\ \\ \ \\

\setlength{\unitlength}{.7cm} \vspace*{.1cm}        %.9cm  .3cm
\begin{picture}(7,4)

\put(.4,2.5){$\bf D_1$ \ :}

\put(7.6,2.5){$\bf D_2$ \ :}

%D_1
\put(1.5,2.4){\begin{tabular}{c|cccc}                          %2.5
 $\to$ \ & \ 0 \ & \ $1$ \ & \ $a$\ &\ $b$ \\ \hline
 % & & & &\\
  0 & 1 & 0 & $b$ & $a$ \\
%  & & & &\\
  $1$ & 0 & 1 & $a$ & $b$ \\
 % & & & &\\
  $a$ & $b$ & $a$ & 1 &0\\
%  & & & &\\
  $b$ & $a$ & $b$ & 0 & 1\\
\end{tabular}
}
%D_2
\put(8.6,2.4){\begin{tabular}{c|cccc}
 $\to$ \ & \ 0 \ & \ $1$ \ & \ $a$\ &\ $b$ \\ \hline
 % & & & &\\
  0 & 1 & 1 & 1 & 1 \\
%  & & & &\\
  $1$ & 0 & 1 & $a$ & $b$ \\
 % & & & &\\
  a & $b$ & $1$ & 1 & $b$ \\
 % & & & &\\
  $b$ & $a$ & 1 & $a$ & 1\\
\end{tabular}
}
                          
 \end{picture}  
\medskip
%D_3
%\vspace{1cm}
\ \\ \ \\
\setlength{\unitlength}{.7cm} \vspace*{.1cm}    %1.5cm

 \begin{picture}(7, 3) \\

 \put(1.3,2.0){$\bf D_3$ \ :}                   %2.2
                            
\put(2.6,2.4){\begin{tabular}{c|cccc}                  %-7
 $\to$ \ & \ 0 \ & \ $1$ \ & \ $a$\ &\ $b$ \\ \hline
 % & & & &\\
  0 & 1 & $a$ & 1 & $a$ \\
 % & & & &\\
  $1$ & 0 & 1 & $a$ & $b$ \\
%  & & & &\\
  $a$ & $b$ & $a$ & 1 &0\\
%  & & & &\\
  $b$ & $a$ & 1 & $a$ & 1\\
\end{tabular}
}
\vspace{.5cm}
\put(6,0.1){Figure 1}    %-9.7
\end{picture}

\vspace{.7cm}

Let $\mathbf {DQB}$ and $\mathbf {DMB}$ denote respectively the subvarieties of $\mathbf {DQD}$  and $\mathbf {DM}$ defined by (Bo).  
We also note $\mathbf {DQBH}$ and $\mathbf {DMBH}$ denote respectively the subvariety of $\mathbf {DQB}$ and $\mathbf {DMB}$ defined by (H). 
$\mathbf {V(K)}$ denotes the variety generated by the class K of algebras in $\mathbf {DQD}$. 
The following proposition is proved in  (\cite{Sa11}) and is needed later in this paper.
\begin{Proposition} \label{Prop_DQB}         
\begin{thlist}  
\item[a]  $\mathbf{DQB} = \mathbf{DMB} =  \mathbf{V(\{D_1, D_2, D_3\}})$, 
\item[b] $\mathbf{DQBH} = \mathbf{DMBH} =  \mathbf{V( D_2})$.
\end{thlist}  
\end{Proposition}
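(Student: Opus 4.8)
The plan is to split the two chains of equalities into an equational reduction and a subdirect-representation argument. First I would dispose of the easy inclusions. Since (DM) forces $(x\lor y)''=x\lor y=x''\lor y''$ and $x''=x\le x$, we have $\mathbf{DM}\subseteq\mathbf{DQD}$, so $\mathbf{DMB}\subseteq\mathbf{DQB}$ and $\mathbf{DMBH}\subseteq\mathbf{DQBH}$. A direct check of the three tables in Figure 1 (each has a Boolean lattice reduct, $'$ an involution, and satisfies (SH1)--(SH3) together with (a)--(d)) shows $\mathbf{D_1},\mathbf{D_2},\mathbf{D_3}\in\mathbf{DMB}$, with $\mathbf{D_2}$ moreover satisfying (H); hence $\mathbf{V(\{D_1, D_2, D_3\})}\subseteq\mathbf{DMB}$ and $\mathbf{V(D_2)}\subseteq\mathbf{DMBH}$.

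The decisive equational step is to show that (Bo) already forces (DM) in $\mathbf{DQD}$, so that $\mathbf{DQB}=\mathbf{DMB}$ and $\mathbf{DQBH}=\mathbf{DMBH}$. Under (Bo) the bounded distributive pseudocomplemented reduct is Boolean, so $x^{*}$ is the unique complement of $x$. Applying (c) to $x\lor x^{*}=1$ and using $1''=1$ from (a) gives $1=x''\lor(x^{*})''$; by (d), $x''\le x$ and $(x^{*})''\le x^{*}$, so $x''\land(x^{*})''\le x\land x^{*}=0$, whence $(x^{*})''$ is the complement of $x''$, i.e.\ $(x^{*})''=(x'')^{*}$. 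Then $(x^{*})''\le x^{*}$ reads $(x'')^{*}\le x^{*}$, and applying the order-reversing Boolean complement yields $x\le x''$; with (d) this gives $x=x''$, which is (DM).

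It remains to prove $\mathbf{DMB}\subseteq\mathbf{V(\{D_1, D_2, D_3\})}$ and, under (H), $\mathbf{DMBH}\subseteq\mathbf{V(D_2)}$. By Birkhoff's subdirect representation theorem it suffices to identify the subdirectly irreducible members. Here I would use the congruence theory of semi-Heyting algebras: the lattice reduct is Boolean, so the lattice congruences are exactly the ideal congruences, and one analyses which ideals are compatible with $\to$ (through (SH1)--(SH3)) and with the involution $'$. This analysis should show both that a subdirectly irreducible member of $\mathbf{DMB}$ is simple and that it has at most four elements, since an algebra with more than four elements would carry a proper nontrivial compatible ideal. A finite enumeration of the semi-Heyting implications on the one-atom and two-atom Boolean lattices that are compatible with a De Morgan involution then shows that the simple members are precisely $\mathbf{2}$, $\mathbf{D_1}$, $\mathbf{D_2}$, $\mathbf{D_3}$. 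As $\{0,1\}$ is a subalgebra of $\mathbf{D_2}$, we have $\mathbf{2}\in\mathbf{V(D_2)}$, and part (a) follows.

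For (b) I would impose (H) on this list of simples. A direct check gives $0\to b=a\ne 1=(a\land b)\to b$ in both $\mathbf{D_1}$ and $\mathbf{D_3}$, so neither is Heyting, whereas $\mathbf{2}$ and $\mathbf{D_2}$ do satisfy (H). Hence the simple members of $\mathbf{DMBH}$ are $\mathbf{2}$ and $\mathbf{D_2}$, both in $\mathbf{V(D_2)}$, giving $\mathbf{DMBH}=\mathbf{V(D_2)}$; combined with the equational step this yields $\mathbf{DQBH}=\mathbf{DMBH}=\mathbf{V(D_2)}$. The main obstacle is the structural classification of the third paragraph, and specifically the bound $|L|\le 4$ for simple algebras, which rests on the congruence theory of semi-Heyting algebras over a Boolean reduct; once that is in place, the reduction (Bo)$\Rightarrow$(DM) and the finite enumeration are routine.
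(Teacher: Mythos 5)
Your opening reduction is correct and self-contained, and it is worth saying so: in $\mathbf{DQD}$, (Bo) makes the lattice reduct Boolean with $x^{*}$ the unique complement, and your computation $1 = x'' \lor (x^{*})''$, $x''\land (x^{*})''=0$, hence $(x^{*})''=(x'')^{*}$ and then $x\le x''$, genuinely proves that (Bo) implies (DM), so $\mathbf{DQB}=\mathbf{DMB}$ and $\mathbf{DQBH}=\mathbf{DMBH}$. For comparison: the paper itself offers no proof of this proposition at all --- it is quoted from \cite{Sa11}, where the substantive part is obtained from the discriminator-variety machinery (the identity (B), the level-1 identity, and the simplicity condition (SC), i.e.\ essentially the route recorded in this paper as Theorem \ref{Theorem_BDQD}).

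The genuine gap is your third paragraph, which carries the entire content of $\mathbf{DMB}\subseteq\mathbf{V(\{D_1,D_2,D_3\})}$. Both key claims --- that a subdirectly irreducible member of $\mathbf{DMB}$ is simple, and that a simple member has at most four elements --- are announced (``this analysis should show'') but never argued, and they are not routine. Congruences here correspond to filters closed under $x\mapsto x'^{*}$, and to exclude subdirectly irreducible algebras of size greater than four it does not suffice to exhibit one proper nontrivial congruence: a subdirectly irreducible algebra may have many such congruences, as long as they share a monolith. The standard way out is the one the paper leans on: first prove that $\mathbf{DMB}$ satisfies (B) and (Lev 1) so that Theorem \ref{Theorem_BDQD} gives SI $\Leftrightarrow$ simple $\Leftrightarrow$ (SC), and then use (SC) to bound the height of the Boolean reduct; none of this appears in your sketch. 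Your closing enumeration also shows the classification was not actually carried out: the two-element algebra $\bar{\mathbf{2}}$ with $0\to 1=0$ (the subalgebra $\{0,1\}$ of $\mathbf{D_1}$) is a simple member of $\mathbf{DMB}$ missing from your list of simples, and on the four-element Boolean lattice one must also dispose of the swap involution $a'=b,\ b'=a$, which produces $\mathbf{2}\times\mathbf{2}$, not a new simple algebra. (Also, your check that (H) fails in $\mathbf{D_1},\mathbf{D_3}$ is garbled as written: $(a\land b)\to b$ \emph{is} $0\to b=a\ne 1$, which is the correct reason.) None of these issues threatens the truth of the statement, but the structural classification you defer is exactly the nontrivial mathematics of the proposition, so as it stands the proposal is incomplete.
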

The following definition is from \cite{Sa11}.

\begin{definition}\label{5.5}

Let $\mathbf{L} \in \mathbf{DQD}$ and $x \in {\bf L}$.  For $n \in \omega$, we
define $t_n(x)$ recursively as follows:\par

\begin{center}
$x{^{0(}{'{^{*)}}}} := x$;  \\
$x^{(n+1)(}{'{^{*)}}} := (x{^{n(}}{'{^{*)}}})'{^*}$, for $n \geq 0$;\\
$t_0(x) := x$, \\
     $t_{n+1}(x) := t_n(x) \land x^{(n+1){\rm(}}{'^{*{\rm{)}}}}$, for $n \geq 0$.
\end{center}

Let $n \in \omega$. 
The subvariety $\mathbf {DQD_n}$ {\it of level $n$}
of $\mathbf {DQD}$ is defined by the identity:
\begin{equation} 
t_n(x)\approx t_{(n+1)}(x); \tag{lev\ $n$}
\end{equation}
For a subvariety $\mathbf {V}$ of $\mathbf {DQD}$, we let $\mathbf {V_n}:= \mathbf {V} \cap \mathbf {DQD_n}$.  
\end{definition}

Recall from \cite{Sa11} (or \cite{Sa14})  that $\mathbf{BDQDSH}$ is the subvariety of $\mathbf{DQD} \ (=\mathbf{DQDSH})$ defined by the identity:\\
(B) $(x \lor y^*)' \approx x' \land y{^*}'$.  \\
We will abbreviate $\mathbf{BDQDSH}$ by $\mathbf{BDQD}$.

Next theorem, which was proved in \cite[Corollary 4.1]{Sa14}
(which is, in turn, a consequence of
Corollaries 7.6 and 7.7 of  \cite{Sa11}), will play a fundamental role in this paper.

The following ``simplicity' condition'', (SC), is crucial in the rest of the paper.
\begin{itemize}
\item[(SC)] \quad For every $x \in L$, if $x \neq 1$, then $x \land x'{^*} =0$.
\end{itemize}

\begin{Theorem} \cite[Corollary 4.1]{Sa14}  \label{Theorem_BDQD}
Let $n \in \mathbb N$ and $\mathbf{L} \in \mathbf {BDQD_1}$   
with $|L| \geq 2$.  Then the following are equivalent:
\begin{enumerate}
\item[(1)] ${\mathbf L}$ is simple,
\item[(2)]  $\mathbf{L}$ is subdirectly irreducible,
\item[(3)] $\mathbf{L}$ satisfies {\rm (SC)}.
\end{enumerate}
\end{Theorem}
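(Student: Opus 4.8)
The plan is to run the cycle (1) $\Rightarrow$ (2) $\Rightarrow$ (3) $\Rightarrow$ (1). The implication (1) $\Rightarrow$ (2) is immediate: a simple algebra with $|L| \geq 2$ has exactly the two congruences $\Delta$ and $\nabla$, with $\Delta \neq \nabla$, so $\nabla$ is its monolith and $\mathbf{L}$ is subdirectly irreducible. Thus the real content lies in (3) $\Rightarrow$ (1) and (2) $\Rightarrow$ (3).

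For (3) $\Rightarrow$ (1), the key observation is that in any semi-Heyting algebra the term $p(x,y) := (x \to y) \wedge (y \to x)$ detects equality: by (SH1), $x \to y = 1$ forces $x = x \wedge (x \to y) = x \wedge y$, i.e.\ $x \leq y$, so $p(x,y) = 1$ iff $x \leq y$ and $y \leq x$ iff $x = y$. Now let $\theta$ be a congruence with $\theta \neq \Delta$, and choose $(c,d) \in \theta$ with $c \neq d$. Put $a := p(c,d)$; since $(c,d),(d,d) \in \theta$ and $p(d,d) = 1$, we get $(a,1) \in \theta$, and $a \neq 1$ because $a = 1$ would force $c = d$. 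Applying $'$ gives $(a', 0) \in \theta$, and applying $^{*}$ (i.e.\ $x \mapsto x \to 0$) then gives $(a'^{*}, 1) \in \theta$, because $0^{*} = 1$ by (SH3). Meeting the pairs $(a,1)$ and $(a'^{*},1)$ yields $(a \wedge a'^{*}, 1) \in \theta$, and since $a \neq 1$, (SC) gives $a \wedge a'^{*} = 0$. Hence $(0,1) \in \theta$, so $\theta = \nabla$. As $|L| \geq 2$, $\mathbf{L}$ has only the congruences $\Delta$ and $\nabla$, i.e.\ it is simple.

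For (2) $\Rightarrow$ (3) I would argue the contrapositive: assuming (SC) fails, I would produce a nontrivial direct decomposition of $\mathbf{L}$, which rules out subdirect irreducibility. So suppose $a \neq 1$ with $m := a \wedge a'^{*} \neq 0$; note $m \neq 1$ as well, since $m = 1$ would force $a = 1$. The plan is to show, using the defining identity (B), the level~$1$ identity, and the arithmetic of Lemmas \ref{2.2} and \ref{2.3}, that $m$ (or a suitable term in $a$ built from it) is a complemented, hence central, element of $\mathbf{L}$ with $0 < m < 1$; a nontrivial central element splits $\mathbf{L}$ as a product of two strictly smaller algebras, so $\mathbf{L}$ cannot be subdirectly irreducible. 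The main obstacle is precisely this centrality step: one must verify that the level~$1$ and (B) hypotheses force $x \wedge x'^{*}$ to behave Booleanly (in particular $m \vee m^{*} = 1$), which is where the restriction to $\mathbf{BDQD_1}$ is essential and where the computations behind \cite{Sa11} carry the weight. Alternatively, and more in the spirit of the source, one invokes the result of \cite{Sa11} that $\mathbf{BDQD_1}$ is a discriminator variety: then every subdirectly irreducible member is simple, giving (2) $\Rightarrow$ (1) outright, after which only simple $\Rightarrow$ (SC) remains, and that is again the same centrality computation read in contrapositive form.
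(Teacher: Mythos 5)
Your implication (3) $\Rightarrow$ (1) is correct and complete: $p(x,y)=(x\to y)\wedge (y\to x)$ does detect equality via (SH1)/(SH3), and the descent $(a,1)\in\theta$, hence $(a',0)\in\theta$, hence $(a'^{*},1)\in\theta$, hence $(a\wedge a'^{*},1)=(0,1)\in\theta$, hence $\theta=\nabla$, is sound; note that this direction uses only the $\mathbf{DQD}$ axioms and (SC), not (B) or the level~$1$ identity. Together with the trivial (1) $\Rightarrow$ (2), two of the three implications are in place. Bear in mind that the paper itself offers no proof to compare against: it imports the theorem verbatim from \cite[Corollary 4.1]{Sa14}, which in turn rests on Corollaries 7.6 and 7.7 of \cite{Sa11}; so a blind proof has to be self-contained precisely where those sources do the work.

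That is where your proposal has a genuine gap: the direction (2) $\Rightarrow$ (3) --- the only place where the hypotheses (B) and level~$1$ actually matter --- is never established. Your first route reduces it to showing that $m=a\wedge a'^{*}$ (for a witness $a\neq 1$ of the failure of (SC)) is a \emph{central} element, but you explicitly leave that verification open; and centrality here requires more than a lattice complement with $m\vee m^{*}=1$: you must check that the two factor congruences determined by $m$ are compatible with $\to$ and with $'$, which is exactly the congruence-filter arithmetic of \cite{Sa11} (e.g., that for a filter $F$ closed under $x\mapsto x'^{*}$ the relation $\{(x,y): x\wedge f=y\wedge f\ \text{for some}\ f\in F\}$ is a congruence, and that under (B) and level~$1$ the congruence filter generated by $a$ is the principal filter $[a\wedge a'^{*})$, so that simplicity forces $a\wedge a'^{*}=0$). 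Your second route --- invoking the result of \cite{Sa11} that $\mathbf{BDQD_1}$ is a discriminator variety --- essentially assumes the cited theorem whose content you are reproving, and even granting it, it only yields (2) $\Rightarrow$ (1); the implication simple $\Rightarrow$ (SC) still remains, and declaring it to be ``the same centrality computation read in contrapositive form'' is an assertion, not a proof. As it stands, the equivalence of (2) (equivalently (1)) with (3) is not proved.
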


\vspace{1cm}
\section{JI-distributive, dually quasi-De Morgan semi-Heyting algebras}    
 
The identity,  $x  \lor (y \to z) \approx (x \lor y) \to (x \lor z)$, was shown in \cite[Corollary 3.55]{Sa14a} to be a base for the variety generated by $\mathbf{D_2}$, relative to $\mathbf{DQD}$.   Let us refer to this identity as ``strong JI-distributive identity''.  
We now introduce a slightly weaker identity, called ``JI-distributive identity'' (by restricting the first variable to ``primed'' elements).  The variety defined by this identity is the subject of our investigation in the rest of this paper.

\begin{definition}
The subvariety $\mathbf {JID}$ of $\mathbf {DQD}$ is defined by:
\begin{enumerate}
 \item[{\rm(JID)}]  $x'  \lor (y \to z) \approx (x' \lor y) \to (x' \lor z)$ 
  {\rm ({\bf R}estricted distribution of 
 \item[ ] 
\hspace{6.5cm} {\bf J}oin over {\bf{I}}mplication\rm)}.
\end{enumerate}
\end{definition}

Examples of $\mathbf {JID}$ come from an interesting source to which we shall now turn. 
But, first we need some notation.  

A $\mathbf{DQD}$-algebra is a $\mathbf{DQD}$-chain if its lattice reduct is a chain.  
$\mathbf{DQDC}$ [$\mathbf{DPCC}$] denotes the variety generated by the $\mathbf{DQD}$-chains
[$\mathbf{DPC}$-chains]. 
The following lemma provides an important class of examples of $\mathbf {JID}$, which is partly the motivation for our interest in $\mathbf {JID}$. 

\begin{Lemma} \label{DPCSHC_JID}
$\mathbf {DPCC}  \models {\rm (JID)}$.  Hence, $\mathbf {DPCC} \subseteq \mathbf {JID}.$ 
\end{Lemma}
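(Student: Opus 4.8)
The plan is to reduce the verification to the generating chains. Since $(\mathrm{JID})$ is an equational identity and $\mathbf{DPCC}$ is, by definition, the variety generated by the class of $\mathbf{DPC}$-chains (i.e. the $HSP$-closure of that class), and since satisfaction of an identity is preserved under each of the operators $H$, $S$ and $P$, it suffices to check that \emph{every} $\mathbf{DPC}$-chain satisfies $(\mathrm{JID})$. Thus the entire problem reduces to a single linearly ordered $\mathbf{DPC}$-algebra $\mathbf{L}$, and once $(\mathrm{JID})$ holds throughout $\mathbf{DPCC}$ the inclusion $\mathbf{DPCC} \subseteq \mathbf{JID}$ is immediate, because $\mathbf{JID}$ is precisely the subvariety of $\mathbf{DQD}$ axiomatized by $(\mathrm{JID})$ and $\mathbf{DPCC} \subseteq \mathbf{DQD}$.

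The key preliminary step is to pin down the behaviour of the dual pseudocomplement $'$ on a chain. In a $\mathbf{DPC}$-algebra the identity (e), $x \lor x' \approx 1$, holds; on a chain the join is the maximum, so $\max\{x, x'\} = 1$ forces $x = 1$ or $x' = 1$ for each $x$. Combined with $1' = 0$ from (a), this shows that $'$ is \emph{two-valued} on any $\mathbf{DPC}$-chain: $x' = 0$ when $x = 1$, and $x' = 1$ when $x \neq 1$. This collapse of $'$ onto $\{0,1\}$ is the real content of the lemma and what makes $(\mathrm{JID})$ trivial to verify, since the distinguished variable $x'$ occurring on both sides of $(\mathrm{JID})$ can only take the values $0$ or $1$.

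With this in hand I would finish by a two-case analysis on the value of $x'$. If $x = 1$, then $x' = 0$, and using $0 \lor w = w$ both sides of $(\mathrm{JID})$ reduce to $y \to z$. If $x \neq 1$, then $x' = 1$: the left-hand side is $1 \lor (y \to z) = 1$, while on the right $1 \lor y = 1$ and $1 \lor z = 1$, so the right-hand side is $1 \to 1 = 1$ by (SH3) (equivalently by Lemma \ref{2.2}(i)); hence both sides equal $1$. In either case the two sides agree, so $(\mathrm{JID})$ holds in $\mathbf{L}$.

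I do not anticipate a genuine obstacle here. The only point deserving care is the standard justification that verifying an identity on the generators of a variety suffices — that is, the preservation of identities under $H$, $S$, and $P$ — after which the chain computation is entirely routine. The substantive observation, and the reason the lemma is worth isolating, is simply that the dual pseudocomplement degenerates to a two-valued operation on every $\mathbf{DPC}$-chain.
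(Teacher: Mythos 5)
Your proof is correct and follows essentially the same route as the paper: both reduce to a single $\mathbf{DPC}$-chain, use $x \lor x' = 1$ on a chain to conclude that $x' = 1$ whenever $x \neq 1$, and then verify (JID) directly (the paper leaves this last verification as ``routine,'' whereas you spell out the two-case computation). No gaps.
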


\begin{proof}
Let $\mathbf{A}$ be a $\mathbf{DPC}$-chain and let $a \in A \setminus \{1\}$.   
 Since $\mathbf{A}$ is a chain, we have $a' \leq a$ or $a \leq a'$ , from which we get that 
$a \lor a' \leq a$ \ \text{or} \  $a \lor a' \leq a'.$
Since $\mathbf {A}$ is dually pseudocomplemented, we have $a \lor a' = 1$, implying $a'=1$.  Now, 
it is routine to verify (JID) holds in $\mathbf {A}$.   
\end{proof}

For $\mathbf{L}$ a {$\mathbf{DPC}$-chain, it was observed in the proof of the preceding lemma  that the dual pseudocomplement $'$ satisfies: $a' = 1$, if $a \neq 1$,  
and hence $\mathbf{L} \models {\rm(DSt)}$.  Thus, we have the following corollary.  Note that 
$\mathbf{DStC}$ is the variety generated by the dually Stone semi-Heyting chains. 

\begin{Corollary} \label{Cor_DStC}  
$\mathbf{DPCC} = \mathbf{DStC}$.  
\end{Corollary}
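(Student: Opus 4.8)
The plan is to show that the two generating classes of chains are in fact the \emph{same} class of algebras, so that the varieties they generate coincide trivially. Concretely, I would establish that a $\mathbf{DQD}$-chain is a $\mathbf{DPC}$-chain if and only if it is a dually Stone semi-Heyting chain; once this is in hand, $\mathbf{DPCC}$ and $\mathbf{DStC}$ are generated by one and the same class, and hence are equal.

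For the inclusion $\mathbf{DPCC} \subseteq \mathbf{DStC}$, I would reuse the observation already recorded in the proof of Lemma \ref{DPCSHC_JID}: on any $\mathbf{DPC}$-chain $\mathbf{L}$ one has $a' = 1$ whenever $a \neq 1$. From this the dual Stone identity is immediate. Indeed, if $a \neq 1$ then $a'' = 1' = 0$, so $a' \land a'' = 1 \land 0 = 0$; and if $a = 1$ then $a' = 1' = 0$, so again $a' \land a'' = 0$. Thus $\mathbf{L} \models (\mathrm{DSt})$, i.e., every $\mathbf{DPC}$-chain is a dually Stone semi-Heyting chain.

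For the reverse inclusion $\mathbf{DStC} \subseteq \mathbf{DPCC}$, I would invoke the remark made in the Definition section that (DSt) implies (e) in every $\mathbf{DQD}$-algebra; that is, every $\mathbf{DSt}$-algebra is a $\mathbf{DPC}$-algebra. In particular every dually Stone semi-Heyting chain is a $\mathbf{DPC}$-chain.

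Combining the two directions, the class of $\mathbf{DPC}$-chains and the class of dually Stone semi-Heyting chains coincide, whence $\mathbf{DPCC} = \mathbf{V}(\text{DPC-chains}) = \mathbf{V}(\text{DSt-chains}) = \mathbf{DStC}$. There is essentially no obstacle here: the entire content of the statement is the elementary fact that on a chain the dual pseudocomplement forces $a' = 1$ for $a \neq 1$, which renders the Stone identity automatic, together with the previously noted implication that the Stone identity yields dual pseudocomplementation.
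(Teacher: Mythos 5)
Your proposal is correct and follows essentially the same route as the paper: the forward direction uses the observation (from the proof of Lemma \ref{DPCSHC_JID}) that $a'=1$ for $a\neq 1$ on a $\mathbf{DPC}$-chain, forcing (DSt), and the reverse direction uses the remark in the definition section that (DSt) implies (e). The only difference is that you spell out both inclusions explicitly, whereas the paper states only the forward direction and leaves the converse implicit.
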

 
From now on, we use 
$\mathbf{DPCC}$ and $\mathbf{DStC}$ interchangeably.
Observe also that  $\mathbf{D_1}$, $\mathbf{D_2}$, and $\mathbf{D_3}$ are examples of $\mathbf{JID}$-algebras.

In the rest of this section we present several useful arithmetical properties of $\mathbf {JID}$.
Following our convention made earlier, $\mathbf{JIDH}$ denotes the subvariety of $\mathbf {JID}$ defined by the identity (H).

{\bf Throughout this section, we assume that $\mathbf{L} \in \mathbf{JID}$.}  

\begin{Lemma} \label{L1} Let $x,y,z \in \mathbf{L}$.  Then 
\begin{enumerate} 
\item    $x' \to (x' \lor y) = x' \lor (x' \to y)$,  
\item   $x' \to (x' \lor y) =  x' \lor (0 \to y)$,  
\item   $x' \lor (x' \to y) = x' \lor (0 \to y)$; in particular, $x' \lor x'^* = 1$, \label{107}  
\item  $(x' \lor y) \to x'  = x' \lor y^*$,  \label{111}  
\item  $(x' \lor y) \to x' = x' \lor (y \to x')$, \label{103}   
\item  $x' \lor (y \to x') = x' \lor y^*$, \label{112}  
\item $x' \to (x \lor y)'  = x'^* \lor (x \lor y)' $. \label{7} 
\end{enumerate}
\end{Lemma}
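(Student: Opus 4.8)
The plan is to derive every part of this lemma directly from the defining identity (JID), $a' \lor (b \to c) \approx (a' \lor b) \to (a' \lor c)$, by choosing the free variables so that one side of (JID) collapses. The two collapsing facts I would exploit repeatedly are $x' \lor x' = x'$ and $x' \lor 0 = x'$: whenever I take the ``primed'' slot of (JID) to be $x'$ and set either $b$ or $c$ equal to $x'$ or to $0$, the join $(x' \lor b)$ or $(x' \lor c)$ reduces to $x'$, turning the right-hand implication of (JID) into either $x' \to (\cdots)$ or $(\cdots) \to x'$. Each of the seven equalities is exactly such an instance, or an immediate consequence of two of them.

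Concretely, for (1) I would instantiate (JID) at $a' = x'$, $b = x'$, $c = y$, so the right-hand side becomes $(x' \lor x') \to (x' \lor y) = x' \to (x' \lor y)$; for (2) I take $b = 0$, $c = y$, giving $(x' \lor 0) \to (x' \lor y) = x' \to (x' \lor y)$. Part (3) is then immediate by transitivity, since (1) and (2) both equate their left-hand sides to the common value $x' \to (x' \lor y)$; the special case $x' \lor x'^* = 1$ follows by putting $y = 0$ and using (SH3) in the form $0 \to 0 = 1$.

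The next block is handled symmetrically, by sending $c$ (rather than $b$) to a value that collapses the consequent. For (4) I would use $a' = x'$, $b = y$, $c = 0$, so the right-hand side is $(x' \lor y) \to (x' \lor 0) = (x' \lor y) \to x'$ while the left-hand side is $x' \lor (y \to 0) = x' \lor y^*$; for (5) I take $c = x'$ instead, so the consequent $(x' \lor x')$ again collapses to $x'$ and the left-hand side becomes $x' \lor (y \to x')$. Part (6) then drops out by combining (4) and (5), both of which equal $(x' \lor y) \to x'$.

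The only step that is not a verbatim substitution is (7), which I expect to be the main obstacle because the ``primed base'' of (JID) must be chosen to be $(x \lor y)'$ rather than $x'$. The key preliminary observation is that $(x \lor y)' \leq x'$, which follows from Lemma~\ref{2.2}(ii) applied to $x \leq x \lor y$; this is what makes $(x \lor y)' \lor x' = x'$. Instantiating (JID) at $a' = (x \lor y)'$, $b = x'$, $c = 0$ then gives left-hand side $(x \lor y)' \lor (x' \to 0) = (x \lor y)' \lor x'^*$ and right-hand side $\bigl((x \lor y)' \lor x'\bigr) \to \bigl((x \lor y)' \lor 0\bigr) = x' \to (x \lor y)'$, which is precisely (7). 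Once the correct base and the inequality $(x \lor y)' \leq x'$ are identified, the verification is routine.
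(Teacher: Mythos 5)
Your proofs of (1)--(6) coincide with the paper's own: the same instantiations of (JID) exploiting the collapses $x' \lor x' = x'$ and $x' \lor 0 = x'$, with (3) and (6) obtained by transitivity from the two adjacent parts, and your derivation of the ``in particular'' clause of (3) (set $y=0$ and use $0 \to 0 = 1$ from (SH3)) is exactly the intended, though unstated, argument. Where you genuinely diverge is (7). The paper proves (7) by first invoking Lemma~\ref{2.3}(1), i.e.\ $(x \lor y)' \leq x' \to (x \lor y)'$, to write $x' \to (x \lor y)' = (x \lor y)' \lor [x' \to (x \lor y)']$, and then applying the already-established part (6) with the primed base $(x \lor y)'$ and $y$ replaced by $x'$. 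You instead make one further direct instantiation of (JID) at the base $(x \lor y)'$ with $b = x'$, $c = 0$, and collapse $(x \lor y)' \lor x'$ to $x'$ via the antitonicity of $'$ (Lemma~\ref{2.2}(ii)); this gives $(x \lor y)' \lor x'^* = x' \to (x \lor y)'$ in a single step. Both arguments are correct. Your route makes (7) a verbatim substitution instance just like (1)--(6) and avoids Lemma~\ref{2.3}(1) altogether --- in effect you re-derive, at the base $(x \lor y)'$, what the paper extracts from part (6) --- whereas the paper's route illustrates how the earlier parts of the lemma can be reused once that auxiliary inequality is available; the two proofs are of essentially equal length, so the difference is one of organization rather than economy.
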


\begin{proof}
Observe that
$x' \to (x' \lor y) = (x' \lor x') \to (x' \lor y)   
                         = x' \lor (x' \to y)$  by  (JID),
which proves (1).   
To prove (2), again using (JID), we get
$x' \lor (0 \to y)   =  (x' \lor 0) \to (x' \lor y) 
                         = x' \to (x' \lor y)$.                                                     
(3) is immediate from (1) and (2).  
For (4),
$(x' \lor y) \to x'  = (x' \lor y) \to (x' \lor 0)
                          = x' \lor (y \to 0) 
                          = x' \lor y^*$, in view of (JID).
Next,
$(x' \lor y) \to x' 
 = (x' \lor y) \to (x' \lor x')
 =  x' \lor (y \to x')$, proving (5), and
(6) is immediate from (4) and (5). 
For (7),  we have
\begin{eqnarray*}
x' \to (x \lor y)' 
&=& (x \lor y)' \lor [x' \to (x \lor y)']  \text{ by Lemma \ref{2.3} (1)}\\ 
& =& (x \lor y)' \lor x'^* \text{ by (6)}.  
\end{eqnarray*}
\end{proof}

We now prove an important property of the variety $\mathbf{JID}$, namely the $\lor$-De Morgan law.
We denote by $\mathbf{Dms}$ the subvariety of $\mathbf{DQD}$ (called ``dually ms semi-Heyting algebras'') defined by
\begin{center}
  $(x \lor y)' \approx x' \land y'$ \qquad {\rm($\lor$-De Morgan Law)}.
\end{center}

\begin{Theorem} \label{Theorem_105} 
\ $\mathbf{JID} \subseteq \mathbf{Dms}$.  
\end{Theorem}

\begin{proof} Let $x,y  \in \mathbf {L}$.  As $x' \land x'{^*}'' \leq x' \land x'^*=0$, we get
$x' \land y' 
= (x' \land x'{^*}'') \lor (x' \land y')$.  Hence, 
$$
\begin{array}{lcll}
x' \land y' 
&=& x' \land (x'{^*}'' \lor y') \\ 
&=& x' \land [(x \lor y)' \lor x'{^*}'' \lor y']  &\text{since $(x \lor y)' \leq y'$} \\
&=& x' \land [(x \lor y)' \lor x'{^*}'' \lor y''']  &\\
&=& x' \land [(x \lor y)' \lor (x'^* \lor y')'']   &\\
&=& x' \land [(x \lor y)' \lor \{(x'^* \lor x')' \lor (x'^* \lor y')' \}']  &\text{by Lemma \ref{L1} (\ref{107}}) \\ 
&=& x' \land [(x \lor y)' \lor \{x'^* \lor (x \lor y)' \}'']  &\text{by Lemma \ref{Lemma_84} (2)} \\      
&=& x' \land [(x \lor y)' \lor \{x' \to (x \lor y)' \}'']  &\text{by Lemma \ref{L1} (7)}\\      
&=&  (x \lor y)'  &\text{by Lemma \ref{2.3} (7)}.  
\end{array}
$$
Hence, $\mathbf{JID} \subseteq \mathbf{Dms}$.
\end{proof}

The following lemma is useful in this and later sections.

\begin{Lemma} \label{E} Let $x,y,z \in \mathbf{L}$.  Then  
\begin{enumerate}
\item[(1)]   $x'{^*}''  =  x'^*$,   
\item[(2)]   $x''{^*}  =  x'{^*}'$,  
\item[(3)]  $x \to (x  \land y') = x^* \lor y'$,    
\item[(4)]  $(x \land y'^*)^*  =  y' \lor x^*$,    
\item[(5)]  $(x' \lor y''{^*}){^*}'  =  (x'' \land y'{^*})^*$. 
\end{enumerate}
\end{Lemma}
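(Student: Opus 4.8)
The unifying observation behind all five parts is that, in any $\mathbf{L}\in\mathbf{JID}$, the elements $z'$ and $z'^*$ form a \emph{complementary pair} for every $z\in L$: indeed $z'\wedge z'^*=z'\wedge(z'\to 0)=z'\wedge 0=0$ by (SH1), while $z'\vee z'^*=1$ is exactly Lemma~\ref{L1}(3). Thus $z'$ and $z'^*$ are complemented elements of the distributive lattice reduct, each being the Boolean complement of the other. I will combine this with two elementary facts about the pseudocomplement $a^*=a\to 0$ in a distributive pseudocomplemented lattice, both of which hold in any semi-Heyting algebra: (F1) $(a\vee b)^*=a^*\wedge b^*$, and (F2) if $b$ is complemented with complement $\bar b$, then $b^*=\bar b$ (hence $b^{**}=b$) and $(a\wedge b)^*=a^*\vee\bar b$. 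With these in hand, parts (1), (2), (4), (5) become pure complemented-lattice arithmetic, and only (3) will require the semi-Heyting implication.

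For (2), I show that both $(x'')^*$ and $(x'^*)'$ are complements of $x''$, and then invoke uniqueness of complements in a distributive lattice. On one hand $x''\wedge(x'^*)'=(x'\vee x'^*)'=1'=0$, using the $\vee$-De Morgan law (Theorem~\ref{Theorem_105}) together with Lemma~\ref{L1}(3), and $x''\vee(x'^*)'=(x'\wedge x'^*)'=0'=1$ by (b); on the other hand $x''\wedge(x'')^*=0$ and $x''\vee(x'')^*=1$, the latter being Lemma~\ref{L1}(3) applied to $x'$. Hence $(x'')^*=(x'^*)'$, which is (2). Part (1) then follows formally: (2) applied to $x$ gives $(x'^*)'=(x'')^*$, while (2) applied to $x'$ together with $x'''=x'$ (Lemma~\ref{2.2}) gives $((x'')^*)'=x'^*$; priming the first identity and substituting the second yields $(x'^*)''=((x'')^*)'=x'^*$.

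For (4), $y'^*$ is complemented with complement $y'$, so (F2) immediately gives $(x\wedge y'^*)^*=x^*\vee y'=y'\vee x^*$. For (5), I compute the two sides separately. By (F1), $(x'\vee y''^*)^*=x'^*\wedge(y''^*)^*$; since $y''$ is complemented (Lemma~\ref{L1}(3) applied to $y'$ gives $y''\vee y''^*=1$), its double pseudocomplement collapses, $(y''^*)^*=y''$, so the expression equals $x'^*\wedge y''$. Priming via (b) turns this into $(x'^*)'\vee y'''=x''^*\vee y'$, using (2) for $(x'^*)'=x''^*$ and $y'''=y'$ (Lemma~\ref{2.2}). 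This is exactly the right-hand side, since (4) applied with $x''$ in place of $x$ gives $(x''\wedge y'^*)^*=y'\vee x''^*$.

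Part (3) is the genuine obstacle, as it is the only identity invoking $\to$ nontrivially, so I treat it last. Because $y'\vee y'^*=1$ and $y'\wedge y'^*=0$, distributivity reduces any equality of two elements to checking their meets against $y'$ and against $y'^*$ separately. Setting $U=x\to(x\wedge y')$ and $V=(x\vee y')\to y'$, I use (SH2) and (SH3) to compute $y'\wedge U=y'\wedge\big[(y'\wedge x)\to(y'\wedge x)\big]=y'$ and likewise $y'\wedge V=y'$, while $y'^*\wedge U=y'^*\wedge(y'^*\wedge x)^*=y'^*\wedge V$ (here the meet annihilates $y'$ inside the implication, leaving an implication into $0$). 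Hence $U=V$, and then Lemma~\ref{L1}(4) evaluates $V=(x\vee y')\to y'=x^*\vee y'$, giving (3). The delicate point is exactly this decomposition: once the implication is resolved on each ``Boolean component'' determined by the complementary pair $(y',y'^*)$ via (SH2)--(SH3), the identity drops out, and no further appeal to $\to$ is needed.
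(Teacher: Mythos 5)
Your proof is correct, and it follows a genuinely different route from the paper's. The paper proves each item by direct equational computation: (1) and (2) by squeezing two inequalities out of $x'\lor x'^*=1$ and $x'\land x'^*=0$ via the De Morgan laws, (3) by applying (JID) twice and finishing with Lemma \ref{2.3}(6), and (4) and (5) by long chains that re-use (2), (3) and (JID) repeatedly. You instead isolate the structural fact that $z'$ and $z'^*$ form a complementary pair (Lemma \ref{L1}(3) plus (SH1)) and then lean on two standard facts about pseudocomplements in distributive lattices --- $(a\vee b)^*=a^*\wedge b^*$, and, for complemented $b$, $b^*=\bar b$ and $(a\wedge b)^*=a^*\vee\bar b$ --- both of which are easily verified and legitimately available, since the paper records that semi-Heyting algebras are distributive and pseudocomplemented with $a^*=a\to 0$ the pseudocomplement. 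This buys you several things: (2) becomes uniqueness of complements in a distributive lattice; (1) falls out of (2) formally by substituting $x'$ and priming, rather than needing its own argument; (4), which costs the paper its longest computation, is a one-line instance of your fact (F2); and (5) reduces to (F1), (F2), (2), (4) and axiom (b). Your proof of (3), decomposing against the pair $(y',y'^*)$ via (SH2)--(SH3) and then invoking Lemma \ref{L1}(4), is also sound and makes transparent exactly where the implication is needed, whereas the paper manipulates (JID) directly. The trade-off is that your argument imports unproved (though elementary and classical) p-algebra facts, while the paper's computations are self-contained within its own lemmas; a careful write-up should include the two-line verifications of (F1) and (F2). Note also that (JID) still enters your proof, but only through Lemma \ref{L1}(3) and \ref{L1}(4), which cleanly localizes its role.
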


\begin{proof} 
\noindent(1):  
From Lemma \ref{L1} (3) we have $x' \lor x'^* =1$, which 
yields $x''' \lor x'{^*}'' =1$, implying $x' \lor x'{^*}'' =1$, leading to $x'^* \leq x'{^*}'' $; thus,
$x'^* = x'{^*}'' $.\\

\noindent(2):   
From $x' \lor x'^* =1$ and Theorem \ref{Theorem_105} 
we get  $x'' \land x'{^*}' =0$, implying $x'{^*}' \leq x''^*$.  To prove the reverse inequality, from  
$x' \land x'^* =0$,   
we get $x'' \lor x'{^*}' =1$, from which it follows that
$x''^* \leq x'{^*}' $. \\

\noindent(3): 
$\begin{alignedat}[t]{2}
x^* \lor y'
&= (y' \lor x)  \to y'  \quad &\text{ by (JID)} \\  
&= (y'  \lor x) \to [y' \lor (x \land  y')]  &\\  
&= y'  \lor [x \to (x \land  y')]  \quad &\text{by (JID)} \\                                                    
&= x \to (x  \land y')  \quad &\text{by Lemma \ref{2.3} (6)}. \\   
\end{alignedat}$\\

\noindent(4):   
$\begin{alignedat}[t]{2}
(x \land y'^*)^*
&= (x \land  y'''{^*})^*   \quad &\text{ by   Lemma \ref{2.2} (v)}  \\                  
&=  (x \land y'{^*}'')^*   \quad &\text{  by (2) (twice)}\\ 
&=  (x \land y'{^*}'')   \to (y' \land x \land y'{^*}'')   \quad &\text{  as $y'  \land y'{^*}''=0$}\\ 
&=  y' \lor (x \land y'{^*}'')^*  \quad &\text{ by (3)} \\        
&=  y' \lor (x \land y'''{^*})^*   \quad &\text{ by (2) (twice)}\\ 
&=  y' \lor (x \land y'{^*})^*  \quad &\text{ by Lemma \ref{2.2} (v)}\\ 
&= y' \lor [(x \land y'^*) \to 0] \\ 
&=  [y' \lor (x \land y'^*)] \to y'    \quad &\text{ by (JID)}\\
&=  [(y' \lor x) \land (y' \lor y'^*)] \to y' \\ 
&=  [(y' \lor x) \land 1] \to y'  \quad & \text{ by Lemma \ref{L1} (3)}\\
&=  (y' \lor x)  \to y' \\
&=  y' \lor (x  \to 0)  \quad &\text{ by (JID)}\\
&= y' \lor x^*. \\
\end{alignedat}$\\

\noindent(5): 
$\begin{alignedat}[t]{2}
(x' \lor y''{^*}){^*}' 
&=  (x' \lor y'{^*}'){^*}' \quad &\text{ by (2)}\\ 
&=  (x \land y'^*)'{^*}' \\
&=  (x \land y'^*)''^* \quad &\text{ by (2)}\\
&=  (x' \lor y'{^*}')'{^*} \\
&=  (x'' \land y'{^*}'')^* \quad &\text{  by Theorem \ref{Theorem_105}} \\
&= (x'' \land y'''{^*})^* \quad &\text{ by (2)  (twice)} \\  
&= (x'' \land y'{^*})^*.
\end{alignedat}$\\
This completes the proof.
\end{proof}

\subsection{An Alternate Definition of ``level\ $n$''.}

\ \\ \ \\ 
The following lemmas enable us to give an alternate definition of ``Level $n$''.  

\begin{Lemma} \label{Lemma_for_altrnate-definition}  
Let $x \in \mathbf{L}$.  Then $x'^{**} = x'$.  
\end{Lemma}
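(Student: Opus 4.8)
The plan is to recognize this as a statement about \emph{closed} (equivalently, regular) elements in a pseudocomplemented distributive lattice: the element $x'$ satisfies $x' \vee x'^{*} = 1$, and any such element $a$ automatically satisfies $a^{**} = a$. So the whole task reduces to invoking the already-established identity $x' \vee x'^{*} = 1$ and running the standard Glivenko-style computation.

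More precisely, first I would set $a := x'$ and recall two facts that hold in every semi-Heyting algebra, since by the remarks in Section~\ref{SB} these algebras are distributive and pseudocomplemented with $a^{*} = a \to 0$. The pseudocomplement property gives $a^{*} \wedge a^{**} = 0$ (as $a^{**} = (a^{*})^{*}$ and $b \wedge b^{*} = 0$ with $b = a^{*}$), and it also gives the monotone-closure inequality $a \leq a^{**}$, whence $a^{**} \wedge a = a$. The remaining ingredient is the equation $a \vee a^{*} = 1$, which for $a = x'$ is exactly the ``in particular'' clause of Lemma~\ref{L1}(\ref{107}), namely $x' \vee x'^{*} = 1$.

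With these in hand the computation is immediate: meeting $x'^{**}$ with the unit written as $x' \vee x'^{*}$ and distributing,
\[
x'^{**} = x'^{**} \wedge (x' \vee x'^{*}) = (x'^{**} \wedge x') \vee (x'^{**} \wedge x'^{*}) = x' \vee 0 = x',
\]
using $x'^{**} \wedge x' = x'$ and $x'^{**} \wedge x'^{*} = 0$ from the previous paragraph. This yields the claim.

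There is essentially no hard step here; the only point worth flagging is the \emph{recognition} that the lemma is governed entirely by the complementation identity $x' \vee x'^{*} = 1$ rather than by any finer arithmetic of $\to$ or of the negation $'$. Everything else is the generic pseudocomplemented-lattice argument, and the role of $\mathbf{JID}$ enters solely through Lemma~\ref{L1}(\ref{107}), which is where the (JID) identity was actually used. So I would present the proof as a two-line distributive calculation preceded by a one-sentence citation of that lemma.
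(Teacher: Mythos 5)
Your proof is correct and is essentially the paper's own argument: the paper also derives $x'^{**}=x'$ directly from $x'\lor x'^{*}=1$ (Lemma \ref{L1}(3)) together with $x'\land x'^{*}=0$, i.e.\ from $x'$ being a complement of $x'^{*}$ in a distributive lattice. You have merely written out the two-line distributive computation that the paper leaves implicit.
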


\begin{proof} 
Since $x' \lor x'^* =1$ by Lemma \ref{L1}, and $x' \land x'^* =0$, we get  
$x'^{**} = x'$.
\end{proof}

\begin{Lemma}
Let $x \in \mathbf{L}$.  Then $x \land x'^* \land x'{^*}'^* =  (x \land x'^*)'^*$.   
\end{Lemma}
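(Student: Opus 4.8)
The plan is to collapse both sides of the identity to the single element $x'^* \land x''$. The decisive preliminary step is to show that the tower $x'{^*}'^*$ simplifies to $x''$. Indeed, Lemma \ref{E}(2) gives $x'{^*}' = x''{^*}$, so that $x'{^*}'^* = (x''{^*})^*$; and since $x''$ is a primed element, Lemma \ref{Lemma_for_altrnate-definition} (which asserts $z'^{**} = z'$ for every $z$, applied here with $z := x'$) yields $(x''{^*})^* = x''$. Thus I would first establish $x'{^*}'^* = x''$ and then feed it into both sides.

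Granting this, the left-hand side is immediate: $x \land x'^* \land x'{^*}'^* = x \land x'^* \land x''$, and because $x'' \le x$ by axiom (d) the factor $x$ is absorbed, giving $x \land x'^* \land x'{^*}'^* = x'^* \land x''$. For the right-hand side I would push the outer $'$ through the meet by the De Morgan law (b), obtaining $(x \land x'^*)' = x' \lor x'{^*}'$, and then take pseudocomplements using the distributive-lattice identity $(u \lor v)^* = u^* \land v^*$ (valid in every distributive pseudocomplemented lattice, and semi-Heyting algebras are such); this gives $(x \land x'^*)'^* = (x')^* \land (x'{^*}')^* = x'^* \land x'{^*}'^*$, which by the collapse above is again $x'^* \land x''$. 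Since both sides reduce to $x'^* \land x''$, the identity follows.

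I do not expect a substantive obstacle: the proof is a short chain of rewrites resting on Lemma \ref{E}(2), Lemma \ref{Lemma_for_altrnate-definition}, axioms (b) and (d), and the elementary $(u \lor v)^* = u^* \land v^*$. The only genuine care needed is bookkeeping with the primed-and-starred towers, namely reading $x'{^*}'^*$ as $(((x')^*)')^*$, invoking Lemma \ref{E}(2) in the correct direction $x'{^*}' = x''{^*}$, and recognizing that the collapse $x'{^*}'^* = x''$ is exactly the instance of $z'^{**} = z'$ at $z = x'$. Once these are fixed, everything else is routine.
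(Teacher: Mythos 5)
Your proposal is correct and takes essentially the same route as the paper: both proofs rest on the collapse $x'{^*}'^* = x''^{**} = x''$ (Lemma \ref{E}(2) combined with Lemma \ref{Lemma_for_altrnate-definition} at $x'$), absorption of $x$ via $x'' \le x$, and the identification $(x \land x'^*)'^* = x'^* \land x'{^*}'^*$, so that everything comes down to the element $x'^* \land x''$. The only cosmetic difference is that the paper writes a single chain of equalities passing through $x'^* \land x''$ and can obtain the final identification directly from Lemma \ref{2.2}(iii) (i.e.\ $(x \land y)'^* = x'^* \land y'^*$), whereas you meet in the middle and re-derive that identification from axiom (b) together with $(u \lor v)^* = u^* \land v^*$.
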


\begin{proof}
$\begin{alignedat}[t]{2}
x \land x'^* \land x'{^*}'^*   
& = x  \land x'^* \land x''^{**}  \quad &\text{ by Lemma \ref{E} (2)}\\  
& =  x \land x'^* \land x''  \quad &\text{  by Lemma \ref{Lemma_for_altrnate-definition}}\\ 
& =  x'^* \land x'' \\
&= x'^*  \land x''^{**}   \quad &\text{ by Lemma \ref{Lemma_for_altrnate-definition}} \\  
&= x'^*  \land x'{^*}'^* \quad &\text{ by Lemma \ref{E} (2)} \\  
&= (x \land x'^*)'^*.
\end{alignedat}$\\
\end{proof}

Recall $\mathbf{JID_n} = \mathbf{JID} \cap \mathbf{DQD_n}$.  The above lemma allows us to make the following alternate (but equivalent) definition for $\mathbf{JID_n}$, for $n \in \omega$.

\begin{definition}
Let $n \in \omega$.
The variety $\mathbf{JID_n}$ 
is the subvriety of $\mathbf{JID}$ defined by
\begin{itemize}
\item[{\rm(Lev  n)}]  \quad  $(x \land x'^*){^{n(}}{'{^{*)}}} \approx  (x \land x'^*)^{(n+1)(}{'{^{*)}}}. $
\end{itemize}
\end{definition} 
In the rest of the paper we will mostly consider varieties of Level 1 and Level 2.

\subsection{The Level of JID}
\  \\ \ \\
Next, we wish to prove that $\mathbf{JID}$ is at Level $2$.

\begin{Theorem} \label{DmsSH}
$\mathbf{JID} \subseteq \mathbf{Dms_2}$; but $\mathbf{JID} \not \subseteq \mathbf{Dms_1}$.   
\end{Theorem}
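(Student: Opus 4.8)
The plan is to establish the two halves separately. For the inclusion $\mathbf{JID} \subseteq \mathbf{Dms_2}$ I would combine Theorem~\ref{Theorem_105} (which already gives $\mathbf{JID} \subseteq \mathbf{Dms}$) with a direct proof that every member of $\mathbf{JID}$ sits at level $2$; for the non-inclusion $\mathbf{JID} \not\subseteq \mathbf{Dms_1}$ I would exhibit a single finite algebra of $\mathbf{JID}$ in which $t_1 \not\approx t_2$.

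For the positive half, since $\mathbf{Dms_2} = \mathbf{Dms} \cap \mathbf{DQD_2}$, it remains only to show $\mathbf{JID} \subseteq \mathbf{DQD_2}$, i.e. $t_2(x) \approx t_3(x)$. The key is to identify the iterates of the unary term $u \mapsto u'^*$. First, $(x'^*)'^* = ((x'^*)')^* = (x''^*)^* = x''^{**} = x''$, using Lemma~\ref{E}(2) (which gives $(x'^*)' = x''^*$) and Lemma~\ref{Lemma_for_altrnate-definition} applied with $x'$ in place of $x$ (which gives $x''^{**} = x''$). Applying the operation once more and using $x''' = x'$ from Lemma~\ref{2.2}, $((x'^*)'^*)'^* = (x'')'^* = x'''^* = x'^*$. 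So the second iterate of $x$ is $x''$ and the third returns to $x'^*$. Consequently $t_2(x) = x \land x'^* \land x''$, which lies below $x'^*$, and $t_3(x) = t_2(x) \land x'^* = t_2(x)$. Hence $\mathbf{JID} \subseteq \mathbf{DQD_2}$, and with Theorem~\ref{Theorem_105} we conclude $\mathbf{JID} \subseteq \mathbf{Dms_2}$.

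For the non-inclusion I would produce a concrete witness $\mathbf{L}$. Take the Heyting algebra on the lattice $\mathbf{2} \times \mathbf{3}$ (with $\mathbf{3}$ the chain $0 < m < 1$), writing its elements as $0 = (0,0)$, $y = (0,m)$, $s = (0,1)$, $r = (1,0)$, $x = (1,m)$, $1 = (1,1)$, with $\to$ computed coordinatewise from the Boolean and G\"odel implications, and with $'$ given by $0' = 1$, $y' = 1$, $r' = r$, $s' = s$, $x' = r$, $1' = 0$. I would first check that $'$ is order-reversing and satisfies clauses (a)--(d) of the definition of a $\mathbf{DQD}$-algebra, so that $\mathbf{L}$ is a $\mathbf{DQD}$- (indeed Heyting) algebra. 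Since every primed element lies in the Boolean part $\{0, r, s, 1\}$ and $\to, \lor$ are coordinatewise, (JID) splits into its two factor-conditions, each holding because in the relevant coordinate the primed element is $0$ or $1$; thus $\mathbf{L} \in \mathbf{JID}$. Finally $x' = r$, $x'^* = r^* = s$ and $x'' = r$ give $t_1(x) = x \land s = y \neq 0 = x \land s \land r = t_2(x)$, so $\mathbf{L} \notin \mathbf{DQD_1}$, and a fortiori $\mathbf{L} \notin \mathbf{Dms_1}$.

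The positive inclusion is essentially automatic once the iterates $u'^*$ are computed, so the real difficulty lies in the second half. The obstruction is that several natural candidates fail: a Boolean lattice reduct forces $'$ to be De Morgan and hence level $1$, while the requirement $x' \lor x'^* \approx 1$ of Lemma~\ref{L1} forces every primed element to be complemented, so chains collapse $'$ to the dual Stone negation (again level $1$). The construction must therefore use a non-Boolean, non-chain lattice with enough complemented elements to host the primed values, together with a genuinely non-regular $x$ whose $x''$ and $x'^*$ are incomparable; verifying that (JID) actually holds for this $x$ is the step most likely to go wrong and is what pins down the example.
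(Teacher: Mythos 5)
Your proposal is correct, and both halves follow the same overall plan as the paper's proof, but with two differences worth recording. For the inclusion, the paper also starts from Theorem~\ref{Theorem_105} and then verifies a level-2 identity by a single computation on the element $x \land x'^*$, showing $((x \land x'^*)'^*)'^* = (x \land x'^*)'^*$ via Lemma~\ref{E}(2), (4) and (5); you instead compute the iterates of $x$ itself under $u \mapsto u'^*$, obtaining $(x'^*)'^* = x''$ (from Lemma~\ref{E}(2) and Lemma~\ref{Lemma_for_altrnate-definition} applied to $x'$) and then $((x'^*)'^*)'^* = x'^*$ (from $x''' = x'$), and read off $t_2(x) = t_3(x)$ directly from the original recursive definition of the terms $t_n$. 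Your route has the advantage of working with the official definition of $\mathbf{DQD_2}$ rather than the paper's ``alternate'' definition (whose indexing relative to $t_n \approx t_{n+1}$ is slightly delicate), and it isolates a clean structural fact not stated explicitly in the paper: in $\mathbf{JID}$ the iterates of $'^*$ are eventually periodic with period two, namely $x'^*, x'', x'^*, x'', \dots$. For the non-inclusion, your witness is exactly the paper's algebra $\mathbf{SIX}$ of Figure~2 in different clothing: the lattice of $\mathbf{SIX}$ is $\mathbf{2} \times \mathbf{3}$, and under the renaming $d \mapsto y$, $c \mapsto s$, $b \mapsto r$, $a \mapsto x$ its unary operation $'$ coincides with yours. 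The paper merely displays the tables and asserts, without verification, that $\mathbf{SIX} \in \mathbf{JID}$ and fails (Lev 1); your coordinatewise argument for (JID) --- every primed element lies in $\{0, r, s, 1\}$, and in each coordinate such an element is $0$ or $1$, where the identity is trivial --- together with the explicit computation $t_1(x) = y \neq 0 = t_2(x)$, supplies precisely the checking that the paper leaves to the reader.
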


\begin{proof} 

We already know from Theorem \ref{Theorem_105} that $\mathbf{JID} \subseteq \mathbf{Dms}$.  We now prove the ``level 2'' identity.
\begin{align*}
(x \land x'^*)'{^*}'{^*} &= (x' \lor x'{^*}'){^*}'{^*}  \\
                                      &= (x' \lor x''{^*}){^*}'{^*} & \mbox{by Lemma \ref{E} (2)}\\ 
                                       &= (x'' \land x'{^*})^{**} & \mbox{by  Lemma \ref{E} (5)}  \\                                         
                                      &= (x' \lor x''{^*})^* & \text{by  Lemma \ref{E} (4)} \\ 
                                      &= (x' \lor x'{^*}')^* & \text{by  Lemma \ref{E} (2)}\\  
                                      &= (x \land x'{^*})'^*.   
\end{align*}
To finish off the proof, we note that $\mathbf{SIX} \in \mathbf{JID}$, but it is not of level 1, where $\mathbf{SIX}$ is the algebra whose lattice reduct, $\to$ and $'$ are given in  
Figure 2.
\end{proof}
\vspace{2cm}
\setlength{\unitlength}{.7cm} \vspace*{.5cm}
\begin{picture} (8,1)  % (6,4)

\put(4,0){\circle*{.15}}  
\put(3.5,0){$1$}  

%\put(5,1){\circle*{.15}} 
\put(5.2,-1.1){$a$}  

\put(3,-1){\circle*{.15}} 
\put(2.5,-1){$c$}
\put(5,-1){\circle*{.15}} 
                               %\put(5.2,-1){$e$}  

\put(4,-2){\circle*{.15}}  
\put(3.4, -2.1){$d$}  

\put(6,-2){\circle*{.15}}  
\put(6.2, -2.1){$b$}  

\put(5,-3){\circle*{.15}} 
\put(5.2,-3.3){$0$}

\put(4,-2){\line(1,1){1}}
\put(5,-3){\line(1,1){1}}

\put(4,0){\line(1,-1){1}}
\put(5,-1){\line(1,-1){1}}
\put(4,-2){\line(1,-1){1}}
                               %\put(5,1){\line(1,-1){1}}

\put(3,-1){\line(1,1){1}}
\put(3,-1){\line(1,-1){1}}

\put(5,-10.0){Figure 2}      %-10.5
\end{picture}

\vspace{2cm}

\ \\

\begin{tabular}{r|rrrrrr}
$\to$: & $0$ & $1$ & $a$ & $b$ & $c$ & $d$\\
\hline
    $0$ & $1$ & $1$ & $1$ & $1$ & $1$ & $1$ \\
    $1$ & $0$ & $1$ & $a$ & $b$ & $c$ & $d$ \\
    $a$ & $0$ & $1$ & $1$ & $b$ & $c$ & $c$ \\
   $b$ & $c$ & $1$ & $1$ & $1$ & $c$ & $c$ \\
   $c$ & $b$ & $1$ & $a$ & $b$ & $1$ & $a$ \\
    $d$ & $b$ & $1$ & $1$ & $b$ & $1$ & $1$
\end{tabular} \hspace{.5cm}
\begin{tabular}{r|rrrrrr}
$'$: & $0$ & $1$ & $a$ & $b$ & $c$ & $d$\\
\hline
   & $1$ & $0$ & $b$ & $b$ & $c$ & $1$
\end{tabular} \hspace{.5cm}
\ \\ \ \\

\vspace{.7cm}

The following corollary is immediate from the above theorem and ~\cite[Corollary 8.2(a)]{Sa11}. 

\begin{Corollary} \label{Discr}
 $\mathbf{JID}$ is a discriminator variety of {\rm level 2}. 
\end{Corollary}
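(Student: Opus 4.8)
The plan is to observe that, with Theorem \ref{DmsSH} already established, the corollary requires no further algebraic labor: it follows by a direct appeal to the structural criterion of \cite[Corollary 8.2(a)]{Sa11}. Recall that \cite[Corollary 8.2(a)]{Sa11} guarantees that any subvariety of $\mathbf{DQD}$ which simultaneously satisfies the $\lor$-De Morgan law (equivalently, is contained in $\mathbf{Dms}$) and is of some finite level $n$ (equivalently, is contained in $\mathbf{DQD_n}$) is a discriminator variety. Hence the entire task reduces to checking that $\mathbf{JID}$ meets these two hypotheses and then to pinning down the exact level.

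The key step is to read off both hypotheses from the single inclusion $\mathbf{JID} \subseteq \mathbf{Dms_2}$ furnished by Theorem \ref{DmsSH}. Since, by the convention $\mathbf{V_n} := \mathbf{V} \cap \mathbf{DQD_n}$, one has $\mathbf{Dms_2} = \mathbf{Dms} \cap \mathbf{DQD_2}$, this one inclusion delivers at once that $\mathbf{JID}$ satisfies the $\lor$-De Morgan law (this was in fact already isolated in Theorem \ref{Theorem_105}) and that $\mathbf{JID} \subseteq \mathbf{DQD_2}$, the latter being exactly the level-$2$ identity $(x \land x'^*){^{2\rm({'^*}\rm)}} \approx (x \land x'^*){^{3\rm({'^*}\rm)}}$ verified in that theorem. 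I would then apply \cite[Corollary 8.2(a)]{Sa11} with $n = 2$ to conclude immediately that $\mathbf{JID}$ is a discriminator variety.

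It remains only to justify the qualifier ``of level $2$'', that is, that $2$ is the least admissible level. This is precisely the second assertion of Theorem \ref{DmsSH}, namely $\mathbf{JID} \not\subseteq \mathbf{Dms_1}$, which in particular forces $\mathbf{JID} \not\subseteq \mathbf{DQD_1}$; the witness is the six-element algebra $\mathbf{SIX}$ of Figure 2, which lies in $\mathbf{JID}$ yet fails the level-$1$ identity. Thus $\mathbf{JID}$ sits in $\mathbf{DQD_2} \setminus \mathbf{DQD_1}$ and is of level exactly $2$, completing the argument.

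I do not anticipate a genuine obstacle, since all the analytic content has already been discharged inside Theorem \ref{DmsSH}. The only point demanding care is bookkeeping between the two equivalent formulations of level -- the $t_n$-based definition of $\mathbf{DQD_n}$ from Definition \ref{5.5} and the simplified $(x \land x'^*){^{n\rm({'^*}\rm)}}$-form valid inside $\mathbf{JID}$ -- so as to confirm that the identity actually proved in Theorem \ref{DmsSH} corresponds to the membership $\mathbf{JID} \subseteq \mathbf{DQD_2}$ that the hypothesis of \cite[Corollary 8.2(a)]{Sa11} requires.
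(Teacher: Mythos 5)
Your proposal is correct and follows essentially the same route as the paper: the paper's proof is exactly the observation that the corollary is immediate from Theorem \ref{DmsSH} (which supplies both the $\lor$-De Morgan law and the level-$2$ identity, with $\mathbf{SIX}$ witnessing failure of level $1$) combined with \cite[Corollary 8.2(a)]{Sa11}. Your additional bookkeeping about $\mathbf{Dms_2} = \mathbf{Dms} \cap \mathbf{DQD_2}$ and the equivalence of the two formulations of level merely makes explicit what the paper leaves implicit.
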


\vspace{.8cm}
\section{Dually Stone Semi-Heyting algebras}

The study of dually Stone Heyting algebras goes back to \cite{Sa85}, while the investigations into dually Stone semi-Heyting algebras were initiated in \cite{Sa11}.
 In this section we will prove that the variety $\mathbf{DSt}$ is a discriminator variety of level 1 and  also present some of its properties 
 that, besides being of interest in their own right, will be needed in the later sections.   We will also prove that the lattice of subvarieties of $\mathbf {DStHC}$ is an $\omega + 1$-chain--a result which was implicit in \cite[Section 13]{Sa11}.

See Section 2 for the definition of the condition (SC). 
The following theorem will be useful in the next section.

\begin{Theorem} \label{DSt_JID}  
\begin{thlist}
\item[a] $\mathbf{DSt} \models x'' \approx x'^*$;    
\item[b] $\mathbf{DSt} \models {\rm(Lev\ 1)}$;   
\item[c] If $\mathbf{L} \in \mathbf{DSt}$ and  $\mathbf{L} \models {\rm (SC)}$, then $\mathbf{L} \in \mathbf{JID_1}$.   
\end{thlist}
\end{Theorem}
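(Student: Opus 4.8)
The plan is to treat the three parts in order, using (a) inside (b) and both inside (c). Throughout I would use that every $\mathbf{DSt}$-algebra satisfies both the Stone identity (DSt), $x' \land x'' \approx 0$, and the identity (e), $x \lor x' \approx 1$ (the latter is recorded in the Definition), and that the semi-Heyting reduct is distributive and pseudocomplemented with $a^* = a \to 0$.

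For (a), I would first apply (e) to the element $x'$ to get $x' \lor x'' = 1$; together with $x' \land x'' = 0$ from (DSt) this exhibits $x''$ as a \emph{lattice} complement of $x'$. Since the reduct is distributive, complements are unique and coincide with the pseudocomplement: explicitly $x'^* = x'^* \land (x' \lor x'') = (x'^* \land x') \lor (x'^* \land x'') = x'^* \land x'' \le x''$, while $x' \land x'' = 0$ gives $x'' \le x'^*$. Hence $x'' = x'^*$.

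For (b), the key remark is that (a) reads $y'^* = y''$ for every $y$. Applying it to the element $x'$ and using $x''' = x'$ from Lemma~\ref{2.2}(iv) gives $x''^* = (x')'^* = (x')'' = x''' = x'$. A short chase then yields $y'^* = (y'^*)'^*$ for every $y$: indeed $(y'^*)'^* = (y'')'^* = (y'')'' = y'''' = y'' = y'^*$, using (a) twice together with $y''' = y'$. Specializing $y := x \land x'^*$ produces exactly ${\rm(Lev\ 1)}$, so $\mathbf{DSt} \models {\rm(Lev\ 1)}$ (in fact the stronger unrestricted identity holds).

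Part (c) carries the real content. Assuming (SC), I would apply it not to $x$ but to $x'$: if $x' \neq 1$ then $x' \land (x')'^* = 0$; but $(x')'^* = x''^* = x'$ by the computation in (b), so this collapses to $x' = 0$. Thus $x' \in \{0,1\}$ for every $x$. Given this, (JID) is verified by a two-case split on the fixed value of $x'$: if $x' = 1$ both sides equal $1$, and if $x' = 0$ both sides reduce to $y \to z$. Hence $\mathbf{L} \models {\rm(JID)}$, i.e. $\mathbf{L} \in \mathbf{JID}$. Since, by the alternate definition, $\mathbf{JID_1}$ is precisely the subvariety of $\mathbf{JID}$ defined by ${\rm(Lev\ 1)}$, and part (b) gives $\mathbf{L} \models {\rm(Lev\ 1)}$, we conclude $\mathbf{L} \in \mathbf{JID_1}$.

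The main obstacle is the reduction step in (c): seeing that (SC) must be applied to $x'$ rather than $x$, and that the identity $x''^* = x'$ then forces $'$ to be two-valued. Once $x' \in \{0,1\}$ is established, every remaining verification — of (JID) by the case split, and of level-$1$ membership by quoting part (b) against the alternate definition of $\mathbf{JID_1}$ — is routine. A secondary point is to keep the two (equivalent, inside $\mathbf{JID}$) formulations of ``level $1$'' straight, which the argument above sidesteps by invoking the alternate definition of $\mathbf{JID_1}$ directly.
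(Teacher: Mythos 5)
Your argument is correct and follows essentially the same route as the paper: part (a) is precisely the uniqueness-of-complements argument behind what the paper dismisses as ``the dual of a well known property of Stone algebras''; part (b) is the same computation driven by (a) (your unrestricted identity $(y'^*)'^* \approx y'^*$ is in fact a little stronger than needed); and part (c), like the paper's proof, uses (SC) to force $x' \in \{0,1\}$ (the paper applies (SC) to $a \neq 1$ to get $a' = 1$, you apply it to $x'$ to get $x' = 0$ when $x' \neq 1$), verifies (JID) by the same two-case split, and then quotes (b). The one point to watch is that your specialization $y := x \land x'^*$ literally yields $(x \land x'^*)'^* \approx \bigl((x \land x'^*)'^*\bigr)'^*$, whereas the form of {\rm (Lev 1)} the paper actually uses in Section 5 (and proves in its version of (b)) is $x \land x'^* \approx (x \land x'^*)'^*$; this is harmless, because with (a) and $x'' \leq x$ one gets $x \land x'^* = x \land x'' = x''$ and $(x'')'^* = (x'')'' = x''$, so your identities deliver that form in one extra line.
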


\begin{proof} 
We note that (a) is the dual of a well known property of Stone algebras.
From (a)   
we have $(x \land x'^*)'^* = (x \land x'')'^* = x'''^* = x'^* = x'' = x \land x'^*$, implying that (b) holds.      
Finally, let $\mathbf{L} \in \mathbf{DSt}$ and satisfy (SC) and let $a \in L \setminus \{1\}$.  Then, by (SC) and (a), we have 
 $a'' = a \land a'' = a \land a'^*=0$, implying $a'=1.$  Then it is straightforward to verify that $\mathbf{L} \models {\rm (JID)}$.  Hence, (c) holds, in view of (b).       
\end{proof}

\begin{remark}
{\rm In contrast to $\mathbf{DSt}$, $\mathbf{DPC}$ is not, however, at level 1.  For example, the algebra $\mathbf{EIGHT}$, whose lattice reduct, $\to$ and $'$ are given below, is, in fact, in the subvariety of $\mathbf{DPC}$, defined by: $(x \lor y)' \land (x' \lor y)' \land (x \lor y')'=0$; but it fails to satisfy {\rm(Lev 1)} identity. }

\setlength{\unitlength}{.7cm} \vspace*{.5cm}
\begin{picture} (8,1)  % (6,4)
                 \begin{comment}
                          \put(1.7,1){\circle*{.15}}         %(2,1)

                          %\put(1.7,2.3){\circle*{.15}}       %(2,2.3

                            \put(1.3,.9){$0$}               %(1.6,9)

                             \put(1.3,2.1){$1$}                 %(1.6, 2.1

                            \put(.2,1.5){$\bf 2$ \ :}

                             \put(-1.7,2.5){\line(-1,2){1.2}}
                              \put(1,2.5){\line(-1,2){1.2}}

                  \end{comment}

                         \put(5,1){\circle*{.15}} 
\put(5.2,1){$1$}  

\put(6,0){\circle*{.15}} 
\put(6.2,0.1){$e$}  

%%%%%%%%%

\put(4,0){\circle*{.15}}  
\put(3.5,0){$f$}  

             %\put(5,1){\circle*{.15}} 
\put(5.3,-1.1){$d$}  

\put(3,-1){\circle*{.15}} 
\put(2.5,-1){$c$}
\put(5,-1){\circle*{.15}} 
                              % \put(5.2,-1){$e$}  

\put(4,-2){\circle*{.15}}  
\put(3.4, -2.1){$a$}  

\put(6,-2){\circle*{.15}}  
\put(6.2, -2.1){$5$}  

\put(5,-3){\circle*{.15}} 
\put(5.2,-3.3){$0$}  

 \put(5,1){\line(1,-1){1}}
 \put(5,-1){\line(1,1){1}}
\put(4,0){\line(1,1){1}}

\put(4,-2){\line(1,1){1}}
\put(5,-3){\line(1,1){1}}

\put(4,0){\line(1,-1){1}}
\put(5,-1){\line(1,-1){1}}
\put(4,-2){\line(1,-1){1}}
                             %  \put(5,1){\line(1,-1){1}}

\put(3,-1){\line(1,1){1}}
\put(3,-1){\line(1,-1){1}}

\put(5,-9.5){Figure 3}
\end{picture}

\vspace{1cm}

\begin{tabular}{r|rrrrrrrr}
$'$: & $0$ & $1$ & $e$ & $c$ & $a$ & $b$ & $f$ & $d$\\
\hline
   & $1$ & $0$ & $c$ & $e$ & $1$ & $1$ & $e$ & $1$
\end{tabular} \hspace{.5cm}
\begin{tabular}{r|rrrrrrrr}
$\to$: & $0$ & $1$ & $e$ & $c$ & $a$ & $b$ & $f$ & $d$\\
\hline
    $0$ & $1$ & $0$ & $0$ & $b$ & $b$ & $c$ & $0$ & $0$ \\
    $1$ & $0$ & $1$ & $e$ & $c$ & $a$ & $b$ & $f$ & $d$ \\
    $e$ & $0$ & $1$ & $1$ & $c$ & $c$ & $b$ & $f$ & $f$ \\
    $c$ & $b$ & $c$ & $a$ & $1$ & $e$ & $0$ & $c$ & $a$ \\
    $a$ & $b$ & $c$ & $c$ & $1$ & $1$ & $0$ & $c$ & $c$ \\
    $b$ & $c$ & $b$ & $b$ & $0$ & $0$ & $1$ & $b$ & $b$ \\
    $f$ & $0$ & $1$ & $e$ & $c$ & $a$ & $b$ & $1$ & $e$ \\
    $d$ & $0$ & $1$ & $1$ & $c$ & $c$ & $b$ & $1$ & $1$
\end{tabular} \hspace{.5cm}
\end{remark}

\ \\ \ \\ \ \\ 

\begin{Lemma} \label{DSt_Lemma} 
Let $\mathbf{L} \in \mathbf{DSt}$, and $x,y \in L$.  Then $\mathbf{L}$ satisfies:\\

\indent $(x \lor y)' =(x'^* \lor y'^{*})^*= x' \land y'$. \label{2}   
\end{Lemma}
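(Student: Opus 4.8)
The plan is to reduce the dual-pseudocomplement ``join'' law for $'$ to the ordinary pseudocomplement law $(a \lor b)^* \approx a^* \land b^*$, which holds in every bounded distributive pseudocomplemented lattice and hence in every semi-Heyting algebra. One half of the target is free: since $x, y \leq x \lor y$, antitonicity of $'$ (Lemma \ref{2.2}(ii)) gives $(x \lor y)' \leq x'$ and $(x \lor y)' \leq y'$, whence $(x \lor y)' \leq x' \land y'$. All the genuine content lies in the reverse inequality and in identifying the middle expression $(x'^* \lor y'^*)^*$, and for this I would first record how $'$ and $^*$ interlock on $\mathbf{DSt}$.

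The key preliminary facts, all available in $\mathbf{DSt}$, are: (i) $x'' = x'^*$ by Theorem \ref{DSt_JID}(a); and (ii) $x'$ and $x'^*$ are complementary, since $x' \lor x'^* = x' \lor x'' = 1$ (applying (e), which holds in $\mathbf{DSt}$, to $x'$) while $x' \land x'^* = x' \land x'' = 0$ by (DSt). Because the lattice reduct is distributive, a complemented element has its complement equal to its pseudocomplement; so from (ii) I obtain $x'^{**} = x'$, and combining this with (i) gives $x' = (x'')^{*}$. I would state these interlocking identities as a short observation before the main computation.

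With them in hand the lemma falls out by a brief chain. Using (i) and the $\mathbf{DQD}$-axiom (c), $(x'^* \lor y'^*)^* = (x'' \lor y'')^* = ((x \lor y)'')^{*}$. Applying (i) to $x \lor y$ and then $w'^{**} = w'$ yields $((x \lor y)'')^{*} = (x \lor y)'^{**} = (x \lor y)'$, which is the first claimed equality. For the second, the ordinary pseudocomplement law gives $(x'' \lor y'')^* = (x'')^{*} \land (y'')^{*} = x' \land y'$, the final step using $x' = (x'')^{*}$. Together these establish $(x \lor y)' = (x'^* \lor y'^*)^* = x' \land y'$.

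The only real obstacle is conceptual rather than computational: in the quasi-De Morgan setting the join law for $'$ is not automatic (only the inequality $\leq$ is), so the dual Stone identity must be invoked somewhere. The device that makes this painless is the passage $x' = (x'')^{*}$, which trades the badly behaved operation $'$ for $^*$ and lets the always-valid law $(a \lor b)^* = a^* \land b^*$ carry the argument; accordingly, the crux is establishing the complementarity of $x'$ and $x'^*$ in step (ii). (One could instead invoke this identity as the order-dual of the classical Stone De Morgan law, but the direct computation above is self-contained and shorter.)
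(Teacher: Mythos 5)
Your proof is correct, and every ingredient you use is available at this point in the paper: antitonicity of $'$ (Lemma \ref{2.2}(ii)), Theorem \ref{DSt_JID}(a) for $x''=x'^*$, the fact that (DSt) implies (e) (noted right after the definition of $\mathbf{DSt}$-algebras), axiom (c) of $\mathbf{DQD}$, and the p-algebra law $(a\lor b)^*=a^*\land b^*$, which is valid in any distributive pseudocomplemented lattice and hence in any semi-Heyting algebra. Your key reductions --- complementarity of $x'$ and $x'^*$, hence $x'^{**}=x'$ and $x'=(x'')^*$, after which both claimed equalities follow by substitution --- are all sound. It is worth knowing, however, that your route is necessarily different from what the paper prints, because the paper's proof of Lemma \ref{DSt_Lemma} does not in fact prove the stated identity: it is the one-line computation showing $x \land (x \to y)'^* = x \land y \land (x \to y)'^*$ (i.e., $x \land (x\to y)'^* \leq y$), an identity about $\to$ of the kind needed to feed the discriminator-variety machinery of \cite{Sa11}, and it appears to have been attached to this lemma by mistake. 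The intended justification of the displayed identity is presumably the one hinted at in the remark before Theorem \ref{DSt_JID}, namely that $(x\lor y)'=x'\land y'$ is the order-dual of the well-known De Morgan property of Stone algebras --- exactly the alternative you mention in your closing parenthesis. So your write-up actually supplies a self-contained argument where the paper, as printed, has a mismatched proof; the only economy you could gain is to note that once $x'=(x'')^*$ and $x''=x'^*$ are in hand, the two equalities are simultaneous consequences of applying $(a\lor b)^*=a^*\land b^*$ to $a=x''$, $b=y''$ and using $(x\lor y)''=x''\lor y''$.
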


\begin{proof}
 Since $(x \to y)'^* \leq x \to y$, we have $x \land (x \to y)'^* = x \land (x \to y) \land (x \to y)'^* = x \land y \land (x \to y)'^*$, proving 
 the lemma.
\end{proof}

The following corollary is immediate from Lemma \ref{DSt_Lemma}, 
Theorem \ref{DSt_JID} and   ~\cite[Corollary 8.2(a)]{Sa11}.  

\begin{Corollary} \label{DSt_DISCR}
$ \mathbf{DSt}$ is a discriminator variety of level \rm 1.  \\       
\end{Corollary}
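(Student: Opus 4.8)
The plan is to invoke the general criterion \cite[Corollary 8.2(a)]{Sa11}, exactly as was done to obtain Corollary \ref{Discr}. That criterion tells us that a subvariety of $\mathbf{DQD}$ satisfying the $\lor$-De Morgan law together with the level-$n$ identity is a discriminator variety of level $n$; equivalently, it suffices to show that $\mathbf{DSt} \subseteq \mathbf{Dms_1}$. So the whole task reduces to verifying two membership facts: that $\mathbf{DSt}$ satisfies the $\lor$-De Morgan law, and that $\mathbf{DSt}$ satisfies (Lev 1).

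First I would observe that the $\lor$-De Morgan law $(x \lor y)' \approx x' \land y'$ is precisely the right-hand conclusion of Lemma \ref{DSt_Lemma}, so $\mathbf{DSt} \subseteq \mathbf{Dms}$ with no further work. Next, the level-1 identity (Lev 1) holds throughout $\mathbf{DSt}$ by Theorem \ref{DSt_JID}(b), which in turn rests on the Stone-type identity $x'' \approx x'^*$ of Theorem \ref{DSt_JID}(a). Combining these two facts gives $\mathbf{DSt} \subseteq \mathbf{Dms_1}$, and then applying \cite[Corollary 8.2(a)]{Sa11} with $n = 1$ yields at once that $\mathbf{DSt}$ is a discriminator variety of level 1, completing the proof.

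The step carrying the real weight is not this corollary but its two inputs, Lemma \ref{DSt_Lemma} and Theorem \ref{DSt_JID}; once those are in hand the deduction is purely a matter of matching the hypotheses of the cited criterion, and in that sense the result is genuinely \emph{immediate}. The only point needing mild care is confirming that the identity recorded in Theorem \ref{DSt_JID}(b) is literally the (lev $1$) identity required by \cite[Corollary 8.2(a)]{Sa11}; since Section \ref{SB} fixes (lev $n$) and the equivalent (Lev $n$) reformulation has already been justified, this identification is routine and poses no real obstacle.
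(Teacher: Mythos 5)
Your proposal matches the paper's own proof exactly: the paper declares the corollary ``immediate from Lemma \ref{DSt_Lemma}, Theorem \ref{DSt_JID} and \cite[Corollary 8.2(a)]{Sa11}'', which is precisely your combination of the $\lor$-De Morgan law, the (Lev 1) identity, and the cited discriminator criterion. Your write-up just makes explicit the hypothesis-matching that the paper leaves implicit, so it is correct and essentially identical in approach.
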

\ \\
\subsection{The variety $\mathbf{DStHC}$}
\ \\ \ \\
Recall that $\mathbf{DStHC}$ is the variety generated by dually Stone Heyting chains.   
It follows from Lemma \ref{DSt_Lemma} 
that $\mathbf{DSt}$ satisfies (B).  The following corollary is, therefore, immediate from Theorem \ref{DSt_JID}(b) and 
Theorem \ref{Theorem_BDQD}. 

\begin{Corollary}\label{Cor_DSt}
Let $\mathbf{L} \in \mathbf{DSt}$ with $|L| > 2$.  Then the following are equivalent:
\begin{enumerate}
\item[(1)] ${\mathbf L}$ is simple,
\item[(2)]  $\mathbf{L}$ is subdirectly irreducible,
\item[(3)] $\mathbf{L}$ satisfies {\rm (SC)}.
\end{enumerate}
\end{Corollary}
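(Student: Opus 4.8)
The plan is to recognize this corollary as a direct instance of Theorem~\ref{Theorem_BDQD}, whose hypothesis is membership in $\mathbf{BDQD_1}$ together with $|L| \geq 2$. Since $|L| > 2$ certainly gives $|L| \geq 2$, the only genuine task is to verify the inclusion $\mathbf{DSt} \subseteq \mathbf{BDQD_1}$; once this is in hand, the asserted equivalence of simplicity, subdirect irreducibility, and (SC) is read off immediately from Theorem~\ref{Theorem_BDQD}.

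To establish $\mathbf{DSt} \subseteq \mathbf{BDQD_1}$, recall that $\mathbf{BDQD_1} = \mathbf{BDQD} \cap \mathbf{DQD_1}$, so I would check the two defining conditions separately. First, for membership in $\mathbf{BDQD}$ I would verify the identity (B), namely $(x \lor y^*)' \approx x' \land y{^*}'$. By Lemma~\ref{DSt_Lemma}, every $\mathbf{DSt}$-algebra satisfies the $\lor$-De Morgan law $(x \lor y)' = x' \land y'$; substituting $y^*$ for $y$ yields (B) at once. Second, membership in $\mathbf{DQD_1}$ is exactly the assertion that $\mathbf{DSt} \models {\rm(Lev\ 1)}$, which is precisely Theorem~\ref{DSt_JID}(b). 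Combining these two facts gives $\mathbf{DSt} \subseteq \mathbf{BDQD} \cap \mathbf{DQD_1} = \mathbf{BDQD_1}$.

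With the inclusion secured, I would finish by applying Theorem~\ref{Theorem_BDQD} to any $\mathbf{L} \in \mathbf{DSt}$ with $|L| > 2$: such an $\mathbf{L}$ lies in $\mathbf{BDQD_1}$ and satisfies $|L| \geq 2$, so conditions (1), (2), and (3) are equivalent. There is no substantive obstacle here—the entire content of the corollary is already packaged in the cited results—and the only point requiring a moment's care is confirming that the $\lor$-De Morgan law of Lemma~\ref{DSt_Lemma} really does specialize to the precise form of (B), which it does via the substitution $y \mapsto y^*$.
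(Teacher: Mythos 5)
Your proposal is correct and follows exactly the paper's own route: the paper likewise observes that Lemma~\ref{DSt_Lemma} gives (B) for $\mathbf{DSt}$ (your substitution $y \mapsto y^*$ just makes this explicit), invokes Theorem~\ref{DSt_JID}(b) for (Lev~1), and then reads off the equivalence from Theorem~\ref{Theorem_BDQD}. No gaps; your write-up simply spells out details the paper leaves implicit.
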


We now give an application of Corollary \ref{Cor_DSt}.
For $n \in \mathbb{N}$, let $\mathbf{C_n^{dp}}$ denote the n-element $\mathbf {DStH}$-chain  (= $\mathbf {DPCH}$-chain) and $\mathbf {V(C_n^{dp})}$ denotes the variety generated by
$\mathbf{C_n^{dp}}$.  (Note that $\mathbf {C_3^{dp}}$ was denoted by $\mathbf {L_1^{dp}}$ in \cite{Sa11}.)  

Recall from Corollary \ref{Cor_DStC} that $\mathbf {DPCHC} =\mathbf{DStHC}$.
The following theorem was implicit in \cite[Section 13]{Sa11}.

\begin{Theorem} \label{Subvarlat_DStHC}
The lattice of subvarieties of $\mathbf {DStHC}$ is the following $\omega + 1$-chain:
$$\mathbf{V(C_1^{dp})} < \mathbf{V(C_2^{dp})} < \dots <\mathbf{V(C_n^{dp})}< \dots < \mathbf {DStHC}.$$
\end{Theorem}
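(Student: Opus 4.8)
The plan is to establish the chain structure by combining a classification of the subdirectly irreducible members of $\mathbf{DStHC}$ with J\'onsson's Lemma. The key structural fact, available from Corollary~\ref{DSt_DISCR}, is that $\mathbf{DStHC}$ (being a subvariety of the discriminator variety $\mathbf{DSt}$) is congruence-distributive, so J\'onsson's Lemma applies and every subvariety is determined by which subdirectly irreducible algebras it contains. First I would identify the subdirectly irreducibles. Each $\mathbf{C_n^{dp}}$ is a finite dually Stone Heyting chain, hence simple by Corollary~\ref{Cor_DSt} once one checks it satisfies (SC): indeed, as observed in the proof of Lemma~\ref{DPCSHC_JID}, in a $\mathbf{DPC}$-chain $a' = 1$ whenever $a \neq 1$, so $a'^* = 1^* = 0$ and thus $a \wedge a'^* = 0$, giving (SC). Therefore the $\mathbf{C_n^{dp}}$ are precisely the finite simple algebras in $\mathbf{DStHC}$, and since $\mathbf{DStHC}$ is generated by chains and these chains are (up to isomorphism) exhausted by the $\mathbf{C_n^{dp}}$ for $n \in \mathbb{N}$, the finite subdirectly irreducibles of $\mathbf{DStHC}$ are exactly $\{\mathbf{C_n^{dp}} : n \in \mathbb{N}\}$.

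Next I would record the inclusions $\mathbf{V(C_n^{dp})} \subseteq \mathbf{V(C_{n+1}^{dp})}$ by exhibiting $\mathbf{C_n^{dp}}$ as a subalgebra (equivalently, a homomorphic image) of $\mathbf{C_{n+1}^{dp}}$: the underlying Heyting chain $\mathbf{C_n}$ embeds into $\mathbf{C_{n+1}}$, and because the dual Stone operation sends every non-top element to $1$ in both algebras, this embedding respects $'$. Hence $\mathbf{C_n^{dp}} \in \mathbf{V(C_{n+1}^{dp})}$, and the union $\bigcup_n \mathbf{V(C_n^{dp})}$ sits below $\mathbf{DStHC}$, which is the variety generated by all finite dually Stone Heyting chains. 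To see that the inclusions are strict, I would use an identity separating consecutive levels; the standard G\"odel-chain identities distinguishing $\mathbf{C_n}$ from $\mathbf{C_{n+1}}$ (valid in $\mathbf{C_n^{dp}}$ but failing in $\mathbf{C_{n+1}^{dp}}$) do this, since the $'$-operation is fixed and uniform. Finally $\mathbf{DStHC}$ strictly contains every $\mathbf{V(C_n^{dp})}$ because $\mathbf{C_{n+1}^{dp}} \in \mathbf{DStHC} \setminus \mathbf{V(C_n^{dp})}$.

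It then remains to check that this chain is \emph{complete} as a list of subvarieties. Let $\mathbf{W} \subseteq \mathbf{DStHC}$ be any subvariety. By J\'onsson's Lemma, its subdirectly irreducibles lie in $HS P_U(\{\mathbf{C_n^{dp}}\})$, and a standard analysis of ultraproducts of the chains shows these are again among the $\mathbf{C_n^{dp}}$ (an ultraproduct of the $\mathbf{C_n^{dp}}$ of unboundedly increasing size is an infinite dually Stone Heyting chain, but its proper subdirectly irreducible quotients and subalgebras reduce back to finite $\mathbf{C_m^{dp}}$). So $\mathbf{W}$ is generated by some set $S \subseteq \{\mathbf{C_n^{dp}} : n\}$. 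If $S$ is bounded with maximal index $n$, then $\mathbf{W} = \mathbf{V(C_n^{dp})}$ using the inclusions above; if $S$ is unbounded, then $\mathbf{W} = \mathbf{DStHC}$. This yields exactly the displayed $\omega+1$-chain.

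I expect the main obstacle to be the completeness argument in the last paragraph: controlling the ultraproducts in the J\'onsson analysis and verifying that no ``new'' subdirectly irreducible chain arises. The cleanest route, as the remark that the result ``was implicit in \cite[Section 13]{Sa11}'' suggests, is to invoke the already-established classification of subdirectly irreducibles there rather than to re-derive the ultraproduct computation, so the essential work reduces to the (routine) strict-inclusion identities and the embeddings $\mathbf{C_n^{dp}} \hookrightarrow \mathbf{C_{n+1}^{dp}}$.
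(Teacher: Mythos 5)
Your proposal follows the same general strategy as the paper---classify the subdirectly irreducibles of $\mathbf{DStHC}$ and reduce every subvariety to one generated by finite chains---but the completeness step contains a genuine gap, and it sits exactly where the real content of the theorem lies. You apply J\'onsson's Lemma to the class $HS P_U(\{\mathbf{C_n^{dp}}\})$, which presupposes $\mathbf{DStHC} = \mathbf{V}(\{\mathbf{C_n^{dp}} : n \in \mathbb{N}\})$; but $\mathbf{DStHC}$ is by definition generated by \emph{all} dually Stone Heyting chains, infinite ones included, so this equality is precisely what must be proved, not assumed (the same conflation occurs in your first paragraph, where you assert that the generating chains are ``exhausted by the $\mathbf{C_n^{dp}}$''). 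Worse, the ultraproduct claim you lean on is false: an ultraproduct of $\mathbf{C_n^{dp}}$'s of unbounded size is an infinite $\mathbf{DStH}$-chain, and such a chain is itself \emph{simple} in $\mathbf{DStHC}$---it satisfies {\rm(SC)} because $x'=1$ for every $x \neq 1$ in any $\mathbf{DStH}$-chain---hence subdirectly irreducible; its SI subalgebras and quotients do not ``reduce back'' to finite chains. So a subvariety $\mathbf{W} \subseteq \mathbf{DStHC}$ may perfectly well be generated by infinite chains, and your bounded/unbounded dichotomy over a set $S \subseteq \{\mathbf{C_n^{dp}}\}$ does not cover this case.

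The missing idea---and the crux of the paper's proof---is the observation that an identity failing in an infinite $\mathbf{DStH}$-chain already fails in a finite one: in a $\mathbf{DStH}$-chain one has $a \to b \in \{1,b\}$ and $a' \in \{0,1\}$, so any finite subset containing $0$ and $1$ is a subalgebra, and a failing evaluation, involving only finitely many elements, lives inside such a finite subchain. This at once yields $\mathbf{DStHC} = \mathbf{V}(\{\mathbf{C_n^{dp}} : n \in \mathbb{N}\})$ and settles the problematic case: a subvariety containing an infinite chain contains every $\mathbf{C_n^{dp}}$ as a subalgebra and is therefore all of $\mathbf{DStHC}$. Note also that the paper obtains the classification of \emph{all} subdirectly irreducibles (finite or not) with no ultraproduct analysis at all: by Corollary \ref{Cor_DSt} the SI members of $\mathbf{DStHC}$ are exactly those satisfying {\rm(SC)}; by Theorem \ref{DSt_JID}(a) this forces $a'=1$ for all $a \neq 1$; and prelinearity, which $\mathbf{DStHC}$ inherits from its generating chains, together with the $\lor$-De Morgan law of Lemma \ref{DSt_Lemma}, then forces such an algebra to be a chain. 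Your remaining steps---simplicity of the $\mathbf{C_n^{dp}}$ via {\rm(SC)}, the embeddings $\mathbf{C_m^{dp}} \leq \mathbf{C_n^{dp}}$ for $m \leq n$, and strictness of the inclusions---are fine; but merely citing \cite[Section 13]{Sa11}, as your last paragraph proposes, would outsource the gap rather than fill it.
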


\begin{proof}
Let $\mathbf{C^{dp}}$ be a $\mathbf {DStC}$-chain.  
Since $x=1$ or $x'=1$, for every $x \in C$, it is clear that  $\mathbf{C^{dp}}$ satisfies (SC).  On the other hand, let $\mathbf{A} \in \mathbf {DStHC}$ satisfy (SC).
Let $a \in A \setminus \{1\}$.  By Theorem \ref{DSt_JID} (a) we have $a'^* \leq a$; hence by (SC), we get 
$a'^*=0$, implying $a' = 1$, again by Theorem \ref{DSt_JID} (a).  Since each $\mathbf {DStHC}$-chain, 
being a Heyting-chain, satisfies the identity $(x \to y) \lor (y \to x) \approx 1$, it follows that $\mathbf {DStHC}$ 
satisfies it too, implying that any two elements of $\mathbf{A}$ are comparable in $\mathbf{A}$, so $\mathbf {A}$ is 
a $\mathbf {DStH}$-chain.
Thus, $\mathbf {A} \in \mathbf {DStHC}$ is subdirectly irreducible iff $\mathbf {A}$ is a $\mathbf {DStH}$-chain.   Now it is not hard to observe that if an identity fails in an infinite $\mathbf {DStHC}$-chain, then it fails in a finite $\mathbf {DStHC}$-chain.  Thus 
$\mathbf {DStHC}$ is generated by finite $\mathbf {DStHC}$-chains.  Hence, the conclusion follows.         
\end{proof}

A similar argument can be used to prove the following theorem.

\begin{Theorem}
$ \mathbf {DStHC} = \mathbf {DStL}.$  
\end{Theorem}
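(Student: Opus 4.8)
The final theorem asserts
$$\mathbf{DStHC} = \mathbf{DStL},$$
where $\mathbf{DStL}$ is (by the naming convention established in the paper) the variety of dually Stone semi-Heyting algebras satisfying the linear identity $(x \to y) \lor (y \to x) \approx 1$. The preceding theorem (Theorem~\ref{Subvarlat_DStHC}) handles the chain-generated variety $\mathbf{DStHC}$ by identifying its subdirectly irreducibles, and the sentence ``A similar argument can be used'' signals that the same machinery applies here.

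The plan is to prove the two inclusions of the equality by comparing the classes of subdirectly irreducible algebras on each side. First I would show $\mathbf{DStHC} \subseteq \mathbf{DStL}$: every $\mathbf{DStH}$-chain is a Heyting chain, hence satisfies the linearity identity $(x \to y) \lor (y \to x) \approx 1$, and it clearly lies in $\mathbf{DSt}$ with a Heyting reduct, so it belongs to $\mathbf{DStL}$; since $\mathbf{DStL}$ is a variety (closed under $\mathbf{H}$, $\mathbf{S}$, $\mathbf{P}$) and $\mathbf{DStHC}$ is generated by such chains, the inclusion follows. The reverse inclusion $\mathbf{DStL} \subseteq \mathbf{DStHC}$ is the substantive direction and is proved via subdirect irreducibles: it suffices to show that every subdirectly irreducible algebra $\mathbf{A} \in \mathbf{DStL}$ is a $\mathbf{DStH}$-chain, so that $\mathbf{A} \in \mathbf{DStHC}$, and then invoke that every variety is generated by its subdirectly irreducible members.

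For that reverse direction I would reuse the argument in the proof of Theorem~\ref{Subvarlat_DStHC} almost verbatim. Let $\mathbf{A} \in \mathbf{DStL}$ be subdirectly irreducible with $|A| > 2$. By Corollary~\ref{Cor_DSt}, subdirect irreducibility is equivalent to the condition (SC); so $\mathbf{A}$ satisfies (SC). As in the earlier proof, for $a \in A \setminus \{1\}$, Theorem~\ref{DSt_JID}(a) gives $a'^* \le a$, whence (SC) forces $a'^* = 0$ and thus $a' = 1$ (again by Theorem~\ref{DSt_JID}(a)). The linearity identity $(x \to y) \lor (y \to x) \approx 1$, which holds in $\mathbf{A}$ by hypothesis, then makes any two elements comparable, so the lattice reduct of $\mathbf{A}$ is a chain; combined with the Heyting reduct (implicit in $\mathbf{DStL}$ being a variety of Heyting-type algebras) this shows $\mathbf{A}$ is a $\mathbf{DStH}$-chain. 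Hence every subdirectly irreducible member of $\mathbf{DStL}$ is a $\mathbf{DStH}$-chain, giving $\mathbf{DStL} \subseteq \mathbf{DStHC}$.

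The main obstacle, and the point requiring the most care, is the exact definition of $\mathbf{DStL}$, since the symbol is not formally introduced in the excerpt: I must confirm that $\mathbf{DStL}$ denotes $\mathbf{DStH}$ together with the linear identity (i.e.\ that the underlying semi-Heyting structure is genuinely Heyting and that linearity is the distinguishing axiom), because the argument uses both the Heyting condition (H) and linearity to conclude the subdirectly irreducibles are Heyting chains. A secondary subtlety is the passage from ``every s.i.\ algebra is a $\mathbf{DStH}$-chain'' to the class equality: this is the standard fact that a variety equals the variety generated by its subdirectly irreducible algebras, so I would simply note $\mathbf{DStL} = \mathbf{V}(\mathbf{DStL}_{SI}) \subseteq \mathbf{V}(\{\mathbf{DStH}\text{-chains}\}) = \mathbf{DStHC}$, with the final equality holding since every $\mathbf{DStH}$-chain lies in $\mathbf{DStHC}$ (indeed generates it). No reduction to finite chains is needed here, unlike in Theorem~\ref{Subvarlat_DStHC}, since we are proving a class equality rather than describing the subvariety lattice.
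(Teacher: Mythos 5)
Your overall strategy---prove both inclusions by comparing subdirectly irreducible algebras, reusing the argument of Theorem \ref{Subvarlat_DStHC}---is exactly what the paper intends: the paper gives no explicit proof of this theorem, saying only that ``a similar argument can be used.'' Your forward inclusion and the skeleton of the reverse one (Corollary \ref{Cor_DSt} to trade subdirect irreducibility for {\rm(SC)}, Theorem \ref{DSt_JID}(a) to get $a'=1$ for $a \neq 1$, then linearity to get comparability) match that intended argument.

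There is, however, one genuine gap, located precisely at the point you flagged as uncertain: the definition of $\mathbf{DStL}$. By the paper's conventions (the letter ``H'' is appended to a variety's name exactly when Heyting reducts are assumed, and $\mathbf{DQDL}$, $\mathbf{JIDL}$ are defined as the subvarieties of $\mathbf{DQD}$, $\mathbf{JID}$ cut out by {\rm(L)}), $\mathbf{DStL}$ is the subvariety of $\mathbf{DSt}$---with \emph{semi-Heyting} reducts---defined by {\rm(L)}; the Heyting condition {\rm(H)} is not part of the definition. As written, your reverse inclusion shows only that a subdirectly irreducible member of $\mathbf{DStL}$ is a dually Stone \emph{semi-Heyting} chain, and semi-Heyting chains need not be Heyting (the paper's example $\mathbf{\bar{2}^e} \in \mathbf{DStC} \setminus \mathbf{DStHC}$ makes this point). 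The missing step is supplied by Proposition \ref{PH} (that is, \cite[Lemma 12.1(f)]{Sa11}): every linear semi-Heyting algebra satisfies {\rm(H)}, so your subdirectly irreducible chain is indeed a $\mathbf{DStH}$-chain and lies in $\mathbf{DStHC}$. With that one citation your proof is complete and the theorem is proved in its intended strength (a priori $\mathbf{DSt} + {\rm(L)}$ is larger than $\mathbf{DStH} + {\rm(L)}$). Two smaller remarks: the step ``linearity makes any two elements comparable'' tacitly uses that $1$ is join-irreducible in these algebras, which follows from $a'=1$ for $a \neq 1$ together with the $\lor$-De Morgan law of Lemma \ref{DSt_Lemma} (if $u, v \neq 1$ then $(u \lor v)' = u' \land v' = 1 \neq 0$, so $u \lor v \neq 1$, and then {\rm(SH1)} turns $u \to v = 1$ into $u \leq v$); the paper is equally terse on this in Theorem \ref{Subvarlat_DStHC}. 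Also, Corollary \ref{Cor_DSt} applies only when $|L| > 2$, but the two-element algebras of $\mathbf{DStL}$ are $\mathbf{DStH}$-chains (the non-Heyting two-element algebra fails {\rm(L)}), so nothing is lost.
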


Note, however, that if we consider $\mathbf{DStC}$-chains with semi-Heyting reducts that are 
not Heyting algebras, the situation gets complicated since the structure of the lattice of subvarieties of $ \mathbf {DStC}$ is quite complex,  as shown by the following class of examples: Let $A$ be a semi-Heyting algebra.  Let $A^e$ be the expansion of A by adding a unary operation $'$ as follows: 
$$x'=0, \text{ if  } x=1, \text{ and  } x'=1, \text{ otherwise}. $$ 
Then it is clear that $A^e$ is a $\mathbf{DSt}$-algebra and is simple.   In particular, if $A$ is a semi-Heyting-chain, then $A^e \in  \mathbf {DStC}$ and is simple.   Furthermore, the number of semi-Heyting chains even for small size is large; for example, there are 160 semi-Heyting chains of size 4. 
It is interesting to observe that $\mathbf{\bar{2}^e} \in \mathbf{DStC} \setminus  \mathbf{DStHC}$, and $ \mathbf{DStHC}$ is only a "small" subvariety of $\mathbf{DStC}$.  These observations suggest that the following problem is of interest:\\

{\bf Problem: Investigate the structure of the lattice of subvarieties of $\mathbf{DStC}$.}

\vspace{1cm}
\section{Subdirectly Irreducible Algebras in $\mathbf {JID_1}$} 

Recall that the variety $\mathbf{JID_1}$ 
is the subvriety of $\mathbf{JID}$ defined by
\begin{center}
\hspace{-4cm} {\rm(Lev $1$)}  \qquad  $x \land x'^*  
\approx (x \land x'^*)^{{'^*}}$.
\end{center}

In this section we give a characterization of subdirectly irreducible (=simple) algebras in the variety $\mathbf{JID_1}$.  
Such a characterization will be obtained as an application of Theorem \ref{Theorem_BDQD}.

The following theorem   
follows immediately
from Theorem \ref{DmsSH} and Theorem \ref{Theorem_BDQD}.
\begin{Theorem} \label{Corollary_4C}
Let $\mathbf{L} \in \mathbf {JID_1}$ with $|L| > 2$.  Then the following are equivalent:
\begin{enumerate}
\item[(1)] ${\mathbf L}$ is simple,
\item[(2)]  $\mathbf{L}$ is subdirectly irreducible,
\item[(3)] $\mathbf{L}$ satisfies {\rm (SC)}. \label{3}
\end{enumerate}
\end{Theorem}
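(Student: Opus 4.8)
The plan is to recognize this statement as a direct specialization of Theorem \ref{Theorem_BDQD}, whose hypothesis requires only that $\mathbf{L}$ lie in $\mathbf{BDQD_1}$ with at least two elements. Consequently the entire task reduces to verifying the inclusion $\mathbf{JID_1} \subseteq \mathbf{BDQD_1}$; once that is in hand, the three-way equivalence $(1) \Leftrightarrow (2) \Leftrightarrow (3)$ is read off verbatim from Theorem \ref{Theorem_BDQD}, noting that the present cardinality hypothesis $|L| > 2$ is strictly stronger than the $|L| \geq 2$ required there, so the cited theorem applies a fortiori.

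To establish $\mathbf{JID_1} \subseteq \mathbf{BDQD_1}$, I would first peel off the level-1 condition: by definition $\mathbf{JID_1} = \mathbf{JID} \cap \mathbf{DQD_1}$ and $\mathbf{BDQD_1} = \mathbf{BDQD} \cap \mathbf{DQD_1}$, so it suffices to prove $\mathbf{JID} \subseteq \mathbf{BDQD}$, the common factor $\mathbf{DQD_1}$ transferring automatically. For the latter inclusion, Theorem \ref{DmsSH} (equivalently Theorem \ref{Theorem_105}) gives $\mathbf{JID} \subseteq \mathbf{Dms}$, so every algebra in $\mathbf{JID}$ satisfies the full $\lor$-De Morgan law $(x \lor y)' \approx x' \land y'$. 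The defining identity (B) of $\mathbf{BDQD}$, namely $(x \lor y^*)' \approx x' \land y{^*}'$, is then obtained at once by substituting $y \mapsto y^*$ in the $\lor$-De Morgan law; since $\mathbf{Dms}$ is a subvariety of $\mathbf{DQD}$ that satisfies (B), we conclude $\mathbf{Dms} \subseteq \mathbf{BDQD}$, and hence $\mathbf{JID} \subseteq \mathbf{BDQD}$, as required.

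There is essentially no obstacle here: the substance is packaged entirely in the two cited theorems, and the only genuinely new observation is the one-line substitution that derives the restricted identity (B) from the unrestricted $\lor$-De Morgan law. The only points needing care are the bookkeeping of the level-1 indices -- ensuring that the ``level $1$'' hypothesis built into $\mathbf{JID_1}$ is precisely the $\mathbf{DQD_1}$ membership needed to land in $\mathbf{BDQD_1}$ -- and the check that the cardinality hypotheses are compatible, so that Theorem \ref{Theorem_BDQD} may be invoked without modification.
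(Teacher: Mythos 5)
Your proposal is correct and follows essentially the same route as the paper, which simply states that the theorem ``follows immediately from Theorem \ref{DmsSH} and Theorem \ref{Theorem_BDQD}''; your write-up just makes explicit the routine links the paper leaves implicit, namely that $\mathbf{JID} \subseteq \mathbf{Dms} \subseteq \mathbf{BDQD}$ (the last inclusion via the substitution $y \mapsto y^*$ in the $\lor$-De Morgan law) and that the level-$1$ and cardinality hypotheses transfer.
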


We now wish to characterize the subdirectly irreducible algebras in $\mathbf {JID_1}$.   In  view of the above theorem, it suffices to characterize the algebras in $\mathbf {JID_1}$ satisfying the condition (SC). 

{\bf Unless otherwise stated, in the rest of this section we assume that $\mathbf{L} \in 
\mathbf {JID_1}$ with $|L| \geq 2$ and satisfies
the simplicity condition (SC). }

\begin{Lemma}\label{Lemma_3.232}  Let $a, b \in L$ such that 
$a'=a$.  
Then
 \[ a \lor b \lor b^* = 1.\]   
\end{Lemma}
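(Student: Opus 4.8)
We work under the standing assumptions: $\mathbf{L} \in \mathbf{JID_1}$, $|L| \geq 2$, and $\mathbf{L}$ satisfies (SC). We are given $a, b \in L$ with $a' = a$, and we must show $a \lor b \lor b^* = 1$. The plan is to exploit (SC), which says that any element $x \neq 1$ satisfies $x \land x'^* = 0$, together with the special structure forced by the hypothesis $a' = a$. The key preliminary observation is what the condition $a' = a$ tells us about the element $a$: applying the relevant arithmetical identities, I would first pin down the value of $a'^*$ and of $a \land a'^*$, since these govern whether (SC) can be applied to $a$ itself.

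\textbf{First steps.} I would begin by computing $a \land a'^*$. Since $a' = a$, we have $a'^* = a^*$, so $a \land a'^* = a \land a^*$. Now $a \land a^* = a \land (a \to 0)$, and by (SH1) this equals $a \land 0 = 0$. Thus $a \land a'^* = 0$ automatically, \emph{regardless} of whether $a = 1$. This is the crucial structural fact: the element $a$ behaves like a ``simplicity witness'' for free. Consequently, for the target identity it is natural to set $t := a \lor b \lor b^*$ and try to show $t = 1$ by showing that its negation-type behavior forces it, i.e.\ by applying (SC) to an appropriate element and deriving a contradiction unless $t = 1$.

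\textbf{Main line of argument.} I would aim to show directly that $a \lor b \lor b^* = 1$ by considering the element $c := b \land b^*$ or, more promisingly, by analyzing the complement structure. Note $b \land b^* = 0$ always (by (SH1) as above), so $b$ and $b^*$ are ``disjoint.'' The natural strategy is: suppose $t = a \lor b \lor b^* \neq 1$; then by (SC), $t \land t'^* = 0$. I would compute $t'^*$ using the $\lor$-De Morgan law from Theorem~\ref{Theorem_105} ($\mathbf{JID} \subseteq \mathbf{Dms}$), which gives $t' = (a \lor b \lor b^*)' = a' \land b' \land b^{*\prime} = a \land b' \land b^{*\prime}$. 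Then $t'^* = (a \land b' \land b^{*\prime})^*$, which I would expand using Lemma~\ref{2.2}(iii) and Lemma~\ref{E}(4) to re-express the star of a meet as a join involving primes. The goal is to show $t'^* \geq$ something forcing $t \land t'^* \neq 0$ unless $t = 1$, contradicting (SC).

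\textbf{Anticipated obstacle.} The delicate part will be controlling the interaction of the two ``disjoint'' pieces $b$ and $b^*$ with the fixed point $a = a'$. The cleanest route is probably to use Lemma~\ref{L1}(\ref{107}), namely $x' \lor x'^* = 1$, applied with a suitable substitution, combined with $a' = a$. Specifically, I expect the computation
\[
a \lor b \lor b^* \;\geq\; a \lor a'^* \;=\; a' \lor a'^* \;=\; 1,
\]
to be the heart of the matter, provided one can show $b \lor b^* \geq a'^*$ or reduce to it; but since this is not obviously true, the real work is showing that the primed element $a$ absorbs enough. The main obstacle is therefore establishing the right identity linking $b \lor b^*$ to the star operation so that the $\lor$-De Morgan law and Lemma~\ref{L1}(\ref{107}) (giving $a' \lor a'^* = 1$) can be combined to collapse $t$ to $1$; I would expect to need Lemma~\ref{E}(4) to rewrite $(a \land b' \land b^{*\prime})^*$ and then Lemma~\ref{2.3}(7) or the simplicity-driven identity $x \land x'^* = 0$ to finish, extracting $t = 1$ from $t \land t'^* = 0$.
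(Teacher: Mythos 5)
Your proposal has a genuine gap, and you in fact flag it yourself: the whole argument hinges on ``establishing the right identity linking $b \lor b^*$ to the star operation,'' which you leave open. Neither of your two proposed routes closes it. The direct route --- assume $t := a \lor b \lor b^* \neq 1$, get $t \land t'^* = 0$ from (SC), and compute $t'^*$ from $t' = a \land b' \land b^{*\prime}$ --- has no visible way to produce a contradiction: (SC) only asserts a disjointness for every non-unit element, so learning that $t \land t'^* = 0$ is not in itself contradictory, and nothing in an expansion of $(a \land b' \land b^{*\prime})^*$ forces $t \land t'^* > 0$. Your fallback inequality $b \lor b^* \geq a'^*$ is essentially as hard as the lemma itself and is not obtainable from the lemmas you cite; you correctly sense this but offer no substitute.

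The paper's proof supplies exactly the missing maneuver, and it is where the hypothesis $a' = a$ actually does its work. Apply (SC) not to $t$ but to $a \lor b$ (the case $a \lor b = 1$ being trivial). Two consequences of (JID) combined with $a' = a$ are needed: first, by Lemma \ref{L1}(4), $(a \lor b) \to a = a \lor b^*$; second,
$a \lor (a \lor b)'^* = [a' \lor (a \lor b)'] \to (a' \lor 0) = a' \to a' = 1$,
using $(a \lor b)' \leq a'$. Then, writing $a = a \lor (b \land b^*) = (a \lor b) \land (a \lor b^*)$, one inserts the term $(a \lor b)'^*$ into the second factor for free (its meet with $a \lor b$ is $0$ by (SC)), replaces $a \lor b^*$ by $(a \lor b) \to a$, absorbs that implication via Lemma \ref{2.3}(3), and uses $a \lor (a \lor b)'^* = 1$ to conclude $a = a \lor b$, i.e.\ $b \leq a$. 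Then $a \lor b^* = (a \lor b) \to a = a \to a = 1$, and the lemma follows. So the conclusion is reached not by contradiction from $t \land t'^* = 0$, but by proving the sharper dichotomy that either $a \lor b = 1$ or $b \leq a$ and $a \lor b^* = 1$. The preliminary observations you do make ($b \land b^* = 0$, the De Morgan computation of $t'$, and $x' \lor x'^* = 1$) are correct but do not bridge this gap.
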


 \begin{proof}
 From Lemma \ref{L1} (4) and $a'=a$, we have
\begin{equation} \label{EC1}
 (a \lor b) \to a = a \lor  b^*. 
\end{equation}
Now,
\begin{alignat*}{3}
a \lor (a \lor b)'^* &= a' \lor [(a \lor b)' \to 0] && \\   
&= [a' \lor (a \lor b)'] \to (a' \lor 0), && \text{ by  (JID)}\\                
&= a'  \to a'  && \text{ as $a' \geq (a \lor b)'$}\\                                                   
&= 1.  &&                   
\end{alignat*}
 Thus, we have
\begin{equation} \label{EC3}
a \lor (a \lor b)'^* =1. 
\end{equation}
If $a \lor b = 1$, then clearly the lemma is true.  So, we assume that
 $a \lor b \neq 1$.  Then $(a \lor b) \land (a \lor b)'^* = 0$ by (SC), and hence, we have
\begin{alignat*}{2}
a &= a \lor (b \land b^*) \\
&=  (a \lor b) \land (a \lor b^*)\\ 
&= (a \lor b) \land [(a \lor b)'^* \lor (a \lor b^*)] \\
&= (a \lor b) \land [(a \lor b)'^* \lor \{(a \lor b) \to a\}] &\text{by \eqref{EC1}} \\ 
&= (a \lor b) \land [\{(a \lor b) \to a\} \lor (a \lor b)'^*]\\
&= (a \lor b) \land [a \lor (a \lor b)'^*]   &\text{ by Lemma \ref{Lemma_3.70}} (3)\\ 
&= a \lor b  &\text{ by \eqref{EC3}}.   
\end{alignat*}
Hence, $a  \lor b =a$, 
which implies, by \eqref{EC1}, that $a \lor b^*  =1$.  
The conclusion of the lemma is now immediate.  
\end{proof}

\begin{Lemma}\label{Crucial_small_Lemma} 
Let $x \in L \setminus \{1 \}$.  Then\  $x \leq x' $.   
\end{Lemma}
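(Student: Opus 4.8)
The plan is to reduce the inequality $x \le x'$ to the equation $x = x \wedge x'$ and to produce that equation by a single meet-and-distribute computation, with the unwanted term $x \wedge x'^{*}$ eliminated by the simplicity condition.

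Concretely, the starting point is the identity $x' \vee x'^{*} = 1$, established in Lemma~\ref{L1}(3) for every algebra in $\mathbf{JID}$ (and hence in $\mathbf{JID_1}$). Meeting both sides with $x$ and using that the lattice reduct is distributive (recall semi-Heyting algebras are distributive) gives
\[
 x \;=\; x \wedge 1 \;=\; x \wedge (x' \vee x'^{*}) \;=\; (x \wedge x') \vee (x \wedge x'^{*}).
\]
Now the hypothesis $x \neq 1$ lets me invoke (SC), which yields $x \wedge x'^{*} = 0$; substituting this into the display collapses the right-hand side to $x \wedge x'$. Thus $x = x \wedge x'$, that is, $x \le x'$, as required.

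There is essentially no technical obstacle here: the whole content of the lemma is packaged into the two facts $x' \vee x'^{*} = 1$ and the simplicity condition (SC). The only things to confirm are that Lemma~\ref{L1}(3) is available in the present setting, which it is since $\mathbf{JID_1} \subseteq \mathbf{JID}$ and that lemma was proved for all of $\mathbf{JID}$, and that distributivity applies, which holds because $\mathbf{L}$ is a semi-Heyting algebra. It is worth noting that this short argument is exactly what makes the two eventual cases of the structure theorem cohere: in the dually Stone case one has $x' = 1$ for $x \neq 1$, while in the De Morgan Boolean case every non-top element is either $0$ or a fixed point of $'$, and in both situations $x \le x'$ holds, consistently with the lemma.
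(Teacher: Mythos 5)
Your proof is correct and follows essentially the same route as the paper's: both arguments combine Lemma \ref{L1}(3) ($x' \lor x'^{*} = 1$) with (SC) ($x \land x'^{*} = 0$ for $x \neq 1$) and one application of distributivity; the paper joins $x'$ over $0 = x \land x'^{*}$ to get $x' \lor x = x'$, while you meet $x$ into $1 = x' \lor x'^{*}$ to get $x = x \land x'$ --- dual forms of the identical computation.
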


\begin{proof}
 Since $x  \neq 1$, we have $x \land x'^* = 0$ by (SC), from which we get
$(x' \lor x) \land (x' \lor x'^*)= x'$, whence 
$x' \lor x =x'$, as $x' \lor x'^*=1$ by Lemma \ref{L1} (3) proving the lemma. 
\end{proof}

\begin{Lemma}\label{Theorem_height}  
Let 
$|L|>2$ and let $a \in L$ such that $a'=a$.  
 Then the height of $L$ is at most $2$.
\end{Lemma}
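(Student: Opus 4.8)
The plan is to prove much more than the stated bound: I will pin down the lattice of $\mathbf{L}$ completely, showing that its only elements besides $0$ and $1$ are two complementary fixed points of $'$, so that $\mathbf{L}$ is the four-element Boolean lattice and in particular has height $2$. The engine is a dichotomy for the negation. First I would establish: for every $x \neq 1$, either $x' = 1$ or $x = x'$. Indeed, Lemma \ref{Crucial_small_Lemma} gives $x \leq x'$; if moreover $x' \neq 1$, then applying Lemma \ref{Crucial_small_Lemma} to $x'$ yields $x' \leq x''$, while the axiom $x'' \leq x$ (item (d) of the definition of $\mathbf{DQD}$) gives $x'' \leq x$, so $x' \leq x'' \leq x \leq x'$ and hence $x = x' = x''$. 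Let me call $x$ \emph{flat} if $x' = 1$ (so $0$ is flat) and a \emph{fixed point} if $x = x'$; by the dichotomy every non-top element is of one of these two kinds.

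Next I would show that the given $a$ comes with a complementary partner that is again a fixed point. Since $a' = a$ we have $a'' = a$, so Lemma \ref{E}(2) gives $a^* = (a'')^* = ((a')^*)' = (a^*)'$, i.e.\ $a^*$ is itself a fixed point. Pseudocomplementation gives $a \wedge a^* = 0$, while Lemma \ref{L1}(3) applied to $a$ gives $a \vee a^* = a' \vee a'^* = 1$; thus $a$ and $a^*$ are complementary, which forces $a^* \neq a$ and $a^* \notin \{0,1\}$. Because semi-Heyting algebras are distributive, complements are unique, so any two distinct fixed points are complementary and therefore $\mathbf{L}$ has exactly two fixed points, namely $a$ and $a^*$.

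The crux—and the step I expect to be the main obstacle—is to eliminate all nontrivial flat elements by playing them off against a fixed point $f \in \{a, a^*\}$. For a flat $b \neq 1$, the $\vee$-De Morgan law (Theorem \ref{Theorem_105}) gives $(b \vee f)' = b' \wedge f' = 1 \wedge f = f$, which is neither $0$ nor $1$. Were $b \vee f = 1$, we would get $(b \vee f)' = 0 = f$, a contradiction; hence $b \vee f \neq 1$. Now the dichotomy applied to $b \vee f$—whose negation is $f \neq 1$, so it cannot be flat—forces $b \vee f$ to be a fixed point, giving $b \vee f = (b \vee f)' = f$, that is, $b \leq f$. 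Running this with both $f = a$ and $f = a^*$ yields $b \leq a \wedge a^* = 0$, so $b = 0$.

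Consequently the only flat element is $0$, and by the dichotomy every element of $\mathbf{L}$ lies in $\{0, a, a^*, 1\}$. This is the four-element Boolean lattice (with $a, a^*$ simultaneously atoms and coatoms), whose height is $2$, which proves the lemma. The delicate points to get right are the interaction of the dichotomy with the $\vee$-De Morgan identity, and confirming via Lemma \ref{E}(2) and Lemma \ref{L1}(3) that $a^*$ really is a \emph{second, distinct} fixed point; once those are secured, the collapse of every flat element to $0$ is immediate, and the height bound follows at once.
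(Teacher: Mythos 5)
Your route is genuinely different from the paper's: the paper supposes a chain $0 < b < c < 1$ and works to a contradiction (via Lemma \ref{Lemma_3.232}, a claim that $b' = 1$, and a claim that $b \lor b^* = 1$), whereas you attempt a full classification of $L$. Most of your steps are correct. Your dichotomy is exactly the paper's Lemma \ref{Helper} (which the paper proves only \emph{after} this lemma, but from Lemma \ref{Crucial_small_Lemma} and the axiom $x'' \leq x$ alone, so there is no circularity); your derivation that $a^*$ is a fixed point from Lemma \ref{E}(2) is a clean shortcut to the paper's Lemma \ref{LA}; and the step showing that every flat $b \neq 1$ lies below both $a$ and $a^*$, hence equals $0$, is correct.

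However, there is one genuine gap, precisely at the step your conclusion hinges on: ``Because semi-Heyting algebras are distributive, complements are unique, so any two distinct fixed points are complementary and therefore $\mathbf{L}$ has exactly two fixed points.'' Uniqueness of complements in a distributive lattice says that any element which \emph{is} a complement of $a$ must equal $a^*$; it does not say that an arbitrary fixed point is a complement of $a$, or even comparable to it in any controlled way. Nothing you have established rules out, for instance, two comparable fixed points $f < g$, which would give a chain $0 < f < g < 1$ and defeat the height bound: the dichotomy only sorts non-unit elements into flat or fixed, and your ``crux'' step constrains only the flat ones, so the fixed points other than $a, a^*$ are left entirely unconstrained. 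The repair is short and uses the same trick as your crux step, applied to a join of two fixed points: if $f \neq g$ are fixed points and $f \lor g \neq 1$, then Lemma \ref{Crucial_small_Lemma} and Theorem \ref{Theorem_105} give $f \lor g \leq (f \lor g)' = f' \land g' = f \land g$, forcing $f = g$; hence distinct fixed points satisfy $f \lor g = 1$, and then $f \land g = (f \lor g)' = 1' = 0$, so they are complementary, and only now does uniqueness of complements yield that every fixed point other than $a$ equals $a^*$. With that inserted, your argument is complete, and it in fact proves more than the lemma asks: $L = \{0, a, a^*, 1\}$ is the four-element Boolean lattice, which is essentially what the paper obtains later in Proposition \ref{Theorem_DQDBSH} by combining the present lemma with Lemma \ref{LA} and Proposition \ref{Prop_DQB}.
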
 

\begin{proof}
Suppose there are $b,c \in L$ such that $0 < b  <  c < 1$.  We wish to arrive at a contradiction.\\
From Lemma \ref{Crucial_small_Lemma} we have $c \leq c'$,  
from which it follows that  
\begin{equation} \label{eqJ}  
b \leq c'.
\end{equation}

{\bf Claim 1}:  $b'=1$.  

Suppose $b' \neq 1$.  Then, by Lemma \ref{Crucial_small_Lemma}, we get $b' \leq b'' \leq b \leq c$; thus $b' \leq c$.  Next, $b \leq c$ implies $c' \leq b'$; and also $c \leq c'$ from Lemma 
\ref{Crucial_small_Lemma}, whence $c \leq b'$.  Thus we conclude that $b'=c$, whence $c' =b'' \leq b$, implying $c'=b$, by \eqref{eqJ}.
Then, in view of % \eqref{eqJ} and 
Lemma \ref{Crucial_small_Lemma}. we have $c \leq c' = b$; thus $c \leq b$,  
which is a contradiction, proving the claim.

 From Lemma \ref{Lemma_3.232}  we have $a \lor b \lor b^*=1$.
Hence, $a' \land b' \land b{^*}' = 0$ by Theorem \ref{Theorem_105},  
implying $a  \land b{^*}'=0$ by {\bf Claim 1} and the hypothesis.    
Thus
\begin{equation} \label{eqL}     
a \land b{^*}' =0.
\end{equation}
Therefore, 
$a \lor b^* \geq a \lor b{^*}'' =1$ as $a'=a$, yielding 
$b \leq a$. 
Hence, again from \eqref{eqL},    
we obtain
\begin{equation} \label{eqN}  
b \land b{^*}' =0.
\end{equation}

{\bf Claim 2}: $b \lor b^*=1$. 

Suppose the claim is false.  Then $b \leq b \lor b^* \leq (b \lor b^*)'$ by Lemma \ref{Crucial_small_Lemma}, 
whence 
$b \leq b' \land b{^*}'$, which implies $b = b \land b' \land b{^*}' = 0$ by  
the equation \eqref{eqN}, 
contrary to 
$b > 0$, proving the claim.

From {\bf Claim 2} and Theorem \ref{Theorem_105} we have  $b' \land b{^*}'=0$,   
Since $b'=1$ by {\bf Claim 1}, it follows that
$b{^*} \geq b{^*}'' = 1$; so $b \leq b^{**} = 0$, contradicting $b > 0$, proving the lemma.    
\end{proof}

\begin{Lemma} \label{Helper} 
For every $x \in \mathbf{L}$, $x=1$ or $ x'=1 $ or $x = x' $.  
\end{Lemma}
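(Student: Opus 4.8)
We are working under the standing assumption that $\mathbf{L} \in \mathbf{JID_1}$ with $|L| \geq 2$ and satisfies (SC). I want to show that every element $x$ is one of three types: $x = 1$, or $x' = 1$, or $x = x'$. The plan is to case-split on whether $x = 1$ (trivial) and, when $x \neq 1$, to extract enough arithmetic from (SC) together with the already-established identities to force either $x' = 1$ or $x = x'$.

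**The approach.** First I would dispose of $x = 1$. So assume $x \neq 1$; then (SC) gives $x \land x'^* = 0$. From Lemma~\ref{Crucial_small_Lemma} I already know $x \leq x'$. The task reduces to showing that if $x' \neq 1$ then $x = x'$, i.e.\ under $x \leq x'$ I must upgrade this to equality whenever $x'$ is not the top. Since $x' \neq 1$ and $x' \neq 1$ means $x'$ itself is a non-unit, applying Lemma~\ref{Crucial_small_Lemma} to $x'$ yields $x' \leq x'' $. Combined with $x'' \leq x$ (axiom (d) of a $\mathbf{DQD}$-algebra), this chains to $x' \leq x'' \leq x$. But we also have $x \leq x'$, so $x \leq x' \leq x'' \leq x$, forcing $x = x' = x''$. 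This is the crux: the two applications of Lemma~\ref{Crucial_small_Lemma} (to $x$ and to $x'$) together with the built-in $x'' \leq x$ squeeze the chain shut.

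**Writing it up.** Let $x \in L$. If $x = 1$, we are done. Suppose $x \neq 1$; by Lemma~\ref{Crucial_small_Lemma} we have $x \leq x'$. If $x' = 1$ we are done, so assume $x' \neq 1$. Then Lemma~\ref{Crucial_small_Lemma}, applied to $x'$, gives $x' \leq x''$. Since $x'' \leq x$ by the defining axiom~(d) of $\mathbf{DQD}$-algebras, we obtain
\[
x \leq x' \leq x'' \leq x,
\]
whence $x = x'$. This exhausts the cases and proves the lemma.

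**Main obstacle.** There is no serious obstacle here once Lemma~\ref{Crucial_small_Lemma} is in hand: the only subtlety is recognizing that Lemma~\ref{Crucial_small_Lemma} applies not merely to $x$ but to the element $x'$ in the case $x' \neq 1$, and that the axiom $x'' \leq x$ closes the chain. The proof is a short trichotomy argument resting entirely on prior results, so I would expect it to be a few lines rather than an extended computation.
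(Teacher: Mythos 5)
Your proof is correct and follows essentially the same route as the paper: both apply Lemma~\ref{Crucial_small_Lemma} twice (to $x$ and, when $x'\neq 1$, to $x'$) to get $x \leq x'$ and $x' \leq x''$. The only difference is cosmetic: the paper closes the argument with a distributivity computation ($x = (x\lor x')\land(x\lor x'') = x\lor(x'\land x'') = x\lor x'$), whereas you close the chain directly via axiom (d), $x'' \leq x$ — which is, if anything, cleaner.
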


\begin{proof}
 Suppose $x \in L$ such that  $x \neq 1$ and $x' \neq 1$.  Then 
 by Lemma \ref{Crucial_small_Lemma}, we have  
$x \leq x'$.
Also, since $x' \neq 1$, we have $x' \leq x'' $, again by Lemma \ref{Crucial_small_Lemma}.   
So, $x = (x \lor x') \land x = (x \lor x') \land (x \lor x'') =x \lor (x' \land x'') = x \lor x'$, so $x \geq x'$, implying $x=x'$, proving the lemma.
\end{proof}

\begin{Lemma}\label{LA} 
Let $a \in L$ such that $a'=a$.  Then $a{^*}'=a^*$.  
\end{Lemma}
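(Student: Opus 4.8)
The plan is to show that $a^*$ is the lattice-theoretic complement of $a$, and that $(a^*)'$ is a complement of $a$ as well; since the lattice reduct of a semi-Heyting algebra is distributive, complements are unique, and the desired equality $a{^*}'=a^*$ falls out immediately. Before anything else I would record that $a\notin\{0,1\}$: from the hypothesis $a'=a$ together with the axioms $0'=1$, $1'=0$ and $|L|\ge 2$, either $a=0$ or $a=1$ would force $a'\ne a$. So $a$ is a genuine ``middle'' fixed point of $'$.

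First I would establish the two complementation identities for $a^*$. The meet identity is immediate from (SH1): $a\land a^* = a\land(a\to 0) = a\land 0 = 0$. For the join identity $a\lor a^*=1$, the key move is to feed $b:=a$ into Lemma \ref{Lemma_3.232} (which applies precisely because $a'=a$); it yields $a\lor a\lor a^*=1$, i.e.\ $a\lor a^*=1$. At this point $a^*$ is a complement of $a$.

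Next I would transfer these two facts to $(a^*)'$ using the two De Morgan laws available for $'$. Applying axiom (b) to $a\land a^*=0$ gives $1=0'=(a\land a^*)'=a'\lor(a^*)'=a\lor(a^*)'$, using $a'=a$. Applying the $\lor$-De Morgan law of Theorem \ref{Theorem_105} to $a\lor a^*=1$ gives $0=1'=(a\lor a^*)'=a'\land(a^*)'=a\land(a^*)'$. Thus $(a^*)'$ satisfies $a\lor(a^*)'=1$ and $a\land(a^*)'=0$, so $(a^*)'$ is also a complement of $a$. Uniqueness of complements in the distributive reduct then yields $a{^*}'=a^*$.

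I expect the only genuinely non-routine step to be the second one, namely recognizing that the specialization $b=a$ in Lemma \ref{Lemma_3.232} is exactly what delivers $a\lor a^*=1$ and hence turns $a^*$ into a lattice complement. Everything afterwards is a mechanical application of the two De Morgan identities for $'$ (one coming directly from axiom (b), the other from Theorem \ref{Theorem_105}) together with the standard uniqueness-of-complements fact in a distributive lattice.
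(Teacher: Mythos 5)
Your proof is correct, and it takes a genuinely different route from the paper's. The paper argues by contradiction: assuming $(a^*)' \neq a^*$, it first shows $a^* = (a^*)''$ (using $x' \lor x'^* = 1$ from Lemma \ref{L1}(3)), then applies the trichotomy of Lemma \ref{Helper} (every $x$ satisfies $x = 1$ or $x' = 1$ or $x' = x$) to $(a^*)'$, using $a \neq 0$ to force $a^* = 0$, and finally invokes (JID) to compute $a = a \lor a^* = (a' \lor a) \to (a' \lor 0) = a \to a = 1$, contradicting $a' = a$. Your argument is instead direct: $a^*$ is a lattice complement of $a$ (the meet identity by (SH1), the join identity $a \lor a^* = 1$ by specializing $b := a$ in Lemma \ref{Lemma_3.232}, which is indeed the key observation), and $(a^*)'$ is again a complement of $a$, obtained by pushing these two identities through the two De Morgan laws --- axiom (b) for meets and Theorem \ref{Theorem_105} for joins --- together with $a' = a$; uniqueness of complements in the bounded distributive lattice reduct then finishes. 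There is no circularity: Lemma \ref{Lemma_3.232} and Theorem \ref{Theorem_105} both precede Lemma \ref{LA}, and your appeals to them are legitimate under the section's standing hypotheses ($\mathbf{L} \in \mathbf{JID_1}$ satisfying (SC)). What your route buys is economy and transparency: no contradiction, no case analysis, and no use of Lemma \ref{Helper}; what the paper's route displays, by contrast, is the direct role of the (JID) identity in producing the final contradiction. One small remark: your opening observation that $a \notin \{0,1\}$ is never actually used in your argument (the paper does need $a \neq 0$ in its Claim 2 and $a \neq 1$ at the end), so you could simply drop it.
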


\begin{proof} 
First, observe that $a \neq 0$ and $a \neq 1$, since $a = a'$.   
Suppose $a{^*}' \neq a^*$.
The following claims will lead to a contradiction. \\  

{\bf Claim 1}:  $a^*  = a{^*}'' $.

$a \lor a{^*}'' =  a'' \lor a{^*}'' =  (a \lor a{^*})'' = [a' \lor (a' \to 0)]'' = 1$ 
by Lemma 3.4(3).  Hence,  $a \lor a{^*}'' = 1$, implying $a^* \leq a{^*}''$, and so $a^*= a{^*}''$, proving the claim.\\

{\bf Claim 2}:   $ a^*=0. $ 

 We have, by Lemma \ref{Helper},  
  that $a{^*}'=1$ or $a{^*}'' = 1$ or $a{^*}'=a{^*}''$.  So, by {\bf Claim 1}, we get   
 $a^*=a{^*}''=0$ \ or \ $a{^*} = 1$ (as $a^* \geq a{^*}''$)  \ or \ $a{^*}'=a{^*}$.  But, we know, by our assumption, that $a^* \neq a{^*}'$. 
Hence, $a{^*} = 0$ or $a{^*} =1$, which clearly implies  $a^*=0$ or $a=0$.  Since we know that $a \neq 0$, the claim is proved.  

Now, in view of (JID) and {\bf Claim 2}, we have $a= a \lor 0 = a \lor a^*=a' \lor (a \to 0) = (a' \lor a) \to (a' \lor 0)= a \to a = 1$, implying $a =1$, which is a contradiction, proving the lemma.
\end{proof}

\begin {Proposition} \label{Theorem_DQDBSH}  
Let $|L| > 2$.  Suppose there is an $a  \in L$ such that $a'=a$.  Then 
 $\mathbf{L} \in \{\mathbf{D_1, D_2, D_3}\}$,  up to isomorphism.  
\end{Proposition}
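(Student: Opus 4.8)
The plan is to determine $\mathbf{L}$ in three stages: first its lattice reduct, then the unary operation $'$, and finally the implication $\to$. I would start from what the section's lemmas already supply. Since $\mathbf{L}\in\mathbf{JID_1}$ satisfies (SC) and possesses a fixed point $a'=a$, Lemma~\ref{Theorem_height} tells us that the height of $L$ is at most $2$, so every proper nonzero element is simultaneously an atom and a coatom. As semi-Heyting algebras are distributive, these middle elements form an antichain with join $1$ and pairwise meet $0$, and distributivity forbids three of them (three pairwise incomparable middle elements $p,q,r$ would give $p=p\wedge(q\vee r)=(p\wedge q)\vee(p\wedge r)=0$). Because $|L|>2$, the lattice reduct is thus either the three-element chain $0<m<1$ or the four-element Boolean lattice $\{0,a,b,1\}$.

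I would next rule out the chain. There the only possible fixed point is $m$ (as $0'=1$ and $1'=0$), so $m'=m$; by (SH1), $m\wedge m^*=m\wedge 0=0$, and in a chain $0$ is the only element disjoint from $m$, whence $m^*=0$ and $(m^*)'=0'=1$. But Lemma~\ref{LA} applied to $m$ gives $(m^*)'=m^*=0$, so $1=0$, a contradiction. Hence the lattice reduct is the Boolean lattice $\{0,a,b,1\}$, and the given fixed point is one of the two middle elements; say $a'=a$. Then I would fix the value of $b'$. Since $b\neq 1$, Lemma~\ref{Helper} forces $b'=1$ or $b'=b$. If $b'=1$, then $b''=1'=0$ while $a''=a'=a$, and the De Morgan axiom (c) applied to $a\vee b=1$ yields $1=1''=(a\vee b)''=a''\vee b''=a\vee 0=a$, contradicting $a\neq 1$. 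Therefore $b'=b$, so $'$ is precisely the involution of $\mathbf{D_1},\mathbf{D_2},\mathbf{D_3}$. As the lattice is Boolean, the pseudocomplement $x^*=x\to 0$ equals the lattice complement, so $x\vee x^*=1$; that is, $\mathbf{L}$ satisfies (Bo), and hence $\mathbf{L}\in\mathbf{DQB}$.

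To finish, I would invoke Proposition~\ref{Prop_DQB}, which gives $\mathbf{DQB}=\mathbf{V(\{D_1,D_2,D_3\})}$, together with Theorem~\ref{Corollary_4C}, by which $\mathbf{L}$ (with $|L|>2$ and (SC)) is simple and so subdirectly irreducible. Since $\mathbf{DQD}$ is congruence distributive, J\'onsson's lemma locates the subdirectly irreducibles of $\mathbf{V(\{D_1,D_2,D_3\})}$ inside $HS(\{D_1,D_2,D_3\})$; each $\mathbf{D_i}$ is simple and, because $a\to 0=b$, its only proper subalgebra is $\{0,1\}$, so the only subdirectly irreducible members with more than two elements are $\mathbf{D_1},\mathbf{D_2},\mathbf{D_3}$ themselves. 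As $|L|=4$, this gives $\mathbf{L}\cong\mathbf{D_i}$ for some $i$.

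The main obstacle is exactly this last identification of $\to$: the reduction to a four-element Boolean-reduct algebra carrying the De Morgan involution is routine from the prior lemmas, but recognizing $\mathbf{L}$ as one of the three tables of Figure~1 is the real content. If one prefers to avoid the J\'onsson argument, it can be replaced by a direct computation: (SH3) fixes the diagonal, Lemma~\ref{2.2}(i) the row of $1$, and the pseudocomplement the column of $0$; (SH1) then restricts each of $a\to 1,\,b\to 1,\,a\to b,\,b\to a$ to two candidate values, and (JID) pins down the row of $0$, leaving precisely the three admissible tables.
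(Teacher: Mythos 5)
Your proof is correct and takes essentially the same route as the paper: Lemma \ref{Theorem_height} plus distributivity reduces the lattice reduct to a $3$-chain or the $4$-element Boolean lattice, Lemma \ref{LA} eliminates the chain (the paper uses it to show $a$ and $a^*$ are complementary, you to derive $0'=0$ in the chain), (Bo) follows, and Proposition \ref{Prop_DQB}(a) finishes. Your explicit J\'onsson-lemma step, using simplicity from Theorem \ref{Corollary_4C}, soundly fills in what the paper leaves implicit in citing Proposition \ref{Prop_DQB}(a)---namely the passage from $\mathbf{L} \in \mathbf{V(\{D_1,D_2,D_3\})}$ to $\mathbf{L} \cong \mathbf{D_i}$---though note that in $\mathbf{D_3}$ the set $\{0,1\}$ is not even a subalgebra (there $0 \to 1 = a$), which only strengthens your claim that every subalgebra of a $\mathbf{D_i}$ with more than two elements is the whole algebra.
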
 

\begin{proof}
In view of Lemma \ref{Theorem_height} and $|L| > 2$, the height of $L$ is  $2$.  Since the lattice reduct of $L$ is distributive, $L$ is either a $3$-element chain or a $4$-element Boolean lattice.   We know from Lemma \ref{LA} that 
$a{^*}'=a^*$.  Thus $a$ and $a^*$ are complementary, implying that the lattice reduct of $\mathbf{L}$
is a $4$-element Boolean lattice; so $\mathbf{L} \models (Bo)$.  Then, from      
Proposition \ref{Prop_DQB} (a)
it follows that $\mathbf{L} \in  \{\mathbf{D_1, D_2, D_3}\}$, up to isomorphism.
\end{proof}

\begin{Proposition} \label{Theorem_DSSH}
Suppose $x' \neq x$, for every $x \in L \setminus \{1 \}$.
Then
 $\mathbf{L} \in \mathbf{DSt}$.  
\end{Proposition}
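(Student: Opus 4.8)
The plan is to exploit Lemma~\ref{Helper} together with the standing hypothesis to collapse the negation almost entirely, and then to read off (DSt) by inspection.

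First I would observe that the hypothesis ``$x' \neq x$ for every $x \in L \setminus \{1\}$'' eliminates the third alternative in Lemma~\ref{Helper}. Indeed, Lemma~\ref{Helper} asserts that each $x \in L$ satisfies $x = 1$ or $x' = 1$ or $x = x'$; for $x \neq 1$ the hypothesis rules out $x = x'$, and $x = 1$ is excluded by assumption, so necessarily $x' = 1$. Hence for every $x \in L$ we have the dichotomy $x = 1$ or $x' = 1$, i.e.\ the negation is the ``trivial'' dual-Stone negation sending every non-top element to $1$.

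Next I would verify the dual Stone identity (DSt), namely $x' \land x'' = 0$, directly from this dichotomy by splitting into two cases. If $x = 1$, then $x' = 0$ by (a), so $x' \land x'' = 0$. If $x \neq 1$, then $x' = 1$ by the dichotomy above, whence $x'' = 1' = 0$ by (a), and again $x' \land x'' = 0$. Thus (DSt) holds throughout $L$, and since $\mathbf{L}$ is already a $\mathbf{DQD}$-algebra, I conclude $\mathbf{L} \in \mathbf{DSt}$.

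The essential content lies entirely in the preceding lemmas---in particular Lemma~\ref{Crucial_small_Lemma} (which gives $x \leq x'$ for $x \neq 1$) and Lemma~\ref{Helper}, both of which rest on the simplicity condition (SC). Once those are in hand there is no real obstacle, and the only point requiring a moment's care is to notice that the hypothesis is precisely what is needed to discard the fixed-point case $x = x'$ in Lemma~\ref{Helper}. Conceptually, this proposition is the complementary half of Proposition~\ref{Theorem_DQDBSH}, which handled the situation where such a fixed point $a = a'$ does exist; together they should dispose of both branches in the characterization of the subdirectly irreducible algebras of $\mathbf{JID_1}$.
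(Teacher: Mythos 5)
Your proposal is correct and takes essentially the same route as the paper: the paper's own proof re-runs the argument of Claim 1 of Lemma \ref{Theorem_height} (resting on Lemma \ref{Crucial_small_Lemma}) to show that $x' \neq 1$ would force $x = x'$, which is precisely the trichotomy of Lemma \ref{Helper} that you invoke directly before reading off (DSt) by cases. Citing Lemma \ref{Helper} rather than repeating its derivation is a slight streamlining, not a different argument.
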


\begin{proof}    
Let $x \in L \setminus \{1 \}$.  Suppose that  
$x' \neq 1$.  Then,  
arguing as in the proof of {\bf Claim 1} of Lemma \ref{Theorem_height}, we get
$ x \leq x'$ and 
 $ x' \leq x'' \leq x$, which implies $x = x'$, contradicting the hypothesis.  So   
 $x'=1$, which implies $x' \land x'' \approx 0$, 
 Hence $\mathbf{L}$ is a dually Stone semi-Heyting algebra.
\end{proof}

We are now ready to prove our main theorem of this section.

\begin{Theorem}  \label{Th_JID} 
Let $\mathbf{ L} \in \mathbf{DQD}$ with $|L| > 2$. 
Then the following are equivalent:
\begin{enumerate}
 \item[{\rm(a)}] $\mathbf{ L}$ is a subdirectly irreducible algebra in $\mathbf{JID_1}$,
 \item[{\rm(b)}]  $\mathbf{ L}$  is a simple algebra in $\mathbf{JID_1}$, 
 \item[{\rm(c)}]  $\mathbf{ L} \in \mathbf{JID_1}$ such that {\rm (SC)} holds in $\mathbf{ L}$,
 \item[{\rm(d)}] $\mathbf{L}  \in \{\mathbf{D_1, D_2, D_3}\}$, up to isomorphism, or  $\mathbf{L} \in \mathbf{DSt}$ 
and $\mathbf{L}$ satisfies {\rm (SC)}.
\end{enumerate}
\end{Theorem}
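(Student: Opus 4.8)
The plan is to establish the cycle of implications $(a) \Rightarrow (b) \Rightarrow (c) \Rightarrow (d) \Rightarrow (a)$, leaning heavily on the machinery already assembled in this section. The equivalence of $(a)$, $(b)$ and $(c)$ is essentially free: since $\mathbf{L} \in \mathbf{JID_1}$ and $|L| > 2$, Theorem~\ref{Corollary_4C} tells us that being subdirectly irreducible, being simple, and satisfying (SC) are all equivalent. So the only genuine content is the equivalence of $(c)$ and $(d)$, and I would structure the proof as a clean two-way argument between these two conditions after disposing of the top three in a single sentence.

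For the implication $(c) \Rightarrow (d)$, I would assume $\mathbf{L} \in \mathbf{JID_1}$ with $|L| > 2$ satisfying (SC), and split on whether there exists a fixed point of the negation. The key dichotomy is supplied by the hypotheses of the two propositions proved just above: either there is some $a \in L$ with $a' = a$, or else $x' \neq x$ for every $x \in L \setminus \{1\}$ (noting that $1' = 0 \neq 1$ and $0' = 1 \neq 0$ handle the bounds automatically, so the fixed point, if any, is interior). In the first case, Proposition~\ref{Theorem_DQDBSH} gives directly that $\mathbf{L} \in \{\mathbf{D_1}, \mathbf{D_2}, \mathbf{D_3}\}$ up to isomorphism, which is the first disjunct of $(d)$. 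In the second case, Proposition~\ref{Theorem_DSSH} gives $\mathbf{L} \in \mathbf{DSt}$, and since (SC) was assumed throughout, we land in the second disjunct of $(d)$. The crucial point here is that these two propositions were proved under exactly the standing assumption ``$\mathbf{L} \in \mathbf{JID_1}$, $|L| \geq 2$, (SC)'' declared in bold at the start of the section, so their hypotheses are met verbatim.

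For the reverse implication $(d) \Rightarrow (c)$, I would verify each disjunct lands back in $\mathbf{JID_1}$ with (SC). If $\mathbf{L} \in \{\mathbf{D_1}, \mathbf{D_2}, \mathbf{D_3}\}$, then each of these is a $4$-element algebra already noted (right after Corollary~\ref{Cor_DStC}) to be a $\mathbf{JID}$-algebra; one checks membership in $\mathbf{JID_1}$ and verifies (SC) directly from the tables in Figure~1, observing that for each the only non-unit elements have $x \land x'^* = 0$. If instead $\mathbf{L} \in \mathbf{DSt}$ satisfies (SC), then Theorem~\ref{DSt_JID}(c) gives precisely $\mathbf{L} \in \mathbf{JID_1}$, and (SC) holds by hypothesis, so $(c)$ follows.

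The main obstacle I anticipate is not logical but organizational: ensuring the standing hypotheses of Propositions~\ref{Theorem_DQDBSH} and~\ref{Theorem_DSSH} are genuinely available when invoked. The statement of the theorem begins only from $\mathbf{L} \in \mathbf{DQD}$ with $|L| > 2$, which is weaker than the section's standing assumption, so in proving $(c) \Rightarrow (d)$ I must first record that condition $(c)$ reinstates exactly that standing assumption before appealing to the two propositions. The only other delicate point is the small finite verification that $\mathbf{D_1}, \mathbf{D_2}, \mathbf{D_3}$ satisfy (SC) and lie in $\mathbf{JID_1}$; this is routine from Figure~1 but should be acknowledged rather than silently assumed, since (SC) is the hinge on which the whole equivalence turns.
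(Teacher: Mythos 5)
Your proposal is correct and follows essentially the same route as the paper's proof: dispose of (a)$\Leftrightarrow$(b)$\Leftrightarrow$(c) via Theorem~\ref{Corollary_4C}, prove (c)$\Rightarrow$(d) by splitting on the existence of a fixed point $a'=a$ and invoking Propositions~\ref{Theorem_DQDBSH} and~\ref{Theorem_DSSH}, and prove (d)$\Rightarrow$(c) by a routine check for $\mathbf{D_1},\mathbf{D_2},\mathbf{D_3}$ together with Theorem~\ref{DSt_JID}(c) for the dually Stone case. Your added care about the bounds ($0'=1$, $1'=0$) aligning the fixed-point dichotomy with the hypothesis of Proposition~\ref{Theorem_DSSH}, and about condition (c) reinstating the section's standing assumptions, are points the paper leaves implicit but are handled correctly.
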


\begin{proof} 
In view of Theorem \ref{Corollary_4C}, 
we only need to prove (c) $\Leftrightarrow$ (d).  Now, suppose (d) holds.  Then it is routine to verify that 
$\{\mathbf{D_1, D_2, D_3}\} \subseteq \mathbf{JID_1}$ and $\{\mathbf{D_1, D_2, D_3}\}$ satisfies (SC).  For the second case, it suffices to apply
Theorem \ref{DSt_JID}(c).   
Thus (d) $\Rightarrow$ (c).  Next, suppose (c) is true 
 in $\mathbf{L}$.                      
 We consider two cases: 
 
First, suppose there is an $a \in L$ such that $a'=a$.  Then, by Proposition \ref{Theorem_DQDBSH}, $\mathbf{L} \in \{\mathbf{D_1}, \mathbf{D_2}, \mathbf{D_3}\}$.

Next, suppose 
$\mathbf{L}$ satisfies: 
\begin{equation} \label{eqnE}
\text{For every } x \in L, \ x' \neq x. 
\end{equation} 
Then, using 
Proposition \ref{Theorem_DSSH}, we obtain that $\mathbf{L}$ is dually Stone, which leads us to conclude   
 (c) $\Rightarrow$ (d).
\end{proof}

We have the following important consequence of Theorem \ref{Th_JID}.

\begin{Corollary}
$\mathbf{JID_1} = \mathbf{DSt} 
\lor \mathbf{V(D_1, D_2, D_3)}. $ 
\end{Corollary}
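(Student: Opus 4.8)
The plan is to prove the equality by comparing the subdirectly irreducible members of the two sides, using Birkhoff's theorem that every variety is generated by (hence determined by) its subdirectly irreducible algebras. Since every algebra in $\mathbf{DQD}$ carries a lattice reduct, the variety $\mathbf{DQD}$ and all of its subvarieties are congruence-distributive, so J\'onsson's Lemma is available throughout.

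First I would compute the subdirectly irreducible members of the right-hand side $\mathbf{DSt} \lor \mathbf{V(D_1, D_2, D_3)}$. By J\'onsson's Lemma, every subdirectly irreducible member of this join lies in $HSP_U(\mathbf{DSt} \cup \mathbf{V(D_1, D_2, D_3)})$; since an ultraproduct of a family drawn from the union concentrates (by {\L}o\'s) on one of the two varieties, and each of $\mathbf{DSt}$ and $\mathbf{V(D_1, D_2, D_3)}$ is already closed under $H$, $S$ and $P_U$, this class collapses to $\mathbf{DSt} \cup \mathbf{V(D_1, D_2, D_3)}$. Hence the subdirectly irreducible members of the join are exactly those of $\mathbf{DSt}$ together with those of $\mathbf{V(D_1, D_2, D_3)}$.

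Next I would identify these two families explicitly for algebras with $|L| > 2$. By Corollary \ref{Cor_DSt}, the subdirectly irreducible members of $\mathbf{DSt}$ are precisely the $\mathbf{DSt}$-algebras satisfying (SC). Since $\mathbf{V(D_1, D_2, D_3)} = \mathbf{DQB} = \mathbf{DMB}$ is the discriminator variety generated by $\mathbf{D_1, D_2, D_3}$ (Proposition \ref{Prop_DQB}(a)), a second application of J\'onsson's Lemma, together with the simplicity of each $\mathbf{D_i}$, shows that its only subdirectly irreducibles of cardinality $> 2$ are $\mathbf{D_1, D_2, D_3}$ up to isomorphism. Therefore the subdirectly irreducibles of the right-hand side with $|L| > 2$ are exactly $\mathbf{D_1, D_2, D_3}$ together with the $\mathbf{DSt}$-algebras satisfying (SC).

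Finally I would match this against the left-hand side. Theorem \ref{Th_JID} states that the subdirectly irreducible members of $\mathbf{JID_1}$ with $|L| > 2$ are exactly $\mathbf{D_1, D_2, D_3}$ (up to isomorphism) or the $\mathbf{DSt}$-algebras satisfying (SC)---literally the class just computed. The two-element and trivial algebras belong to both varieties, so the two sides have identical subdirectly irreducible members, and the asserted equality follows by Birkhoff. I expect the only delicate point to be the bookkeeping in the J\'onsson step: confirming that the ultraproduct closure of the union genuinely collapses to the union (so that no ``mixed'' subdirectly irreducible arises) and that $\mathbf{V(D_1, D_2, D_3)}$ contributes no subdirectly irreducible beyond its three simple generators and the two-element algebra. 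Both facts rest on the discriminator (hence arithmetical) structure supplied by Corollary \ref{Discr} and Corollary \ref{DSt_DISCR}.
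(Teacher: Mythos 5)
Your proposal is correct and takes essentially the same route as the paper: the corollary is stated there as an immediate consequence of Theorem \ref{Th_JID}, i.e., both sides are varieties whose subdirectly irreducible members coincide --- on the left by Theorem \ref{Th_JID}, and on the right by Corollary \ref{Cor_DSt}, Proposition \ref{Prop_DQB}, and the standard J\'onsson-lemma computation of the subdirectly irreducibles of a join of congruence-distributive varieties. Your explicit bookkeeping (the ultraproduct of a family from the union concentrating on one of the two varieties, and the simple generators $\mathbf{D_1}, \mathbf{D_2}, \mathbf{D_3}$ being the only nontrivial subdirectly irreducibles of $\mathbf{V(D_1,D_2,D_3)}$) is exactly the argument the paper leaves implicit.
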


Now, we focus on the subvariety $\mathbf{JIDH_1}$ of $\mathbf{JIDH}$.  Note that the variety of Boolean algebras is the only atom in the lattice of subvarieties of $\mathbf{JIDH_1}$.  For $\mathbf{V}$ a subvariety of $\mathbf{JIDH_1}$, let $\mathcal{L}\mathbf{(V)}$ and $\mathcal{L}^+\mathbf{(V)}$ denote, respectively, the lattice of subvarieties of $\mathbf{V}$ and the lattice of nontrivial subvarieties of 
$\mathbf{V}$.  Let  $\mathbf{1} \oplus \mathbf{L}$ denote the ordinal sum of the trivial lattice $\mathbf{1}$ and a lattice $\mathbf{L} $. 
  
Restricting the semi-Heyting reduct in the above corollary to Heyting algebras, we obtain the following interesting corollary.

\begin{Corollary}  We have
\begin{enumerate}
\item $\mathbf{JIDH_1} = \mathbf{DStH}  \lor \mathbf{V(D_2)}$,
\item $\mathcal{L} \mathbf{(JIDH_1)} \cong  \mathbf{1} \oplus (\mathcal{L}^+ \mathbf{(DStH)} \times \mathbf{2})$.  
\end{enumerate}
\end{Corollary}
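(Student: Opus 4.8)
The plan is to deduce both statements from the characterization of the subdirectly irreducible members of $\mathbf{JID_1}$ in Theorem \ref{Th_JID}, specialized to the Heyting case, together with the fact that $\mathbf{JIDH_1}$ is congruence-distributive, being a subvariety of the discriminator variety $\mathbf{JID}$ (Corollary \ref{Discr}), so that J\'onsson's lemma is available. Throughout I use that every variety is generated by its subdirectly irreducible members, and that an algebra lying in a subvariety is subdirectly irreducible there iff it is subdirectly irreducible in the ambient variety (the congruence lattice being intrinsic).

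For part (1), since $\mathbf{JIDH_1}$ consists of the Heyting members of $\mathbf{JID_1}$, its subdirectly irreducibles are exactly the Heyting algebras among those listed in Theorem \ref{Th_JID}(d). Among $\mathbf{D_1},\mathbf{D_2},\mathbf{D_3}$ only $\mathbf{D_2}$ satisfies (H): if $\mathbf{D_1}$ or $\mathbf{D_3}$ were Heyting it would lie in $\mathbf{DMBH}=\mathbf{V(D_2)}$ by Proposition \ref{Prop_DQB}(b), forcing it (being simple) into $HS(\mathbf{D_2})=\{\mathbf{D_2},\mathbf{2}\}$ by J\'onsson, which is impossible; alternatively one checks directly that $(a\wedge b)\to b=0\to b=a\neq 1$ in both. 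The remaining subdirectly irreducibles are the dually Stone algebras satisfying (SC), and restricted to the Heyting case these are precisely the subdirectly irreducible members of $\mathbf{DStH}$. Hence $\mathrm{SI}(\mathbf{JIDH_1})=\{\mathbf{D_2}\}\cup\mathrm{SI}(\mathbf{DStH})$, and generating, I obtain $\mathbf{JIDH_1}=\mathbf{V(D_2)}\lor\mathbf{DStH}=\mathbf{DStH}\lor\mathbf{V(D_2)}$.

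For part (2), I describe each subvariety by its set of subdirectly irreducible members. The union above is disjoint, since $\mathbf{D_2}\notin\mathbf{DStH}$ (the fixed point $a=a'$ gives $a'\wedge a''=a\neq 0$, violating (DSt)), while the $2$-element Boolean algebra $\mathbf{2}$ is a subalgebra of $\mathbf{D_2}$ and the least nontrivial member of $\mathrm{SI}(\mathbf{DStH})$, with $V(\mathbf{2})$ the variety of Boolean algebras. Define $\Phi:\mathcal{L}^+\mathbf{(DStH)}\times\mathbf{2}\to\mathcal{L}^+\mathbf{(JIDH_1)}$ by $\Phi(\mathbf{W},0)=\mathbf{W}$ and $\Phi(\mathbf{W},1)=\mathbf{W}\lor\mathbf{V(D_2)}$. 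Using J\'onsson's lemma and $HS(\mathbf{D_2})=\{\mathbf{D_2},\mathbf{2}\}$, the subdirectly irreducibles of $\Phi(\mathbf{W},i)$ are $\mathrm{SI}(\mathbf{W})$ together with $\mathbf{D_2}$ exactly when $i=1$; this makes $\Phi$ an order-embedding, since the second coordinate is recovered from whether $\mathbf{D_2}$ is a member and $\mathbf{W}$ from intersecting with $\mathbf{DStH}$. For surjectivity, given a nontrivial $\mathbf{U}\subseteq\mathbf{JIDH_1}$, set $\mathbf{W}:=\mathbf{U}\cap\mathbf{DStH}$ and $i:=1$ iff $\mathbf{D_2}\in\mathbf{U}$; then $\mathbf{W}$ is nontrivial (as $\mathbf{U}$ contains $\mathbf{2}$, directly or as a subalgebra of $\mathbf{D_2}$) and $\mathbf{U}=\Phi(\mathbf{W},i)$ because both have the same subdirectly irreducible members. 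Thus $\mathcal{L}^+\mathbf{(JIDH_1)}\cong\mathcal{L}^+\mathbf{(DStH)}\times\mathbf{2}$, and prepending the trivial variety yields $\mathcal{L}\mathbf{(JIDH_1)}\cong\mathbf{1}\oplus(\mathcal{L}^+\mathbf{(DStH)}\times\mathbf{2})$.

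The main obstacle is the bookkeeping in part (2): verifying that $\Phi$ is a well-defined bijection respecting joins and meets. The delicate point is that the atom $\mathbf{2}$ is shared between $\mathbf{V(D_2)}$ and $\mathbf{DStH}$, so $\mathbf{V(D_2)}\cap\mathbf{DStH}=V(\mathbf{2})$ rather than the trivial variety; this forces every nontrivial subvariety to contain $V(\mathbf{2})$ and is exactly what makes the ``$\times\mathbf{2}$'' a genuinely free extra choice orthogonal to the dually Stone coordinate. Congruence-distributivity is what licenses the computation of the subdirectly irreducible members of the joins $\mathbf{W}\lor\mathbf{V(D_2)}$ used throughout.
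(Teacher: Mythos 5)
Your proposal is correct and follows essentially the route the paper intends: the paper obtains this corollary precisely by restricting the subdirectly irreducible characterization of Theorem \ref{Th_JID} (equivalently, the corollary $\mathbf{JID_1} = \mathbf{DSt} \lor \mathbf{V(D_1, D_2, D_3)}$) to Heyting reducts, noting that only $\mathbf{D_2}$ among $\mathbf{D_1},\mathbf{D_2},\mathbf{D_3}$ satisfies (H). Your J\'onsson-lemma bookkeeping for part (2) --- the map $\Phi(\mathbf{W},i)$, the disjointness $\mathbf{D_2}\notin\mathbf{DStH}$, and recovery of the two coordinates via intersection with $\mathbf{DStH}$ and membership of $\mathbf{D_2}$ --- is exactly the standard filling-in of the details the paper leaves implicit.
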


The preceding corollary leads to the following open problem.\\

PROBLEM:  Investigate the structure of $\mathcal{L}^+\mathbf{(DStH)}$.

\vspace{1cm}
\section{JI-distributive, dually quasi-De Morgan, linear Semi-Heyting Algebras}  
\medskip

In this section we focus on the 
linear identity:
$$ \text{\hspace{-2in}(L) } \qquad  (x \to y) \lor (y \to x) \approx 1.$$

Let $\mathbf {DQDL}$ [$\mathbf {JIDL}$] denote the subvariety of  $\mathbf {DQD}$ [$\mathbf {JID}$] defined by (L),
and let $\mathbf {JIDLH}$ denote the subvariety of $\mathbf {JIDL}$ consisting of JI-distributive,  dually quasi-De Morgan, linear Heyting algebras.

 The following result (stated in the current terminology) is needed later in this section. 
 
\begin{Proposition} \label{PH} \cite [Lemma 12.1(f)] {Sa11}
Let $\mathbf {L}$ be a linear semi-Heyting algebra.   Then                                     
$\mathbf {L} \models {\rm (H)}$. 
Hence, $\mathbf{JIDL} = \mathbf {JIDLH}$.  
\end{Proposition}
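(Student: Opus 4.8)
The plan is to prove the first assertion, $\mathbf{L} \models (H)$, and then read off the equality of varieties. First I would observe that (H) is equivalent, over semi-Heyting algebras, to the quasi-identity $x \leq y \Rightarrow x \to y = 1$: indeed (H) is the instance obtained by replacing $x$ with $x \wedge y$, and conversely if $x \leq y$ then $x \wedge y = x$. Since (H) is an identity, it suffices to verify it in every subdirectly irreducible member of the variety of linear semi-Heyting algebras; equivalently, to show that in each such algebra $x \leq y$ forces $x \to y = 1$.

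So let $\mathbf{L}$ be a subdirectly irreducible linear semi-Heyting algebra and let $x \leq y$ in $L$. The key structural input is the description of subdirectly irreducible semi-Heyting algebras from \cite{Sa08}: such an $\mathbf{L}$ has a greatest element $\omega$ strictly below $1$, so that every $p \in L$ with $p \neq 1$ satisfies $p \leq \omega$. Granting this, suppose for contradiction that $x \to y \neq 1$. Then $x \to y \leq \omega$, and if also $y \to x \neq 1$ then $y \to x \leq \omega$, whence $(x \to y) \lor (y \to x) \leq \omega < 1$, contradicting (L). Therefore $y \to x = 1$, and (SH1) gives $y = y \wedge (y \to x) = y \wedge x = x$; but then $x \to y = x \to x = 1$ by (SH3), contradicting the assumption. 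Hence $x \to y = 1$, so $\mathbf{L} \models (H)$, and the first assertion follows.

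For the equality, every member of $\mathbf{JIDL}$ has a linear semi-Heyting reduct and therefore satisfies (H) by the first part, so its reduct is a Heyting algebra and the algebra lies in $\mathbf{JIDLH}$; the reverse inclusion is immediate from the definition of $\mathbf{JIDLH}$. Thus $\mathbf{JIDL} = \mathbf{JIDLH}$. The one substantive point --- and the step I expect to be the main obstacle --- is the structural fact that a subdirectly irreducible semi-Heyting algebra has a second largest element; once that is in hand (it is precisely the feature that lets (L) propagate a non-top value up to $\omega$ and collide with the top), the rest is a short computation with (SH1), (SH3) and (L). Alternatively, one could first establish that such an $\mathbf{L}$ is in fact a chain, and then argue that on a chain (L) forces $x \to y = 1$ or $y \to x = 1$, concluding exactly as above.
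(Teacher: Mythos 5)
Your proof is correct, but it takes a genuinely different route from the paper, which offers no argument at all: it simply cites Lemma 12.1(f) of \cite{Sa11} for the fact that linear semi-Heyting algebras satisfy (H), and the equality $\mathbf{JIDL} = \mathbf{JIDLH}$ is then read off exactly as you do. Your argument replaces that external citation with a self-contained proof: you reformulate (H) as the quasi-identity $x \leq y \Rightarrow x \to y = 1$ (the equivalence you give is valid), reduce via Birkhoff's subdirect representation theorem to subdirectly irreducible linear semi-Heyting algebras, and then invoke the structure theory of \cite{Sa08} --- congruences of a semi-Heyting algebra are determined by lattice filters, so an algebra is subdirectly irreducible precisely when it has a second largest element $\omega$ --- to run the case analysis: if $x \to y \neq 1$ and $y \to x \neq 1$ then both lie below $\omega$, violating (L); while $y \to x = 1$ together with (SH1) forces $x = y$ and hence $x \to y = 1$ by (SH3). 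Each step checks out, including the subtle point that subdirect irreducibility is an absolute property of the algebra, so the characterization from \cite{Sa08} applies to the s.i.\ members of the subvariety defined by (L). What the two approaches buy: the paper's citation keeps Section 6 short and leans on a result of \cite{Sa11} that is presumably established there by direct equational reasoning; your proof makes the proposition self-contained modulo the filter--congruence correspondence of \cite{Sa08}, at the cost of importing that structural theorem, which this paper never restates. Your closing remark that one could instead first show the s.i.\ algebras are chains is unnecessary for the argument and can be dropped.
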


 \begin{Lemma} \label{L4_10}
 Let $\mathbf{L} \in \mathbf{DQDL}$ and let $x,y \in L$.  Then
 \begin{enumerate}
  \item[{\rm(a)}]  $(x \to y) \lor (y \to x)'' =1$,  
 \item[{\rm(b)}] $x\  \leq \ y \lor (y \to x)'' $.   
\end{enumerate}  
\end{Lemma}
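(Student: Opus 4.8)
The plan is to derive (a) directly from the linear identity (L) by applying the dual quasi-De Morgan operation, and then to obtain (b) by meeting the resulting equation with $x$ and invoking the absorption law recorded in Lemma \ref{2.2}.

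For part (a), I would start from (L), which gives $(x \to y) \lor (y \to x) = 1$. Applying $''$ to both sides and using $1'' = 1$ together with the defining identity $(u \lor v)'' \approx u'' \lor v''$ of $\mathbf{DQD}$-algebras (condition (c) of the definition), I obtain $(x \to y)'' \lor (y \to x)'' = 1$. Since $(x \to y)'' \leq (x \to y)$ by the defining inequality $u'' \leq u$ (condition (d)), replacing the first disjunct by the larger element $(x \to y)$ can only increase the join, so $(x \to y) \lor (y \to x)'' \geq 1$, which forces the equality asserted in (a).

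For part (b), the idea is to meet $x$ with the identity just established, giving $x = x \land 1 = x \land [(x \to y) \lor (y \to x)'']$. Writing the join as $(y \to x)'' \lor (x \to y)$ and applying Lemma \ref{2.2}(\ref{xii}) with the free disjunct $(y \to x)''$ and the implication term $x \to y$ (that is, taking the bound variable equal to $y$), the term $x \to y$ collapses to $y$, so that $x = x \land [(y \to x)'' \lor y] = x \land [y \lor (y \to x)'']$. Hence $x \leq y \lor (y \to x)''$, which is (b).

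The argument is short, and the only genuine insight lies in the two-step maneuver proving (a): distributing $''$ over the join coming from the linear identity via condition (c), and then weakening one copy back through $u'' \leq u$ via condition (d). Once (a) is in hand, the extraction of (b) by the absorption identity of Lemma \ref{2.2} is purely mechanical, so I do not expect any real obstacle here.
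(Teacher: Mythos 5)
Your proposal is correct and is essentially the paper's own proof: for (a) the paper uses exactly the same two ingredients — condition (c) to distribute $''$ over the join from (L) and condition (d) to weaken $(x \to y)''$ to $(x \to y)$ — only written as a single inequality chain starting from the target expression rather than from (L). Part (b) is the identical maneuver: meet with $x$, then apply Lemma \ref{2.2}(\ref{xii}) with free disjunct $(y \to x)''$ to collapse $x \to y$ to $y$.
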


\begin{proof}
$(x \to y) \lor (y \to x)'' \geq (x \to y)''  \lor (y \to x)'' 
                                 =     [(x \to y) \lor (y \to x)]'' 
                                 = 1$  by (L), proving (a).
Using Lemma \ref{2.2} (\ref{xii}) and (a), we get $x \land [y \lor (y \to x)''] = x \land [(x\to y) \lor (y \to x)'']     = x$, implying (b).   
\end{proof}

Note that the algebra $\mathbf {SIX}$ described earlier in Section 3 is actually an algebra in
$\mathbf {JIDL}$.  Hence $\mathbf {JIDL}$ does not satisfy (Lev 1);         
but $\mathbf {JIDL}$ is at level 2, in view of Theorem \ref{DmsSH}. 

In this section, our goal is to present, as an application of Theorem \ref{Th_JID}, an explicit description of subdirectly irreducible (= simple) algebras in the variety $\mathbf{JIDL_1}$ of JI-distributive, dually quasi-De Morgan, linear semi-Heyting algebras of level 1.

Recall that $\mathbf{DPCC}=\mathbf{DStC}$ (and $\mathbf{DPCHC}=\mathbf{DStHC}$).
So, we use these names 
 interchangeably.

\begin{Lemma} \label{Level1}
$\mathbf{DPCC} \models {\rm(Lev 1)}$.  
\end{Lemma}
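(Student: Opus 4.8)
The plan is to avoid a direct computation and instead inherit the identity from a larger variety in which (Lev 1) is already known. The key observation is that each generator of $\mathbf{DPCC}$ is automatically a dually Stone algebra. By Corollary \ref{Cor_DStC} we have $\mathbf{DPCC} = \mathbf{DStC}$, and every dually Stone semi-Heyting chain is, by definition, a member of the variety $\mathbf{DSt}$. Since $\mathbf{DSt}$ is a variety (closed under $\mathbf{H}$, $\mathbf{S}$, $\mathbf{P}$) containing all these chains, the variety they generate satisfies $\mathbf{DStC} \subseteq \mathbf{DSt}$.

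I would then invoke Theorem \ref{DSt_JID}(b), which asserts $\mathbf{DSt} \models {\rm (Lev\ 1)}$. Because (Lev 1) is an equation in the language of $\mathbf{DQD}$, it is inherited by every subvariety of $\mathbf{DSt}$; in particular it holds throughout $\mathbf{DPCC} = \mathbf{DStC}$. This delivers the claim in one step.

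If a self-contained argument is preferred, the same conclusion follows by verifying (Lev 1) directly on an arbitrary $\mathbf{DPC}$-chain $\mathbf{A}$ and then appealing to the standard fact that an identity holding on all generators holds on the generated variety. As recorded in the proof of Lemma \ref{DPCSHC_JID}, in such a chain one has $a' = 1$ whenever $a \neq 1$; a short case split on whether $x = 1$ then shows that both $x \land x'^*$ and $(x \land x'^*)'^*$ equal $1$ when $x = 1$ and equal $0$ otherwise, so the two sides of (Lev 1) coincide pointwise.

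I do not anticipate a genuine obstacle: the content is entirely carried by Corollary \ref{Cor_DStC} together with Theorem \ref{DSt_JID}(b), and the only point requiring a moment's care is the routine observation that an equational property transfers from a variety to each of its subvarieties (equivalently, from generators to the variety they generate).
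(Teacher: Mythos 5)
Your primary argument is correct but takes a genuinely different route from the paper. The paper proves the lemma by direct computation on an arbitrary $\mathbf{DPC}$-chain: since $x$ and $x'$ are comparable and $x \lor x' = 1$, either $x=1$ or $x'=1$, and then (Lev 1) is checked by exactly the case split you describe in your fallback paragraph (both sides equal $1$ when $x=1$, and both equal $0$ otherwise, since $x \neq 1$ forces $x'^* = 1^* = 0$); the identity then passes from the chains to the variety they generate. Your main route instead inherits the identity from the larger variety: every $\mathbf{DPC}$-chain is dually Stone, so by Corollary \ref{Cor_DStC} one has $\mathbf{DPCC} = \mathbf{DStC} \subseteq \mathbf{DSt}$, and Theorem \ref{DSt_JID}(b) already asserts $\mathbf{DSt} \models$ (Lev 1). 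There is no circularity here, since both ingredients are established in Sections 3 and 4, before this lemma; so your one-step derivation is valid and in effect exposes the lemma as an immediate corollary of earlier results. What your route buys is economy and a conceptual point: the level-1 property of $\mathbf{DPCC}$ comes from the dual Stone identity alone, not from the linearity of the generating chains. What the paper's route buys is self-containedness: it uses nothing beyond the elementary observation that $a'=1$ for $a \neq 1$ in a $\mathbf{DPC}$-chain, and so does not depend on the machinery of Theorem \ref{DSt_JID}.
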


\begin{proof}
Let $\mathbf{L}$ be a  $\mathbf{DPC}$-chain and 
let $x \in L$.  
Since $x, x'$ are comparable, we have  $x \lor x'  = x$ or $x \lor x' = x'$, implying 
$x=1 \ or \ x'=1$, as $x \lor x'=1.$ 
Then it is easy to see that (Lev 1) holds in $\mathbf{L}$, and hence in $\mathbf{DPCC}$.  
\end{proof}

\begin{Proposition}\label{Half_Join_Th}
 $\mathbf {DPCHC} \,\lor  \,\mathbf {V(D_2)} \, \subseteq \, \mathbf {JIDL_1}$.   
\end{Proposition}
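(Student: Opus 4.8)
The plan is to use the fact that the join $\mathbf{DPCHC} \lor \mathbf{V(D_2)}$ is, by definition, the smallest variety containing both $\mathbf{DPCHC}$ and $\mathbf{V(D_2)}$. Since $\mathbf{JIDL_1}$ is itself a variety, it therefore suffices to establish the two containments $\mathbf{DPCHC} \subseteq \mathbf{JIDL_1}$ and $\mathbf{V(D_2)} \subseteq \mathbf{JIDL_1}$ separately. Moreover, both $\mathbf{DPCHC}$ and $\mathbf{V(D_2)}$ are generated classes, and $\mathbf{JIDL_1}$ is defined by the identities (JID), (L) and (Lev 1); as a variety is closed under $H$, $S$ and $P$, it is enough to check that each generator of $\mathbf{DPCHC}$ (that is, each dually Stone Heyting chain) and the single algebra $\mathbf{D_2}$ satisfy those three identities.

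For the first containment I would argue as follows. Recalling (Corollary \ref{Cor_DStC} and the note preceding it) that every dually Stone Heyting chain is a $\mathbf{DPC}$-chain, Lemma \ref{DPCSHC_JID} supplies (JID) and Lemma \ref{Level1} supplies (Lev 1) on each such chain. The linear identity (L) holds because each generator is a Heyting chain: for comparable $x \leq y$ one has $x \to y = 1$, so $(x \to y) \lor (y \to x) = 1$ (the standard prelinearity of linearly ordered Heyting algebras; compare Proposition \ref{PH}). Hence each dually Stone Heyting chain lies in $\mathbf{JIDL_1}$, and consequently so does the entire variety $\mathbf{DPCHC}$ that these chains generate.

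For the second containment I would verify the three identities directly on the four-element algebra $\mathbf{D_2}$. That $\mathbf{D_2} \models {\rm (JID)}$ is already recorded (it is noted that $\mathbf{D_1}, \mathbf{D_2}, \mathbf{D_3}$ are $\mathbf{JID}$-algebras, and indeed $\mathbf{V(D_2)} = \mathbf{DMBH}$ by Proposition \ref{Prop_DQB}(b)). For (L), reading off the table of $\mathbf{D_2}$, the only nontrivial pair is $\{a,b\}$, where $a \to b = b$ and $b \to a = a$, so $(a \to b) \lor (b \to a) = a \lor b = 1$; every other pair involves $0$ or $1$ and is immediate. For (Lev 1) the cleanest route is to observe that $\mathbf{D_2}$ satisfies (SC): its non-unit elements are $0, a, b$, and $0 \land 0'^* = 0$, $a \land a'^* = a \land b = 0$, $b \land b'^* = b \land a = 0$. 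In any $\mathbf{DQD}$-algebra, (SC) forces the identity $x \land x'^* \approx (x \land x'^*)'^*$: if $x \neq 1$ then $x \land x'^* = 0$ and, using Lemma \ref{2.2}(i), $0'^* = 1^* = 1 \to 0 = 0$, while if $x = 1$ then both sides equal $1$ since $1'^* = 1$. Thus $\mathbf{D_2} \in \mathbf{JIDL_1}$, whence $\mathbf{V(D_2)} \subseteq \mathbf{JIDL_1}$.

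Combining the two containments yields $\mathbf{DPCHC} \lor \mathbf{V(D_2)} \subseteq \mathbf{JIDL_1}$, as desired. I do not anticipate a genuine obstacle: the statement is an assembly of the already-established Lemmas \ref{DPCSHC_JID} and \ref{Level1} together with a few short verifications on the table of $\mathbf{D_2}$. The only point requiring a little care is confirming (Lev 1) on $\mathbf{D_2}$, for which the (SC) argument above is preferable to a brute-force evaluation of $(x \land x'^*)'^*$ element by element.
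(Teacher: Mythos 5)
Your proposal is correct and takes essentially the same route as the paper: the paper likewise reduces the claim to the two containments, invokes Lemma \ref{DPCSHC_JID} and Lemma \ref{Level1} for (JID) and (Lev 1) on $\mathbf{DPCHC}$, notes that (L) is immediate for Heyting chains, and dismisses the check on $\mathbf{D_2}$ as routine. Your only addition is to carry out that routine check explicitly, with the (SC)-based verification of (Lev 1) on $\mathbf{D_2}$ being a clean way to do it.
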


\begin{proof}
It follows from Lemma \ref{DPCSHC_JID},  and Lemma \ref{Level1} that $\mathbf {DPCHC}$ satisfies (JID) and (Lev 1), and it is easy to see that $\mathbf{DPCHC} \models {\rm(L)}$.   
Also, it is routine to verify that (JID), (L)  and (Lev 1) hold in $\mathbf{D_2}$.  
\end{proof}

Next, we wish to prove the reverse inclusion of the above corollary.\\

{\bf Unless otherwise stated, in the rest of this section we assume that $\mathbf{L} \in 
\mathbf {JIDL_1}$ with $|L| > 2$ and satisfies (SC).}

\begin{Lemma} \label{Helper_Two}
Let $x,y \in L$.  Then, $x \lor y  \neq 1$ implies $ x \leq y'.$      
\end{Lemma}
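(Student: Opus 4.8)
The plan is to observe that this lemma is an almost immediate consequence of two facts already established: the $\lor$-De Morgan law of Theorem \ref{Theorem_105}, and the ``small element'' inequality of Lemma \ref{Crucial_small_Lemma}. Since $\mathbf{JIDL_1} \subseteq \mathbf{JID_1}$ and $\mathbf{L}$ satisfies (SC) by the standing hypothesis of this section, both of those results apply to $\mathbf{L}$ verbatim. Notably, the argument will not need the linear identity (L) at all; it uses only the $\mathbf{JID_1}$ structure together with (SC).

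First I would invoke the hypothesis $x \lor y \neq 1$, which says precisely that $x \lor y \in L \setminus \{1\}$. This is exactly the form required to apply Lemma \ref{Crucial_small_Lemma} to the element $x \lor y$, yielding $x \lor y \leq (x \lor y)'$. Next I would rewrite the right-hand side using the $\lor$-De Morgan law: since $\mathbf{L} \in \mathbf{JID} \subseteq \mathbf{Dms}$ by Theorem \ref{Theorem_105}, we have $(x \lor y)' = x' \land y' \leq y'$. Chaining these inequalities with the trivial bound $x \leq x \lor y$ gives
$$
x \leq x \lor y \leq (x \lor y)' = x' \land y' \leq y',
$$
which is the desired conclusion $x \leq y'$.

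There is no genuine obstacle here; the only step requiring a moment's care is confirming that the blanket assumptions needed for Lemma \ref{Crucial_small_Lemma} (namely membership in $\mathbf{JID_1}$ together with (SC)) are indeed inherited by the stronger standing hypothesis $\mathbf{L} \in \mathbf{JIDL_1}$ satisfying (SC). Once that inclusion is noted, the proof is a three-line computation, and I would present it in exactly the displayed chain above.
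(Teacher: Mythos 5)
Your proof is correct, and it takes a different (more modular) route than the paper's. The paper proves the lemma by a direct computation: from $(x \lor y)' \leq y'$ and Lemma \ref{L1}(3) it gets $y' \lor (x \lor y)'^* = 1$, then writes $x = x \land (x \lor y) \land [y' \lor (x \lor y)'^*]$, distributes, and kills the term $(x \lor y) \land (x \lor y)'^*$ by (SC), arriving at $x = x \land y'$. You instead reuse two previously established results: Lemma \ref{Crucial_small_Lemma} applied to the element $x \lor y \neq 1$ (giving $x \lor y \leq (x \lor y)'$) and the $\lor$-De Morgan law of Theorem \ref{Theorem_105} (giving $(x \lor y)' = x' \land y' \leq y'$), then chain $x \leq x \lor y \leq (x \lor y)' \leq y'$. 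The two arguments rest on the same underlying facts --- Lemma \ref{Crucial_small_Lemma} is itself proved from Lemma \ref{L1}(3), (SC), and distributivity, which is exactly what the paper's computation uses inline --- but yours buys brevity and makes the conceptual content transparent (a non-unit join sits below its own prime, which De Morgan pushes below $y'$), at the cost of invoking the heavier Theorem \ref{Theorem_105}, which the paper's self-contained computation avoids. Your check that the Section 5 standing hypotheses ($\mathbf{L} \in \mathbf{JID_1}$, $|L| \geq 2$, (SC)) are implied by the Section 6 standing hypotheses ($\mathbf{L} \in \mathbf{JIDL_1} \subseteq \mathbf{JID_1}$, $|L| > 2$, (SC)) is the right point to be careful about, and it goes through; like the paper, you correctly never need the linear identity (L).
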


\begin{proof}
Let $x \lor y \neq 1$; then, since $y' \lor (x \lor y)'^* \geq y' \lor y'^* =1$ by Lemma \ref{L1} (3),
we get $x=x \land (x \lor y) \land [y' \lor (x \lor y)'^*]= x \land [\{(x \lor y) \land y'\} \lor \{(x \lor y) \land (x \lor y)'^*\}] = x \land y'$ using (SC), whence  
$x  \leq y'$.  
\end{proof}

\begin{Lemma} \label{L3} 
Let $a,b \in L$ such that 
$a' \neq a$,  $a \neq 1$, 
and  $a \not\leq b$. 
Then  $(a \to b)''=0$.    
\end{Lemma}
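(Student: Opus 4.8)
The plan is to reduce the claim $(a \to b)'' = 0$ to the single equation $(a \to b)' = 1$, and then to pin that equation down using the trichotomy of Lemma \ref{Helper}. For the reduction I would use that $x''' = x'$ in every $\mathbf{DQD}$-algebra (Lemma \ref{2.2}(\ref{v})): if $x' = 1$ then $x'' = 1' = 0$, so it suffices to show $(a \to b)' = 1$.

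First I would squeeze information out of the hypotheses on $a$ alone. Since $a \neq 1$ and $a' \neq a$, the trichotomy of Lemma \ref{Helper} (``$x=1$ or $x'=1$ or $x=x'$'') leaves only $a' = 1$, and this single equation is what will power the whole argument. Next, writing $t := a \to b$, I would invoke (SH1) to get $a \land t = a \land b$; since $a \not\leq b$ we have $a \land b < a$, hence $a \land t < a$, which gives simultaneously $t \neq 1$ (otherwise $a \land t = a$) and $a \not\leq t$. Now I would apply Lemma \ref{Helper} to $t$ itself: the three cases are $t = 1$, $t' = 1$, and $t = t'$. The first is already excluded, and the second is precisely the goal.

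So everything reduces to ruling out the fixed-point case $t = t'$, and that is the step I expect to be the main obstacle. One reflex would be to invoke the classification in Proposition \ref{Theorem_DQDBSH} (an element fixed by $'$ in an algebra of size $>2$ forces $\mathbf{L}\in\{\mathbf{D_1,D_2,D_3}\}$), but I would prefer a direct computation. Assuming $t = t'$, from $a \not\leq t = t'$ the contrapositive of Lemma \ref{Helper_Two} yields $a \lor t = 1$. Applying $'$ together with the $\lor$-De Morgan law of Theorem \ref{Theorem_105} gives $0 = 1' = (a \lor t)' = a' \land t'$; substituting $a' = 1$ and $t' = t$ collapses this to $t = 0$. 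But $t = 0$ combined with $t = t'$ forces $0 = 0' = 1$, contradicting the nontriviality of $\mathbf{L}$.

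Having eliminated both $t = 1$ and $t = t'$, the trichotomy leaves $t' = 1$, i.e. $(a \to b)' = 1$, whence $(a \to b)'' = 0$ by the opening reduction. I would note that all three hypotheses are genuinely used: $a \neq 1$ and $a' \neq a$ to force $a' = 1$ in the second paragraph, and $a \not\leq b$ to produce the strict inequality $a \land t < a$ that both excludes $t = 1$ and feeds Lemma \ref{Helper_Two} in the fixed-point case.
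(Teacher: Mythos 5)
Your proof is correct, and while it rests on the same two pillars as the paper's --- the equation $a' = 1$ extracted from Lemma \ref{Helper}, and the contrapositive of Lemma \ref{Helper_Two} --- it organizes them around a genuinely different decomposition. The paper's proof has no case analysis at all: it shows $a \not\leq (a \to b)''$ directly from Lemma \ref{2.2} (\ref{xix}) (the inequality $x \land (x \to y)'' \leq y$), applies Lemma \ref{Helper_Two} to get $a \lor (a \to b)' = 1$, then applies $''$ to this equation using the $\mathbf{DQD}$ axiom $(x \lor y)'' \approx x'' \lor y''$, and finishes with $a'' = 0$ and $x''' = x'$. You instead run the trichotomy of Lemma \ref{Helper} on $t = a \to b$ itself: (SH1) and $a \not\leq b$ exclude $t = 1$, and the fixed-point case $t = t'$ is eliminated by Lemma \ref{Helper_Two} together with the full $\lor$-De Morgan law of Theorem \ref{Theorem_105} and $a' = 1$, leaving $t' = 1$. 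What the paper's route buys is economy: it needs only the $''$-form of De Morgan (an axiom of $\mathbf{DQD}$) rather than Theorem \ref{Theorem_105}, and avoids cases. What your route buys is elementarity of each step, replacing Lemma \ref{2.2} (\ref{xix}) by the axiom (SH1). It is worth noting that your argument can be compressed into a case-free hybrid: since $t'' \leq t$ in $\mathbf{DQD}$, your fact $a \not\leq t$ already implies the paper's $a \not\leq t''$, so Lemma \ref{Helper_Two} gives $a \lor t' = 1$ directly, whence $0 = (a \lor t')' = a' \land t'' = t''$ by Theorem \ref{Theorem_105} and $a' = 1$.
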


\begin{proof}

We claim that $a  \not \leq (a \to b)''$.  For, suppose $a  \leq (a \to b)''$; then $a = a \land (a \to b)'' \leq b$ by Lemma \ref{2.2} (\ref{xix}), 
implying $a \leq b$, which is a contradiction to 
 the hypothesis $a \not \leq b$.   
 Hence  $a  \not \leq (a \to b)''$.  Then  $a \lor (a \to b)' =1$ by (the contrapositive of) Lemma \ref{Helper_Two},   
whence   $a'' \lor (a \to b)''' = 1$, from which we conclude $(a \to b)'' = 0$, as $a'=1$ by Lemma \ref{Helper}.  
\end{proof}

We can now give an explicit description of subdirectly irreducible (=simple) algebras in $\mathbf{JIDL_1}$.  

\begin{Theorem}  \label{Th} 
Let $\mathbf{ L} \in \mathbf{DQD}$ with $|L| > 2$. 
Then the following are equivalent:
\begin{enumerate}
 \item[(1)] $\mathbf{ L}$ is a subdirectly irreducible algebra in $\mathbf{JIDL_1}$, 
 \item[(2)] $\mathbf{ L}$  is a simple algebra in $\mathbf{JIDL_1}$,
 \item[(3)] $\mathbf{ L} \in \mathbf{JIDL_1}$ such that {\rm (SC)} holds in $\mathbf{ L}$, 
 \item[(4)] $\mathbf{L}  \cong \mathbf{D_2}$, or $\mathbf{ L}$ is a $\mathbf{DStH}$-chain.
\end{enumerate}
\end{Theorem}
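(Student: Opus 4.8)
The plan is to obtain Theorem~\ref{Th} from the already-proved characterization of subdirectly irreducibles in $\mathbf{JID_1}$ (Theorem~\ref{Th_JID}) by feeding in the linear identity (L). Since $\mathbf{JIDL_1} \subseteq \mathbf{JID_1} \subseteq \mathbf{BDQD_1}$ — the middle inclusion holding because Theorem~\ref{Theorem_105} gives $\mathbf{JID} \subseteq \mathbf{Dms}$ and the $\lor$-De Morgan law specializes to (B) upon substituting $y \mapsto y^*$ — the equivalences $(1) \Leftrightarrow (2) \Leftrightarrow (3)$ follow at once from Theorem~\ref{Theorem_BDQD}, exactly as in Theorem~\ref{Corollary_4C}. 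Thus the entire content of the theorem is the equivalence $(3) \Leftrightarrow (4)$.

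For $(4) \Rightarrow (3)$: Proposition~\ref{Half_Join_Th} already places $\mathbf{D_2}$ and every $\mathbf{DStH}$-chain inside $\mathbf{JIDL_1}$, so I only need to verify (SC). For $\mathbf{D_2}$ this is the routine computation $a \land a'^* = a \land b = 0$ (and symmetrically for $b$, $0$); for a $\mathbf{DStH}$-chain, $x' = 1$ whenever $x \neq 1$ (as observed after Lemma~\ref{DPCSHC_JID}), whence $x'^* = 0$ and $x \land x'^* = 0$.

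The substantive direction is $(3) \Rightarrow (4)$. Here I would first apply Theorem~\ref{Th_JID}, implication $(c) \Rightarrow (d)$: since $\mathbf{L} \in \mathbf{JID_1}$ satisfies (SC), either $\mathbf{L} \in \{\mathbf{D_1}, \mathbf{D_2}, \mathbf{D_3}\}$, or $\mathbf{L} \in \mathbf{DSt}$ and satisfies (SC). Because $\mathbf{L} \models {\rm (L)}$, Proposition~\ref{PH} forces $\mathbf{L}$ to be a Heyting algebra. In the first alternative, a quick check that $\mathbf{D_1}$ and $\mathbf{D_3}$ fail (H) (e.g.\ $(a \land b) \to b = 0 \to b = a \neq 1$) leaves $\mathbf{L} \cong \mathbf{D_2}$. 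In the second, $\mathbf{L}$ is a $\mathbf{DStH}$-algebra satisfying (SC) and (L), and it remains to show its lattice reduct is a chain.

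This chain step is where the real work lies, and I would settle it by contradiction using the two lemmas of this section. Suppose $a, b$ are incomparable; then $a, b \in L \setminus \{0, 1\}$, so $a' = b' = 1$ (using the $\mathbf{DSt}$ plus (SC) structure, equivalently Lemma~\ref{Helper} in the relevant case), and in particular $a' \neq a$ and $b' \neq b$. Applying Lemma~\ref{L3} to the pair $(b, a)$ yields $(b \to a)'' = 0$, while Lemma~\ref{L4_10}(a) gives $(a \to b) \lor (b \to a)'' = 1$; combining these forces $a \to b = 1$, i.e.\ $a \leq b$ in the Heyting algebra, contradicting incomparability. Hence $\mathbf{L}$ is a $\mathbf{DStH}$-chain. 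I expect this contradiction — marrying Lemma~\ref{L3}, which annihilates the double negation of a nontrivial implication, with the linearity consequence Lemma~\ref{L4_10}(a) — to be the crux of the argument; everything else is bookkeeping layered on top of Theorem~\ref{Th_JID} and the observation that (L) enforces the Heyting property.
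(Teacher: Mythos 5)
Your proof is correct and follows essentially the same route as the paper: the equivalences (1)--(3) via Theorem \ref{Th_JID} (ultimately Theorem \ref{Theorem_BDQD}), the dichotomy from Theorem \ref{Th_JID}(c)$\Rightarrow$(d) combined with Proposition \ref{PH} to isolate $\mathbf{D_2}$, and the chain step via Lemmas \ref{L3} and \ref{L4_10}. The only differences are cosmetic: the paper proves the chain step contrapositively using Lemma \ref{L4_10}(b) (from $(a\to b)''=0$ and $b \leq a \lor (a \to b)''$ one gets $b\leq a$ directly) where you argue by contradiction with Lemma \ref{L4_10}(a), and it invokes Theorem \ref{DSt_JID}(c) rather than Proposition \ref{Half_Join_Th} for (4)$\Rightarrow$(3).
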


\begin{proof} 

(1) $ \Leftrightarrow  {\rm(2)}  \Leftrightarrow \rm{(3)}$ follow from Theorem \ref{Th_JID}.
So we need to prove (3) $\Rightarrow$ (4) $\Rightarrow$ (3).     
Suppose (3) holds.  
Then, by Theorem \ref{Th_JID},  either $\mathbf{L} \in \{\mathbf{D_1}, \mathbf{D_2}, \mathbf{D_3}\}$, or $\mathbf{L} \in \mathbf{DSt}$ 
and satisfies {\rm (SC)}.   
In the former case, since $\mathbf{L} \models {\rm(L)}$, it follows from Proposition \ref{PH} that $\mathbf{L}\models {\rm(H)}$.  Hence $\mathbf{L} \cong \mathbf{D_2}$.  Next, we assume $\mathbf{L} \in \mathbf{DSt}$  and satisfies {\rm (SC)}.  
Since $\mathbf{L} \models {\rm(L)}$ by hypothesis, we get, by Proposition \ref{PH}, that $\mathbf{L} \in \mathbf{DStH}$. 
So, we need only prove that $\mathbf{L}$ is a chain.  Let $a, b \in L \setminus \{1\}$ such that 
$a \not \leq b$.  Since $\mathbf{L} \models {\rm (DSt)}$,   
it is clear that $a' \neq a$.      
Then, from Lemma  \ref{L4_10}(b),     
we have that $b \leq a \lor (a \to b)''$, which, in view of Lemma \ref{L3}, implies             
  $b \leq a$.  
 Hence, the lattice reduct of $\mathbf{L}$ is a chain,  
 and so, (3) $\Rightarrow$ (4).  
 Finally, assume (4).   
 First, if $\mathbf{L} \cong \mathbf{D_2}$, then clearly (3) holds.   
 Next, suppose $\mathbf{L}$ is a $\mathbf{DStH}$-chain.  Then, $x' \leq x''$ or $x'' \leq x'$, implying $x' \land x''=x'$ or $x' \land x'' =x''$.  Hence, by (DSt), we get $x'=0$ or $x'=1$, from which it is easy to see that $\mathbf{L}$ satisfies (SC).  Now,  from Theorem \ref{DSt_JID}, we conclude that $\mathbf{L} \in \mathbf{JID_1}$.  Also, it is well known that Heyting chains satisfy (L).  Thus, $\mathbf{L} \in \mathbf{JID_1}$ and $\mathbf{L}$ satisfies (SC), implying (3).   
\end{proof}

The following corollary is immediate from Theorem \ref{Th}.
\begin{Corollary} \label{C1}
$\mathbf{JIDL_1} 
=  \mathbf{DStHC}\  \lor\  \mathbf{V(D_2)}$. 
\end{Corollary}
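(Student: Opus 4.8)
The plan is to derive Corollary \ref{C1} directly from the characterization of subdirectly irreducible algebras given in Theorem \ref{Th}, using the standard fact that a variety is determined by its subdirectly irreducible members. Since every variety is generated by its subdirectly irreducibles (Birkhoff), I would argue that the two varieties $\mathbf{JIDL_1}$ and $\mathbf{DStHC} \lor \mathbf{V(D_2)}$ coincide by showing they have the same subdirectly irreducible algebras, or equivalently by a two-sided inclusion argument.

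First I would establish the inclusion $\mathbf{DStHC} \lor \mathbf{V(D_2)} \subseteq \mathbf{JIDL_1}$. This is already done: it is precisely Proposition \ref{Half_Join_Th}, since $\mathbf{DPCHC} = \mathbf{DStHC}$ by Corollary \ref{Cor_DStC} (in the Heyting case). So for this half I would simply cite Proposition \ref{Half_Join_Th}.

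For the reverse inclusion $\mathbf{JIDL_1} \subseteq \mathbf{DStHC} \lor \mathbf{V(D_2)}$, I would appeal to the fact that $\mathbf{JIDL_1}$ is generated by its subdirectly irreducible members. By Theorem \ref{Th}, every subdirectly irreducible algebra $\mathbf{L} \in \mathbf{JIDL_1}$ with $|L| > 2$ is either isomorphic to $\mathbf{D_2}$ or is a $\mathbf{DStH}$-chain; the two-element case is the trivial subdirectly irreducible Boolean/Heyting algebra, which lies in both $\mathbf{V(D_2)}$ and $\mathbf{DStHC}$. In either case, such an $\mathbf{L}$ belongs to $\mathbf{V(D_2)} \subseteq \mathbf{DStHC} \lor \mathbf{V(D_2)}$ or to $\mathbf{DStHC} \subseteq \mathbf{DStHC} \lor \mathbf{V(D_2)}$ (a $\mathbf{DStH}$-chain generates a subvariety of $\mathbf{DStHC}$ by definition, since $\mathbf{DStHC}$ is the variety generated by all dually Stone Heyting chains). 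Hence every subdirectly irreducible member of $\mathbf{JIDL_1}$ lies in $\mathbf{DStHC} \lor \mathbf{V(D_2)}$, and since $\mathbf{JIDL_1}$ is the variety generated by these subdirectly irreducibles, we conclude $\mathbf{JIDL_1} \subseteq \mathbf{DStHC} \lor \mathbf{V(D_2)}$.

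I do not anticipate a serious obstacle here, as the heavy lifting is entirely contained in Theorem \ref{Th}; the corollary is essentially a formal consequence of the subdirect representation theorem together with that classification. The one point requiring mild care is the handling of the two-element algebra and the bookkeeping that a $\mathbf{DStH}$-chain indeed sits inside $\mathbf{DStHC}$ (which is immediate from the definition of $\mathbf{DStHC}$ as the variety generated by dually Stone Heyting chains), but these are routine. Combining the two inclusions yields the claimed equality $\mathbf{JIDL_1} = \mathbf{DStHC} \lor \mathbf{V(D_2)}$.
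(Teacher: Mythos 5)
Your proof is correct and follows exactly the paper's route: the paper's own proof of this corollary is simply ``Use Proposition \ref{Half_Join_Th} and Theorem \ref{Th},'' i.e., Proposition \ref{Half_Join_Th} for the inclusion $\mathbf{DStHC} \lor \mathbf{V(D_2)} \subseteq \mathbf{JIDL_1}$ and the classification of subdirectly irreducibles in Theorem \ref{Th} (via Birkhoff's subdirect representation theorem) for the reverse inclusion. Your additional attention to the two-element algebra is a reasonable bit of bookkeeping that the paper leaves implicit.
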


\begin{proof}
Use Proposition \ref{Half_Join_Th} and Theorem \ref{Th}.
\end{proof}

We would like to mention here that the attempt to solve the problem of axiomatization of $ \mathbf{DStHC}\  \lor \ \mathbf{V(D_2)}$ led to the results of this paper,  with Corollary \ref{C1} yielding a solution thereof.

\vspace{1cm}
\section{More Consequences of Theorem \ref{Th}}

In this section we present some more consequences of Theorem \ref{Th}.  

As mentioned earlier, the axiomatizations of the variety $\mathbf {DPCHC}$ 
 and all of its subvarieties were given in \cite{Sa11}.  
 
 The following corollary is immediate from Corollary \ref{C1} and Theorem \ref{Subvarlat_DStHC}. 

\begin{Corollary} 
\begin{thlist}
\item[1]
  $\mathcal{L}\mathbf{(JIDL_1)} \cong \mathbf{1} \oplus [\mathbf{(\omega + 1)} \times \mathbf{2}]$. \item[2]
   $\mathbf{JIDL_1}$ and $\mathbf{DStHC}$ are the only two elements of infinite height in the lattice $\mathcal{L}\mathbf{(JIDL_1)}$.
\item[3] 
    $\mathbf{V} \in  \mathcal{L^+} \mathbf{(JIDL_1)}$ is of finite height iff $\mathbf{V}$ is either $\mathbf{V(D_2)}$ or $\mathbf{V(C_n^{dp})}$, for some $n \in \mathbb{N} \setminus \{1\}$, or $\mathbf{V(C_m^{dp})}  \, \lor \, \mathbf{V(D_2)}$, for some $m \in \mathbb{N} \setminus \{1\}$.     
\end{thlist}
\end{Corollary}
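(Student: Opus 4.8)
The plan is to read the subvariety lattice directly off the classification of subdirectly irreducibles in Theorem \ref{Th}, combined with the chain structure of $\mathcal{L}(\mathbf{DStHC})$ from Theorem \ref{Subvarlat_DStHC}, and then to extract parts (2) and (3) as immediate consequences of the resulting poset. Since the members of $\mathbf{JIDL_1}$ have distributive lattice reducts, $\mathbf{JIDL_1}$ is congruence-distributive, so each subvariety is generated by its subdirectly irreducible members and, by J\'onsson's Lemma, the s.i. members of a subvariety lie in $\mathbf{HSP}_U$ of any generating set. By Theorem \ref{Th} the nontrivial s.i. algebras of $\mathbf{JIDL_1}$ are exactly $\mathbf{D_2}$ and the $\mathbf{DStH}$-chains. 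I would first record two separation facts about $\mathbf{D_2}$: (i) its only nontrivial proper subalgebra is $\mathbf{C_2^{dp}}$ (the two-element chain $\{0,1\}$) and $\mathbf{D_2}$ is simple, so that among the s.i.'s $\mathbf{HS}(\mathbf{D_2})$ contributes only $\mathbf{D_2}$ and $\mathbf{C_2^{dp}}$, giving $\mathbf{V(C_2^{dp})} \subsetneq \mathbf{V(D_2)}$; and (ii) since $\mathbf{D_2}$ is not a chain while subalgebras and quotients of chains are chains, $\mathbf{D_2} \notin \mathbf{V(C_n^{dp})}$ for every $n$ and $\mathbf{C_n^{dp}} \notin \mathbf{V(D_2)}$ for $n \geq 3$.

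With these facts in hand, every subvariety $\mathbf{V}$ of $\mathbf{JIDL_1}$ splits cleanly. Put $\mathbf{W} := \mathbf{V} \cap \mathbf{DStHC}$, the subvariety generated by the chain s.i.'s of $\mathbf{V}$; by Theorem \ref{Subvarlat_DStHC}, $\mathbf{W}$ is one of $\mathbf{V(C_n^{dp})}$ $(n \geq 1)$ or $\mathbf{DStHC}$. Using that the s.i.'s of a join are the union of the s.i.'s of the joinands, one checks that $\mathbf{V} = \mathbf{W}$ if $\mathbf{D_2} \notin \mathbf{V}$, and $\mathbf{V} = \mathbf{W} \lor \mathbf{V(D_2)}$ otherwise. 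Thus every nontrivial subvariety is $\mathbf{W}$ or $\mathbf{W} \lor \mathbf{V(D_2)}$ with $\mathbf{W}$ ranging over $\mathcal{L}^+(\mathbf{DStHC})$, which by Theorem \ref{Subvarlat_DStHC} is the $(\omega+1)$-chain $\mathbf{V(C_2^{dp})} < \mathbf{V(C_3^{dp})} < \cdots < \mathbf{DStHC}$. Defining $\Phi(\mathbf{W},0) = \mathbf{W}$ and $\Phi(\mathbf{W},1) = \mathbf{W} \lor \mathbf{V(D_2)}$ gives a map $\mathcal{L}^+(\mathbf{DStHC}) \times \mathbf{2} \to \mathcal{L}^+(\mathbf{JIDL_1})$; the separation facts (i), (ii) show it is an order isomorphism (the single coincidence $\mathbf{V(C_2^{dp})} \lor \mathbf{V(D_2)} = \mathbf{V(D_2)}$ is absorbed by setting $\Phi(\mathbf{V(C_2^{dp})},1) = \mathbf{V(D_2)}$, and $\mathbf{W} \mapsto \mathbf{W} \lor \mathbf{V(D_2)}$ is injective because $\mathbf{W}$ is recovered as the chain-part of the s.i.'s). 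Since $\mathcal{L}^+(\mathbf{DStHC}) \cong \omega+1$, adjoining the trivial variety as a new bottom yields $\mathcal{L}(\mathbf{JIDL_1}) \cong \mathbf{1} \oplus [(\omega+1) \times \mathbf{2}]$, proving part (1).

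Parts (2) and (3) are then read off the poset $(\omega+1) \times \mathbf{2}$. An element has infinite height exactly when its first coordinate is $\omega$, namely $(\omega,0)$ and $(\omega,1)$, which correspond to $\mathbf{DStHC}$ and $\mathbf{DStHC} \lor \mathbf{V(D_2)} = \mathbf{JIDL_1}$ (using Corollary \ref{C1}); every other nontrivial element lies above only finitely many elements and so has finite height. This is part (2), and part (3) is its complement together with the explicit enumeration from part (1): the finite-height nontrivial subvarieties are precisely $\Phi(\mathbf{V(C_n^{dp})},0) = \mathbf{V(C_n^{dp})}$ for $n \geq 2$, the value $\Phi(\mathbf{V(C_2^{dp})},1) = \mathbf{V(D_2)}$, and $\Phi(\mathbf{V(C_m^{dp})},1) = \mathbf{V(C_m^{dp})} \lor \mathbf{V(D_2)}$ for $m \geq 2$.

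The one genuinely non-routine step, which I expect to be the main obstacle, is establishing the separation facts (i) and (ii) and deducing from them that $\Phi$ is both injective and order-reflecting; that is, confirming that the $\mathbf{D_2}$-component is essentially independent of the chain component apart from the shared Boolean atom $\mathbf{V(C_2^{dp})}$. Once this independence is secured, everything else is a direct transcription of Theorem \ref{Subvarlat_DStHC} and Corollary \ref{C1} into the product poset.
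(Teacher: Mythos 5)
Your proposal is correct and takes essentially the same approach as the paper: the paper derives this corollary as ``immediate'' from Corollary \ref{C1} and Theorem \ref{Subvarlat_DStHC}, and your argument is precisely that derivation with the implicit bookkeeping written out --- the classification of subdirectly irreducibles (Theorem \ref{Th}), the J\'onsson-lemma splitting of an arbitrary subvariety $\mathbf{V}$ into $\mathbf{W}$ or $\mathbf{W} \lor \mathbf{V(D_2)}$ with $\mathbf{W} = \mathbf{V} \cap \mathbf{DStHC}$, and the separation of $\mathbf{D_2}$ from the chain varieties. The verification details you supply (simplicity of $\mathbf{D_2}$, its unique proper nontrivial subalgebra $\mathbf{C_2^{dp}}$, and recovery of $\mathbf{W}$ as $(\mathbf{W} \lor \mathbf{V(D_2)}) \cap \mathbf{DStHC}$) are exactly what is needed to make the paper's one-line claim rigorous.
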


In Corollaries \ref{cor_6_2}-\ref{cor_6_5}, we give equational bases to all subvarieties of $\mathbf {JIDL_1}$. 
  
\begin{Corollary} \label{cor_6_2}
The variety  $\mathbf {DStHC}$ is defined, modulo $\mathbf {JIDL_1}$, by 
\begin{enumerate}
\item[ ] $x \lor x' \approx 1$.
\end{enumerate}
\end{Corollary}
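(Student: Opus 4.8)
The plan is to establish the two inclusions $\mathbf{DStHC} \subseteq \mathbf{W}$ and $\mathbf{W} \subseteq \mathbf{DStHC}$, where $\mathbf{W}$ denotes the subvariety of $\mathbf{JIDL_1}$ defined by the identity $x \lor x' \approx 1$ (which is just condition (e) of the Definition). The engine driving both inclusions is the characterization of the subdirectly irreducible members of $\mathbf{JIDL_1}$ given in Theorem \ref{Th}, together with the fact that a variety is determined by its subdirectly irreducible algebras.

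For the inclusion $\mathbf{DStHC} \subseteq \mathbf{W}$, I would first recall that $\mathbf{DStHC} \subseteq \mathbf{JIDL_1}$ by Proposition \ref{Half_Join_Th}, so it remains only to check that every member of $\mathbf{DStHC}$ satisfies $x \lor x' \approx 1$. Since $\mathbf{DStHC} \subseteq \mathbf{DSt}$, and it was observed right after the Definition that (DSt) forces (e), i.e. $x \lor x' \approx 1$, this is immediate; alternatively, as identities are preserved under $H$, $S$, $P$, it suffices to verify it on the generating dually Stone Heyting chains, where it is obvious.

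For the reverse inclusion $\mathbf{W} \subseteq \mathbf{DStHC}$, the key step is to identify the subdirectly irreducible algebras of $\mathbf{W}$. Since subdirect irreducibility is an absolute property of an algebra, any subdirectly irreducible $\mathbf{L} \in \mathbf{W}$ with $|L| > 2$ is in particular a subdirectly irreducible member of $\mathbf{JIDL_1}$, hence by Theorem \ref{Th} it is either isomorphic to $\mathbf{D_2}$ or is a $\mathbf{DStH}$-chain. Here the decisive observation is that $\mathbf{D_2} \notin \mathbf{W}$: in $\mathbf{D_2}$ one has $a' = a$, so $a \lor a' = a \neq 1$, and the identity $x \lor x' \approx 1$ fails. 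Consequently every nontrivial subdirectly irreducible member of $\mathbf{W}$ is a $\mathbf{DStH}$-chain (the at most two-element algebras also lying in $\mathbf{DStHC}$), and since every algebra in $\mathbf{W}$ is a subdirect product of such subdirectly irreducible quotients by Birkhoff's theorem, we conclude $\mathbf{W} \subseteq \mathbf{DStHC}$.

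I do not anticipate a genuine obstacle: the entire argument rests on Theorem \ref{Th} having already carried out the structural work. The one point demanding care is the clean exclusion of $\mathbf{D_2}$, which is precisely what separates $\mathbf{DStHC}$ from the full join $\mathbf{DStHC} \lor \mathbf{V(D_2)} = \mathbf{JIDL_1}$ of Corollary \ref{C1}; everything else is routine bookkeeping with the subdirect decomposition.
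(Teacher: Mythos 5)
Your proposal is correct and takes essentially the same approach as the paper: the paper's own proof just observes that $\mathbf{DStHC} \models x \lor x' \approx 1$ while $\mathbf{V(D_2)} \not\models x \lor x' \approx 1$ and then applies Theorem \ref{Th}, which is exactly your argument with the subdirect-representation bookkeeping left implicit. Your explicit exclusion of $\mathbf{D_2}$ (via $a' = a$, so $a \lor a' = a \neq 1$) and your handling of the at-most-two-element algebras are simply a fuller write-up of the same idea.
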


\begin{proof} 
Observe that $\mathbf {DStHC} \models x \lor x' \approx 1$, but $\mathbf {V(D_2)} \not\models x \lor x' \approx 1$, and then apply 
Theorem \ref{Th}.
\end{proof}

The variety $\mathbf {V(D_2)}$ was axiomatized in \cite{Sa11}.  Here is a new one.
\begin{Corollary}\label{cor_6_3}
The variety $\mathbf {V(D_2)}$ is defined, modulo $\mathbf {JIDL_1}$, by 
\begin{enumerate}
\item[ ] $x'' \approx x$.
\end{enumerate}
\end{Corollary}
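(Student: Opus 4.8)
The claim is that the variety $\mathbf{V(D_2)}$ is defined, relative to $\mathbf{JIDL_1}$, by the single De Morgan identity $x'' \approx x$. Let me sketch how I would establish this.

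The plan is to use the characterization of subdirectly irreducible algebras in $\mathbf{JIDL_1}$ provided by Theorem \ref{Th}, exactly as was done for the companion result Corollary \ref{cor_6_2}. The strategy rests on the fact that a subvariety of $\mathbf{JIDL_1}$ is determined by which of the subdirectly irreducible members of $\mathbf{JIDL_1}$ it contains, and by Theorem \ref{Th} these are precisely $\mathbf{D_2}$ together with the $\mathbf{DStH}$-chains. So the proof reduces to checking, for the candidate identity $x'' \approx x$, that it holds in $\mathbf{D_2}$ but fails in every nontrivial $\mathbf{DStH}$-chain other than the Boolean one(s), and then invoking the general principle.

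First I would verify that $\mathbf{V(D_2)} \models x'' \approx x$. This is immediate since $\mathbf{D_2} \in \mathbf{DMBH}$ by Proposition \ref{Prop_DQB}(b), and $\mathbf{DM}$-algebras satisfy (DM), i.e. $x'' \approx x$; as (DM) is an identity, it is preserved under $\mathbf{H}$, $\mathbf{S}$, $\mathbf{P}$ and hence holds throughout $\mathbf{V(D_2)}$. Second, I would show that no $\mathbf{DStH}$-chain of size $\geq 3$ satisfies $x'' \approx x$: in a $\mathbf{DStH}$-chain, as noted in the proof of Theorem \ref{Th}, every element $x$ satisfies $x'=0$ or $x'=1$, so $x''=1$ or $x''=0$; thus $x'' \approx x$ forces every element to be $0$ or $1$, which is impossible in a chain with an intermediate element. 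Hence the only subdirectly irreducible algebras of $\mathbf{JIDL_1}$ satisfying $x'' \approx x$ are $\mathbf{D_2}$ and the $2$-element chain $\mathbf{C_2^{dp}}$ (which is the Boolean Heyting algebra $\cong \mathbf{D_2}$'s Boolean core and lies in $\mathbf{V(D_2)}$). Therefore the subdirectly irreducibles of $\mathbf{JIDL_1}$ satisfying $x'' \approx x$ are exactly those generating $\mathbf{V(D_2)}$.

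The final step combines these: any $\mathbf{L} \in \mathbf{JIDL_1}$ with $\mathbf{L} \models x'' \approx x$ is a subdirect product of subdirectly irreducible quotients, each of which (being an S.I. member of $\mathbf{JIDL_1}$ satisfying the inherited identity $x'' \approx x$) must be $\mathbf{D_2}$ or $\mathbf{C_2^{dp}}$, hence lies in $\mathbf{V(D_2)}$; thus $\mathbf{L} \in \mathbf{V(D_2)}$. Conversely $\mathbf{V(D_2)} \subseteq \mathbf{JIDL_1}$ by Corollary \ref{C1} (or Proposition \ref{Half_Join_Th}) and satisfies $x'' \approx x$ by the first step. I do not expect any serious obstacle here; the one point requiring minor care is confirming that the $2$-element chain is absorbed into $\mathbf{V(D_2)}$ rather than standing as a separate generator, but this is clear since the $2$-element Boolean Heyting algebra is a subalgebra (the $\{0,1\}$ reduct) of $\mathbf{D_2}$. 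The argument is thus a direct mirror of the proof of Corollary \ref{cor_6_2}, replacing the identity $x \lor x' \approx 1$ by $x'' \approx x$ and the role of separating $\mathbf{DStHC}$ from $\mathbf{V(D_2)}$ by separating $\mathbf{V(D_2)}$ from the non-Boolean $\mathbf{DStH}$-chains.
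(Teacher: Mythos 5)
Your proof is correct and takes essentially the same route as the paper: verify the identity $x'' \approx x$ on the subdirectly irreducible algebras supplied by Theorem \ref{Th} (it holds in $\mathbf{D_2}$, fails in every $\mathbf{DStH}$-chain with more than two elements) and conclude by subdirect decomposition. You are in fact slightly more careful than the paper's one-line proof, which only observes that $\mathbf{DStHC} \not\models x'' \approx x$ while $\mathbf{V(D_2)} \models x'' \approx x$ before invoking Theorem \ref{Th}; your explicit check that the $2$-element chain $\mathbf{C_2^{dp}}$ satisfies the identity but is absorbed into $\mathbf{V(D_2)}$ as the $\{0,1\}$-subalgebra of $\mathbf{D_2}$ makes precise a point the paper leaves implicit.
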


\begin{proof} 
Observe that $\mathbf {DStHC}\not \models x'' \approx x$, but $\mathbf {V(D_2)} \models x'' \approx x$, and then use Theorem \ref{Th}.
\end{proof}
 
\begin{Corollary} \label{cor_6_4}
Let $n \geq 2$.  
The variety $\mathbf {V(C_n^{dp})}\, \lor \, \mathbf {V(D_2)}$ is defined, modulo $\mathbf {JIDL_1}$, by 
\begin{enumerate}
\item[{\rm(C$_n$)}] $x_1  \lor  x_2  \lor \cdots \lor x_n  \lor  (x_1 \to x_2)  \lor (x_2 \to x_3) \lor \cdots 
\lor (x_{n-1} \to x_n)=1.$
\end{enumerate}
\end{Corollary}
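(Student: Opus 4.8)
The plan is to compare the two subvarieties of $\mathbf{JIDL_1}$ through their subdirectly irreducible members, exploiting the explicit list furnished by Theorem \ref{Th}. Write $\mathbf{W}$ for the subvariety of $\mathbf{JIDL_1}$ cut out by the extra identity $(C_n)$, and set $\mathbf{V}:=\mathbf{V(C_n^{dp})}\lor\mathbf{V(D_2)}$; the goal is $\mathbf{V}=\mathbf{W}$. Since every algebra is a subdirect product of its subdirectly irreducible quotients (Birkhoff), each of $\mathbf{V},\mathbf{W}$ is generated by its own subdirectly irreducible members, so it suffices to check that $\mathbf{V}$ and $\mathbf{W}$ have the \emph{same} subdirectly irreducible members. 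By Theorem \ref{Th} every subdirectly irreducible member of $\mathbf{JIDL_1}$ is, up to isomorphism, either $\mathbf{D_2}$ or a $\mathbf{DStH}$-chain, so the whole question reduces to deciding, for each such algebra, whether it satisfies $(C_n)$ and whether it lies in $\mathbf{V}$.

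First I would record two evaluation facts. \emph{(i)} $\mathbf{D_2}\models(C_n)$ for every $n\ge 2$: this is a finite check on the four-element algebra. If all the terms $x_1,\dots,x_n,\,x_1\to x_2,\dots,x_{n-1}\to x_n$ were $<1$, their join would be $<1$; but $a\lor b=1$ in $\mathbf{D_2}$, so the family would have to lie entirely in $\{0,a\}$ or entirely in $\{0,b\}$. In the first case $0\to y=1$ forces $x_1=a$, and then $a\to x_2\in\{b,1\}$ from the table yields a term equal to $b$ or to $1$, either way a contradiction; the automorphism interchanging $a$ and $b$ settles the second case. \emph{(ii)} For a $\mathbf{DStH}$-chain $\mathbf{C}$ one has $\mathbf{C}\models(C_n)$ iff $|C|\le n$. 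Indeed, on a Heyting chain $u\to v=1$ if $u\le v$ and $u\to v=v$ if $u>v$, so the left-hand side of $(C_n)$ is $<1$ exactly when $x_1>x_2>\cdots>x_n$ is a strictly decreasing sequence of elements all below $1$; such a sequence exists precisely when there are at least $n$ elements below $1$, i.e. when $|C|\ge n+1$ (and every infinite chain admits one). Thus $(C_n)$ holds in $\mathbf{C}$ exactly when $\mathbf{C}\cong\mathbf{C_m^{dp}}$ with $m\le n$.

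Combining (i) and (ii), the subdirectly irreducible members of $\mathbf{W}$ are precisely $\mathbf{D_2}$ together with the chains $\mathbf{C_m^{dp}}$ for $m\le n$. It then remains to run the two inclusions. For $\mathbf{V}\subseteq\mathbf{W}$: both generators $\mathbf{C_n^{dp}}$ and $\mathbf{D_2}$ satisfy $(C_n)$ by (i) and (ii), an identity is inherited by $\mathbf{H},\mathbf{S},\mathbf{P}$, so $\mathbf{V}\models(C_n)$; since $\mathbf{V}\subseteq\mathbf{JIDL_1}$ by Corollary \ref{C1}, this gives $\mathbf{V}\subseteq\mathbf{W}$. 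For $\mathbf{W}\subseteq\mathbf{V}$: each subdirectly irreducible member of $\mathbf{W}$ already lies in $\mathbf{V}$, because $\mathbf{D_2}\in\mathbf{V(D_2)}\subseteq\mathbf{V}$ and, for $m\le n$, $\mathbf{C_m^{dp}}\in\mathbf{V(C_m^{dp})}\subseteq\mathbf{V(C_n^{dp})}\subseteq\mathbf{V}$ by the $\omega+1$-chain of Theorem \ref{Subvarlat_DStHC}; as $\mathbf{W}$ is generated by its subdirectly irreducibles, $\mathbf{W}\subseteq\mathbf{V}$, completing the proof.

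The genuinely computational step, and the main obstacle, is the chain evaluation (ii): one must verify carefully that the top element $1$ is \emph{avoided} in $(C_n)$ exactly when the arguments can be arranged into a strictly decreasing chain of length $n$ below $1$, and then translate this into the clean size bound $|C|\le n$ (including the observation that every infinite chain fails $(C_n)$). The verification (i) for $\mathbf{D_2}$ is routine but needs a little care because its lattice reduct is not a chain; once (i) and (ii) are in hand, the rest is bookkeeping with Theorems \ref{Th} and \ref{Subvarlat_DStHC} and Corollary \ref{C1}.
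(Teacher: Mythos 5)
Your proof is correct and follows essentially the same route as the paper's: verify that $\mathbf{C_n^{dp}}$ (more generally, chains of size $\le n$) and $\mathbf{D_2}$ satisfy {\rm(C$_n$)}, then use the classification of subdirectly irreducible algebras in $\mathbf{JIDL_1}$ (Theorem \ref{Th}) to show any subdirectly irreducible algebra satisfying {\rm(C$_n$)} is $\mathbf{D_2}$ or a $\mathbf{DStH}$-chain with at most $n$ elements, whence the two varieties coincide. The only cosmetic difference is that you justify the reduction to subdirectly irreducibles via Birkhoff's subdirect representation, while the paper invokes the discriminator property (Corollary \ref{Discr}) and works with simple algebras; your write-up also makes explicit the finite check for $\mathbf{D_2}$ and the use of Theorem \ref{Subvarlat_DStHC} that the paper leaves implicit.
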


\begin{proof}
Let $\mathbf{C_n^{dp}}$ be the $\mathbf{DPC}$-chain such that\\
 $C_n^{dp} = \{0, a_1, a_2,  \dots, a_{n-2}, 1\}$, where
 $0 < a_1 < a_2 <  \cdots < a_{n-2} < 1$.  We now prove that $\mathbf{C_n^{dp}} \models {\rm (C_n)}$.
Let $\langle c_1, c_2, \dots , c_n \rangle \in C_n^{dp}$ be an arbitrary assignment in $C_n^{dp}$ for the variables such that $c_i$ is the value of $x_i$, for $i=1, \cdots, n$.
If $c_i \leq c_{i+1}$ for some $i$, then $c_i \to c_{i+1} = 1$,
as $\mathbf{L}$ has a Heyting reduct, and hence, the identity holds in $\mathbf{C_n^{dp}}$. So, we assume
that $c_i > c_{i+1}$, for $i = 1,2, \cdots , n$. Then, $c_1 = 1$ since $|C_n^{dp}| = n$, implying that 
 (C$_n$) holds in $\mathbf{C_n^{dp}}$.  Also, observe that $\mathbf {V(D_2)} \models {\rm (C_n)}$. Next, suppose that $\mathbf{V}$ is the subvariety of 
 $\mathbf{JIDL_1}$ satisfying (C$_n$). Then, by Corollary \ref{Discr}, $\mathbf{V}$ is a discriminator variety. So, let $\mathbf{L}$ be a simple algebra in $\mathbf{V}$. Then, it follows from Corollary 
 \ref{C1} (or Theorem \ref{Th}) that the semi-Heyting reduct of $\mathbf{L}$ is a Heyting chain or $\mathbf{L} \cong \mathbf{D_2}$.  Suppose that the semi-Heyting reduct of $\mathbf{L}$ is a Heyting chain, and 
assume $|L| > n$, then there exist $b_1,b_2 \cdots, b_{n-1} \in L$ such that $0 < b_1 < \cdots, <
 b_{n-1} < 1$.  Since $\mathbf{L} \models$ (C$_n$), we can assign $\langle b_{n-1}, b_{n-2}, \cdots, b_1, 0 \rangle$ for $\langle x_1, x_2, \cdots, x_{n-1}, x_n \rangle$.  Then, $b_{n-1}  \lor (b_{n-1} \to b_{n-2}) \lor  \cdots, \lor (b_1 \to 0) = 1$, yielding 
 \[b_{n-1} \lor b_{n-2} \lor \cdots \lor b_1 \lor 0 = 1, \] 
implying that 
$b_{n-1} = 1$, which is a contradiction. Thus we have $|L| \leq n$, and we conclude that 
$\mathbf{V} = \mathbf{V(\mathbf{C_n^{dp}})}$.  Next, suppose $\mathbf{L} \cong \mathbf{D_2}$; then 
clearly $\mathbf{V} \cong \mathbf{V(D_2)}$, completing the proof.
\end{proof}

\begin{Corollary}\label{cor_6_5}
The variety $\mathbf {V(C_n^{dp})}$ is defined, modulo $\mathbf {JIDL_1}$, by 
\begin{enumerate}
\item[{\rm(1)}] $x \lor x' \approx 1$,
\item[{\rm(2)}] $x_1  \lor  x_2  \lor \cdots \lor x_n  \lor  (x_1 \to x_2)  \lor (x_2 \to x_3) \lor \cdots 
\lor (x_{n-1} \to x_n)=1.$
\end{enumerate}
\end{Corollary}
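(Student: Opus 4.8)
The plan is to reduce the statement to the two preceding corollaries and then read off the answer from the simple algebras. Writing $\mathbf{V}$ for the subvariety of $\mathbf{JIDL_1}$ defined by (1) and (2), note first that $\mathbf{V}$ is exactly the intersection, inside $\mathbf{JIDL_1}$, of the subvariety defined by (1) alone and the subvariety defined by (2) alone. By Corollary \ref{cor_6_2} the former is $\mathbf{DStHC}$, and by Corollary \ref{cor_6_4} the latter is $\mathbf{V(C_n^{dp})} \lor \mathbf{V(D_2)}$, so that $\mathbf{V} = \mathbf{DStHC} \cap [\mathbf{V(C_n^{dp})} \lor \mathbf{V(D_2)}]$. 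Rather than compute this meet abstractly inside the subvariety lattice, I would identify $\mathbf{V}$ directly through its simple members, mirroring the argument already used for Corollary \ref{cor_6_4}.

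Since $\mathbf{V} \subseteq \mathbf{JID}$, Corollary \ref{Discr} shows that $\mathbf{V}$ is a discriminator variety and hence is generated by its simple members; it therefore suffices to determine which simple algebras satisfy both (1) and (C$_n$). By Theorem \ref{Th}, every nontrivial simple algebra of $\mathbf{JIDL_1}$ is either $\mathbf{D_2}$ or a $\mathbf{DStH}$-chain. The algebra $\mathbf{D_2}$ has a fixed point $a = a'$, so $a \lor a' = a \neq 1$ and $\mathbf{D_2}$ fails (1); thus $\mathbf{D_2} \notin \mathbf{V}$. Every $\mathbf{DStH}$-chain, on the other hand, satisfies (1), because $a' = 1$ for each $a \neq 1$; and the cardinality-bounding computation carried out in the proof of Corollary \ref{cor_6_4} shows that such a chain validates (C$_n$) exactly when it has at most $n$ elements.

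Putting these together, the simple members of $\mathbf{V}$ are precisely the $\mathbf{DStH}$-chains $\mathbf{C_k^{dp}}$ with $k \leq n$. Each such chain embeds as a subalgebra of $\mathbf{C_n^{dp}}$ and so lies in $\mathbf{V(C_n^{dp})}$, while $\mathbf{C_n^{dp}}$ itself clearly satisfies (1) and (C$_n$) and hence belongs to $\mathbf{V}$. Since $\mathbf{V}$ and $\mathbf{V(C_n^{dp})}$ are both generated by their simple members and these coincide, I conclude $\mathbf{V} = \mathbf{V(C_n^{dp})}$. The only step requiring care is the bookkeeping at the small boundary cases --- verifying that the two-element simple algebra is $\mathbf{C_2^{dp}}$ and that no simple algebra outside the family $\{\mathbf{C_k^{dp}} : k \leq n\}$ survives both identities --- but this is already subsumed in Theorem \ref{Th} together with the cardinality argument of Corollary \ref{cor_6_4}, so it is routine rather than the crux.
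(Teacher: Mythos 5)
Your proof is correct and follows essentially the route the paper intends: the paper states this corollary without proof as an immediate consequence of Corollaries \ref{cor_6_2} and \ref{cor_6_4}, and your argument fills in exactly that derivation, using the same machinery (Corollary \ref{Discr} to reduce to simple algebras, Theorem \ref{Th} to list them, the cardinality computation from Corollary \ref{cor_6_4}, and the embedding $\mathbf{C_k^{dp}} \leq \mathbf{C_n^{dp}}$). Your observation that $\mathbf{D_2}$ is eliminated by identity (1) since $a \lor a' = a \neq 1$ is precisely the point that separates this corollary from Corollary \ref{cor_6_4}.
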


For a different base for $\mathbf {V(C_n^{dp})}$, see \cite{Sa11}.  Regularity was studied in \cite{Sa11}, \cite{Sa14},  \cite{Sa14a} and \cite{Sa15}.  Here is another use of it. 

\begin{Corollary}
The variety $\mathbf {V(C_3^{dp})}\,\lor \, \mathbf {V(D_2)}$ is defined, modulo $\mathbf {JIDL_1}$, by 
\begin{enumerate}
\item[] $x \land x^+ \leq y \lor y^*$   {\rm (Regularity)}, {\rm where $x^+ :=x'{^*}'$.}
\end{enumerate}
It is also defined, modulo $\mathbf {JIDL_1}$, by 
\begin{enumerate}
\item[] $x \land x'  \leq y \lor y^*$.
\end{enumerate}
\end{Corollary}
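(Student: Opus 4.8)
The plan is to argue exactly as in the proof of Corollary \ref{cor_6_4}. Since $\mathbf{JIDL_1}$ is a discriminator variety (Corollary \ref{Discr}), so is every subvariety $\mathbf{V}$ of it, and $\mathbf{V}$ is generated by its simple members; by Theorem \ref{Th} each such simple member is isomorphic to $\mathbf{D_2}$ or is a $\mathbf{DStH}$-chain. Hence, for each of the two candidate identities, it suffices to determine which simple algebras of $\mathbf{JIDL_1}$ satisfy it (recalling $x^+ := x'{^*}'$), and then observe that the identity is satisfied precisely by those subvarieties whose simple members all lie in $\mathbf{V(C_3^{dp})} \lor \mathbf{V(D_2)}$.

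First I would verify that the generators $\mathbf{C_3^{dp}}$ and $\mathbf{D_2}$ satisfy both regularity identities, so that $\mathbf{V(C_3^{dp})} \lor \mathbf{V(D_2)}$ satisfies them. For $\mathbf{D_2}$ a direct computation from Figure 1 together with $a'=a$, $b'=b$ gives $x \land x^+ = 0$ for every $x$, while $y \lor y^* = 1$ for every $y$ since $\mathbf{D_2}$ is Boolean; thus both $x \land x^+ \leq y \lor y^*$ and $x \land x' \leq y \lor y^*$ hold. For $\mathbf{C_3^{dp}} = \{0 < m < 1\}$ I would use that a $\mathbf{DStH}$-chain satisfies $z'=1$ for $z \neq 1$ and $z'' = z'^*$ (Theorem \ref{DSt_JID}(a)); one then checks that $x \land x^+$ and $x \land x'$ each attain maximum value $m$, while $y \lor y^*$ attains minimum value $m$, so both identities hold.

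The converse is the heart of the matter: I would show that a $\mathbf{DStH}$-chain $\mathbf{C}$ satisfies regularity if and only if $|C| \leq 3$. The key computation is that in such a chain every $z \neq 1$ has $z'=1$ (since $\mathbf{C}$ is a $\mathbf{DStC}$-chain), whence for $x \neq 1$ we get $x^+ = x'{^*}' = (1^*)' = 0' = 1$, so that $x \land x^+ = x = x \land x'$, while $x \land x^+ = x \land x' = 0$ for $x = 1$; moreover $y^* = 0$ for $y \neq 0$, so $y \lor y^* = y$, whereas $y \lor y^* = 1$ for $y = 0$. Consequently, if $\mathbf{C}$ has at least four elements, choosing $0 < a < b < 1$ and assigning $x = b$, $y = a$ gives $x \land x^+ = b = x \land x'$ but $y \lor y^* = a < b$, so both regularity identities fail; and if $|C| \leq 3$ the displayed values force the maximum of the left side below the minimum of the right side, so both hold. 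Therefore any $\mathbf{V} \subseteq \mathbf{JIDL_1}$ satisfying either identity has all its simple members among $\mathbf{C_1^{dp}}, \mathbf{C_2^{dp}}, \mathbf{C_3^{dp}}, \mathbf{D_2}$, each of which lies in $\mathbf{V(C_3^{dp})} \lor \mathbf{V(D_2)}$; hence $\mathbf{V} \subseteq \mathbf{V(C_3^{dp})} \lor \mathbf{V(D_2)}$, and combined with the second paragraph this yields equality for both identities.

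The main obstacle is precisely this size bound for chains: the arithmetic of $x^+$ and $x^*$ in a $\mathbf{DStH}$-chain must be carried out carefully, leaning on $z'=1$ for non-top elements and on $x'' = x'^*$, and the universally quantified inequality must be correctly reduced to a single comparison between the coatom and the atom. Once that is done, the fact that both identities cut out the very same class of simple algebras, and so define the same variety, falls out automatically, since $x \land x^+$ and $x \land x'$ behave identically against $y \lor y^*$ on every relevant simple algebra.
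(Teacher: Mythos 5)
Your proof is correct, and it follows exactly the intended route: the paper states this corollary without proof, clearly expecting the reader to repeat the argument of Corollary \ref{cor_6_4} (discriminator property, reduction to simple algebras via Theorem \ref{Th}, then a size bound on the $\mathbf{DStH}$-chains satisfying the identity), which is precisely what you do. Your arithmetic in the chains ($x'=1$ for $x\neq 1$, hence $x\land x^{+}=x\land x'=x$, and $y\lor y^{*}=y$ for $y\neq 0$) and in $\mathbf{D_2}$ (where $y\lor y^{*}=1$) is accurate, so the details check out.
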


The variety $\mathbf{V(C_3^{dp})}$ is axiomatized in 
\cite{Sa11}.  Here is another axiomatization for it.
\begin{Corollary}
The variety $\mathbf{V(C_3^{dp})}$ is defined, modulo $\mathbf{JIDL_1}$, by
\begin{itemize}
\item[(1)] $x \land x^+\leq y \lor y^*$  {\rm (Regularity)},
\item[(2)] $x'=x^+$.
\end{itemize}
\end{Corollary}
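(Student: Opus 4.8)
The plan is to reduce everything to a comparison of \emph{simple} algebras. Since $\mathbf{JID}$ is a discriminator variety (Corollary \ref{Discr}) and every subvariety of a discriminator variety is again one, the subvariety $\mathbf{V}$ of $\mathbf{JIDL_1}$ defined by (1) and (2) is a discriminator variety; hence $\mathbf{V}$ is generated by its simple members and is completely determined by the class of simple algebras it contains. The first step is to invoke the preceding corollary, which says that, modulo $\mathbf{JIDL_1}$, the single identity (1) (Regularity) axiomatizes $\mathbf{V(C_3^{dp})} \lor \mathbf{V(D_2)}$. Thus $\mathbf{V}$ is exactly the subvariety of $\mathbf{V(C_3^{dp})} \lor \mathbf{V(D_2)}$ cut out by (2), and it suffices to decide which simple members of $\mathbf{V(C_3^{dp})} \lor \mathbf{V(D_2)}$ satisfy (2).

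By Theorem \ref{Th}, the nontrivial simple algebras of $\mathbf{JIDL_1}$ are, up to isomorphism, $\mathbf{D_2}$ and the $\mathbf{DStH}$-chains; those lying in $\mathbf{V(C_3^{dp})} \lor \mathbf{V(D_2)}$ are precisely $\mathbf{D_2}$, $\mathbf{C_2^{dp}}$ and $\mathbf{C_3^{dp}}$. Two short verifications then settle the matter. First, (2) holds in every $\mathbf{DStH}$-chain: using $x'{^*} = x''$ (Theorem \ref{DSt_JID}(a)) and $x''' = x'$ (Lemma \ref{2.2}(\ref{v})) I get $x^+ = x'{^*}' = x''' = x'$, so in particular $\mathbf{C_2^{dp}} \models (2)$ and $\mathbf{C_3^{dp}} \models (2)$. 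Second, (2) fails in $\mathbf{D_2}$: from the defining tables $a' = a$ and $a^* = b$, so $a^+ = (a^*)' = b' = b \neq a = a'$.

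Putting these together, the nontrivial simple members of $\mathbf{V}$ are exactly $\mathbf{C_2^{dp}}$ and $\mathbf{C_3^{dp}}$. Since $\mathbf{V(C_2^{dp})} < \mathbf{V(C_3^{dp})}$ (Theorem \ref{Subvarlat_DStHC}), these two algebras generate $\mathbf{V(C_3^{dp})}$; and as $\mathbf{V}$ is a discriminator variety it is generated by its simple members, giving $\mathbf{V} = \mathbf{V(C_3^{dp})}$. The main (and essentially only non-routine) point is the semisimplicity coming from the discriminator structure, which guarantees that a subvariety is pinned down by the simple algebras it contains, so that deleting the single ``anomalous'' simple algebra $\mathbf{D_2}$ via the De~Morgan-type identity (2) exactly collapses $\mathbf{V(C_3^{dp})} \lor \mathbf{V(D_2)}$ down to $\mathbf{V(C_3^{dp})}$; the remaining table computations are immediate.
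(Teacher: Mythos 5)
Your proposal is correct, and it is exactly the argument the paper intends: the paper states this corollary without an explicit proof, but its proofs of the neighboring Corollaries \ref{cor_6_2}--\ref{cor_6_4} follow precisely your pattern (use the discriminator property so that a subvariety of $\mathbf{JIDL_1}$ is determined by which simple algebras from Theorem \ref{Th} it contains, then check the proposed identities on $\mathbf{D_2}$ and the $\mathbf{DStH}$-chains). Your verifications --- $x^+ = x'''=x'$ in $\mathbf{DSt}$ via Theorem \ref{DSt_JID}(a), and $a^+ = b \neq a = a'$ in $\mathbf{D_2}$ --- are accurate, so the proof stands.
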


We now examine the Amalgamation Property for subvarities of the variety $\mathbf{DStHC}$.   
For this purpose, we need the following 
lemma whose proof is straightforward.  

\begin{comment}
\begin{Theorem} \label{GL}
Let $\mathbf{K}$ be an equational class of algebras satisfying the Congruence Extension Property {\rm(CEP)}, and let every subalgebra of each subdirectly irreducible algebra in $\mathbf{K}$ be subdirectly irreducible.  Then $\mathbf{K}$ satisfies the Amalgamation Property if and only if whenever $\mathbf{A}$, $\mathbf{B}$, $\mathbf{C}$ are subdirectly irreducible algebras in $\mathbf{K}$ with $\mathbf{A}$ a common subalgebra of $\mathbf{B}$ and $\mathbf{C}$, the amalgam 
\rm($\mathbf{A}$; $\mathbf{B}$, $\mathbf{C}$\rm) can be amalgamated in $\mathbf{K}$.
\end{Theorem}
\end{comment}
%The proof of the following lemma is straightforward.   
We use ``$\leq$''  to abbreviate ``is a subalgebra of'' in the next lemma.

\begin{comment}

\begin{Lemma} Let $m, n \in \mathbb{N}$.  Then
\begin{enumerate} 
\item[{\rm(1)}]  $\mathbf{C_m^{dp}}\, \leq \,\mathbf{C_n^{dp}}$, for $m \leq n$
\ \\
\item[{\rm(2)}]  $\mathbf{C_n^{dp}} \leq \mathbf{C_n^{dp}} \times \mathbf{D_2}$, up to isomorphism\\ 
\item[{\rm(3)}]  $\mathbf{D_2}  \leq \mathbf{C_n^{dp}} \times \mathbf{D_2}$, up to isomorphism.
\end{enumerate}
\end{Lemma}
\end{comment}

Recall from \cite{Sa11} that the proper, nontrivial subvarieties of $\mathbf{DStHC}$ are precisely the subvarieties  of the form 
$\mathbf{V(C_n^{dp})}$, for $n \in \mathbb{N}$.

\begin{Lemma} Let $m, n \in \mathbb{N}$.  Then
\begin{enumerate} 
\item[]  $\mathbf{C_m^{dp}}\, \leq \,\mathbf{C_n^{dp}}$, for $m \leq n$.
\end{enumerate}
\end{Lemma}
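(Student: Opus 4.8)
The plan is to exhibit $\mathbf{C_m^{dp}}$ directly as a subalgebra of $\mathbf{C_n^{dp}}$ by selecting a suitable $m$-element subset of its universe and checking closure under the operations. First I would record the concrete structure of an $r$-element $\mathbf{DStH}$-chain. Writing its universe as $c_0 < c_1 < \cdots < c_{r-1}$ with $c_0 = 0$ and $c_{r-1} = 1$, the Heyting reduct forces the G\"odel implication, so that $c_i \to c_j = 1$ when $c_i \leq c_j$ and $c_i \to c_j = c_j$ otherwise; moreover, as observed in the proof of Lemma \ref{DPCSHC_JID}, the dual pseudocomplement is determined by $1' = 0$ and $x' = 1$ for every $x \neq 1$. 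Since both $\to$ and $'$ are completely dictated by the chain order and the cardinality, the $r$-element $\mathbf{DStH}$-chain is unique up to isomorphism; hence it will suffice to produce \emph{any} $m$-element subalgebra of $\mathbf{C_n^{dp}}$ whose lattice reduct is a chain.

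Next I would take $S := \{c_0, c_1, \ldots, c_{m-2}, c_{n-1}\}$, an $m$-element subset of the universe of $\mathbf{C_n^{dp}}$ (it reduces to the whole universe when $m = n$) that contains both $0 = c_0$ and $1 = c_{n-1}$. Being a subset of a chain, $S$ is automatically closed under $\vee$ and $\wedge$. For closure under $'$, note that $x' \in \{0,1\} \subseteq S$ for every $x \in S$. For closure under $\to$, observe that for $x, y \in S$ the value $x \to y$ is either $1$ or $y$, and both lie in $S$. Thus $S$ is a subuniverse of $\mathbf{C_n^{dp}}$, and the induced subalgebra is a $\mathbf{DStH}$-chain with exactly $m$ elements, whence by the uniqueness noted above it is isomorphic to $\mathbf{C_m^{dp}}$. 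This yields $\mathbf{C_m^{dp}} \leq \mathbf{C_n^{dp}}$ for $2 \leq m \leq n$.

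I do not expect a genuine obstacle: the whole argument is a routine closure check, and the only point deserving a word of care is the claim that an $m$-element $\mathbf{DStH}$-chain is determined up to isomorphism, which holds precisely because $\to$ and $'$ are forced by the order, so the inclusion of $S$ is the desired embedding. The one degenerate case is $m = 1$, where $\mathbf{C_1^{dp}}$ is trivial; since any subalgebra of $\mathbf{C_n^{dp}}$ with $n \geq 2$ must contain the distinct constants $0$ and $1$, this case is vacuous and irrelevant to the subsequent amalgamation discussion, which concerns only the subdirectly irreducible chains with $n \geq 2$.
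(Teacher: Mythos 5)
Your proof is correct and is essentially the argument the paper has in mind: the paper omits the proof as ``straightforward,'' and the straightforward proof is exactly your closure check on a subchain containing $0$ and $1$, together with the observation that $\to$ and $'$ on a finite $\mathbf{DStH}$-chain are forced by the order (G\"odel implication, and $x'=1$ for $x\neq 1$), so any $m$-element subuniverse is a copy of $\mathbf{C_m^{dp}}$. Your caveat about $m=1$ --- that the trivial algebra cannot be a subalgebra of $\mathbf{C_n^{dp}}$ for $n\geq 2$ because the distinct constants $0$ and $1$ must both belong to any subuniverse --- is a genuine edge case that the paper's statement silently ignores, and restricting to $2\leq m\leq n$ as you do is the correct reading for the amalgamation application.
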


\begin{Corollary}
Every subvariety of $\mathbf{DStHC}$ has Amalgamation Property.
\end{Corollary}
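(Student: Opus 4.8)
The plan is to invoke the standard Gr\"{a}tzer--Lakser reduction of the amalgamation property to subdirectly irreducible algebras: for a variety with the congruence extension property in which every subalgebra of a subdirectly irreducible algebra is again subdirectly irreducible, it suffices to amalgamate, inside the variety, every triple $(\mathbf{A};\mathbf{B},\mathbf{C})$ of subdirectly irreducibles in which $\mathbf{A}$ is a common subalgebra of $\mathbf{B}$ and $\mathbf{C}$. First I would verify these hypotheses. Since $\mathbf{DStHC}\subseteq\mathbf{DSt}$ and $\mathbf{DSt}$ is a discriminator variety by Corollary \ref{DSt_DISCR}, both $\mathbf{DStHC}$ and each of its subvarieties are discriminator varieties and hence enjoy the congruence extension property. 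By Theorem \ref{Subvarlat_DStHC} together with Theorem \ref{Th}, the subdirectly irreducible members are exactly the $\mathbf{DStH}$-chains $\mathbf{C_m^{dp}}$, and any subalgebra of such a chain is a subchain closed under the operations, hence again a $\mathbf{DStH}$-chain and so subdirectly irreducible. Thus the reduction applies and the problem becomes: given embeddings $\mathbf{C_i^{dp}}\hookrightarrow\mathbf{C_j^{dp}}$ and $\mathbf{C_i^{dp}}\hookrightarrow\mathbf{C_k^{dp}}$, amalgamate them inside the relevant variety.

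For the full variety $\mathbf{DStHC}$ this is straightforward: I would glue the chains $\mathbf{C_j^{dp}}$ and $\mathbf{C_k^{dp}}$ along their common subchain (the image of $\mathbf{C_i^{dp}}$), ordering the elements so that the result is again a finite chain $\mathbf{D}$. Equipping $\mathbf{D}$ with the G\"{o}del implication and the dual Stone negation ($x'=1$ for $x\neq 1$) makes it a $\mathbf{DStH}$-chain, and every finite $\mathbf{DStH}$-chain lies in $\mathbf{DStHC}$. The two inclusions into $\mathbf{D}$ agree on $\mathbf{C_i^{dp}}$ by construction, so $\mathbf{DStHC}$ itself has the amalgamation property.

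For a proper subvariety $\mathbf{V(C_n^{dp})}$ the same construction must be carried out under the extra constraint that the amalgam lie in $\mathbf{V(C_n^{dp})}$, i.e. that it embed into a product of chains of length at most $n$; here the preceding lemma, $\mathbf{C_m^{dp}}\leq\mathbf{C_{m'}^{dp}}$ for $m\leq m'$, is the natural tool for placing both $\mathbf{C_j^{dp}}$ and $\mathbf{C_k^{dp}}$ into a common chain of length at most $n$. I expect this last step to be the main obstacle, and the point that most requires care: the embeddings $\mathbf{C_i^{dp}}\hookrightarrow\mathbf{C_j^{dp}}$ and $\mathbf{C_i^{dp}}\hookrightarrow\mathbf{C_k^{dp}}$ need not be unique, and when the image of $\mathbf{C_i^{dp}}$ occupies different ``gaps'' in $\mathbf{C_j^{dp}}$ and in $\mathbf{C_k^{dp}}$, the naive glued chain can have length exceeding $n$, while the chains are simple (so coordinate maps into a product are forced to be embeddings rather than collapses). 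Establishing that a compatible pair of embeddings agreeing on $\mathbf{A}$ can nonetheless always be realized inside $\mathbf{V(C_n^{dp})}$---rather than the routine verification of the Gr\"{a}tzer--Lakser hypotheses---is where the substance of the argument lies, and it is the step I would scrutinize first.
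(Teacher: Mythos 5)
Your proposal follows exactly the paper's strategy: the Gr\"{a}tzer--Lakser reduction to amalgams of subdirectly irreducibles (justified via CEP from the discriminator property, Corollary \ref{DSt_DISCR}, and the fact that subalgebras of the simple chains are again simple chains), and, for the full variety $\mathbf{DStHC}$, gluing the two chains along the common subchain. That part of your argument is complete and agrees with the paper. Where you differ is that the paper disposes of the proper subvarieties $\mathbf{V(C_n^{dp})}$ with the single assertion that, ``in view of the preceding lemma, it is clear'' that the amalgam can be amalgamated, whereas you stop and flag precisely this step as the one carrying the real content. So, judged as a proof, your proposal has a gap: the case $\mathbf{V}=\mathbf{V(C_n^{dp})}$ is never carried out.

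However, your refusal to call that step routine is vindicated: the step is not just nontrivial, it is false, so the corollary as stated fails for $n\geq 4$ and the paper's proof is in error there. Concretely, work in $\mathbf{V(C_4^{dp})}$. Let $\mathbf{A}=\mathbf{C_3^{dp}}=\{0<a<1\}$ and $\mathbf{B}=\mathbf{C}=\mathbf{C_4^{dp}}$, and embed $\mathbf{A}$ into $\mathbf{B}$ by sending $a$ to the atom of $\mathbf{B}$, and into $\mathbf{C}$ by sending $a$ to the coatom of $\mathbf{C}$ (both are $\mathbf{DStH}$-embeddings, since on a G\"{o}del chain every subset containing $0,1$ is a subalgebra). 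Suppose $f:\mathbf{B}\hookrightarrow\mathbf{D}$ and $g:\mathbf{C}\hookrightarrow\mathbf{D}$ amalgamate this pair with $\mathbf{D}\in\mathbf{V(C_4^{dp})}$. Writing $\mathbf{D}$ as a subdirect product of its subdirectly irreducible quotients $\mathbf{D_i}$, J\'{o}nsson's lemma \cite{Jn67} gives that each $\mathbf{D_i}$ is a chain with at most $4$ elements. Since $\mathbf{B}$ is simple and $0\neq 1$ in $\mathbf{D_i}$, each composite $\pi_i\circ f:\mathbf{B}\to\mathbf{D_i}$ is injective, hence an isomorphism of $4$-chains, and so maps the atom of $\mathbf{B}$ to the atom of $\mathbf{D_i}$; likewise $\pi_i\circ g$ maps the coatom of $\mathbf{C}$ to the coatom of $\mathbf{D_i}$. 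Compatibility over $\mathbf{A}$ then forces the atom and the coatom of a $4$-element chain to coincide, a contradiction; the same argument works for every finite $n\geq 4$. This is exactly the obstruction you predicted: the lemma $\mathbf{C_m^{dp}}\leq\mathbf{C_n^{dp}}$ produces \emph{some} embeddings into a common chain but cannot produce \emph{compatible} ones, and simplicity blocks any escape into products, since coordinate maps cannot collapse anything. The corollary survives only for $\mathbf{V(C_1^{dp})}$, $\mathbf{V(C_2^{dp})}$, $\mathbf{V(C_3^{dp})}$, and $\mathbf{DStHC}$ itself (in harmony with Maksimova's classification, by which the $n$-valued G\"{o}del logics lack interpolation for $n\geq 4$); a correct write-up should prove AP for these and record the counterexample for the rest.
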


\begin{proof} 

It follows from Corollary \ref{DSt_DISCR}  
that $\mathbf{DStHC}$ is a discriminator variety; and hence has CEP.  Also, from Theorem \ref{Th} we obtain that every subalgebra of each subdirectly irreducible (= simple) algebra in $\mathbf{DStHC}$ is subdirectly irreducible.  Let $\mathbf{V}$ be a subvariety of $\mathbf{DStHC}$. Then,using a result from \cite{GrLa71} that we need only consider an amalgam ($\mathbf{A}$: $\mathbf{B}$, $\mathbf{C}$), where $\mathbf{A}$, $\mathbf{B}$, $\mathbf{C}$  are simple in $\mathbf{V}$ and $\mathbf{A}$ a subalgebra of $\mathbf{B}$ and $\mathbf{C}$.  First, suppose $\mathbf{V} =\mathbf{V(C_n^{dp})}$ for some $n$.  
Then $\mathbf{B}$ and $\mathbf{C}$ are $\mathbf{DStHC}$-chains.  Then, in view of the preceding lemma, it is clear that the amalgam ($\mathbf{A}$: $\mathbf{B}$, $\mathbf{C}$) can be amalgamated in $\mathbf{V}$.   
Next, suppose  $\mathbf{V} = \mathbf{DStHC}$,    
then it is clear that the amalgamation can be achieved as in the previous case. 
\end{proof}

We conclude this section with the following remark: Since every subvariety $\mathbf{V}$ of $\mathbf{DStHC}$ has Congruence Extension Property and Amalgamation Property, it follows from Banachewski \cite{Ba70} that 
all subvarieties of $\mathbf{DStHC} $ have enough injectives (see \cite{Ba70} for the definition of this notion).\\

\vspace{1cm}

\small

\ \\
\ \ \ \ \ \ \ \ \\
Department of Mathematics\\
State University of New York\\
New Paltz, NY 12561\\
\
\\
sankapph@newpaltz.edu\\
\ \

\end{document}